\documentclass[12pt]{amsart}
\usepackage{amsfonts,amsmath,amssymb,amsthm}
\allowdisplaybreaks[1]
\usepackage{tikz-cd}
\usepackage{tikz-3dplot}
\usepackage{subcaption}
\usepackage{extarrows}
\usepackage{enumerate}
\usepackage[a4paper, margin=1in]{geometry}
\usepackage{hyperref} 
\usepackage{cleveref} 
\usepackage{bbm}
\usepackage{relsize}

\newtheorem{theorem}{Theorem}[section]
\newtheorem{lemma}[theorem]{Lemma}
\newtheorem{proposition}[theorem]{Proposition}
\newtheorem{corollary}[theorem]{Corollary}

\theoremstyle{definition}
\newtheorem{definition}[theorem]{Definition}

\newtheorem{example}[theorem]{Example}

\newtheorem{conjecture}[theorem]{Conjecture}

\theoremstyle{remark}
\newtheorem{remark}[theorem]{Remark}

\hypersetup{
    colorlinks=true,
    linkcolor=black,
    filecolor=magenta,      
    urlcolor=cyan,
    pdftitle={Your Document Title},
    bookmarks=true
}

\begin{document}
\title{Hilbert-Kunz multiplicity of quadrics decreases}
\author{Cheng Meng}
\address{Cheng Meng\\ Yau Mathematical Sciences Center, Tsinghua University, Beijing 100084, China. \emph{Email:} {\rm cheng319000@mail.tsinghua.edu.cn}}

\subjclass[2020]{13A35, 13D40, 20C20}
\date{\today}
\begin{abstract}
In this paper we prove that the Green ring of $\mathbb{Z}/p^e\mathbb{Z}$ is the $e$-fold tensor product of the Green ring of $\mathbb{Z}/p\mathbb{Z}$, and this isomorphism is given by $p$-adic expansions of integers. As an application of this isomorphism, we compute the Hilbert-Kunz function and Hilbert-Kunz multiplicity of Fermat quadrics. Then we use Gelfand transform of the Green ring to give an analytical expression of this Hilbert-Kunz multiplicity, and prove that it decreases with its characteristic, thus giving a positive answer to a conjecture of Yoshida.

\end{abstract}
\maketitle
\tableofcontents
\section{Introduction}
\subsection{The Hilbert-Kunz multiplicity}
Let $(R,\mathfrak{m})$ be a Noetherian local ring of characteristic $p$. The Hilbert-Kunz multiplicity $e_{HK}(R)$ is an important invariant for $R$.
\begin{definition}[\cite{Mon83}]
$$e_{HK}(R)=\lim_{e \to \infty}\frac{\ell(R/\mathfrak{m}^{[p^e]})}{p^{e\dim R}}.$$    
\end{definition}
Here $\mathfrak{m}^{[p^e]}=(f^{p^e}|f \in \mathfrak{m})$ is the $e$-th Frobenius power of $\mathfrak{m}$. By the result of \cite{Mon83}, $e_{HK}(R)$ always exists as a positive real number. In general, it is a real number in $[1,\infty)$. By \cite[Theorem 1.5]{WYeHK=1}, for a formally unmixed local ring $R$, $e_{HK}(R)=1$ if and only if $R$ is regular local. So roughly speaking, $e_{HK}(R)$ measures the singularity of the ring $R$.

Concrete computation of the Hilbert-Kunz multiplicity is very hard. In general, the Hilbert-Kunz multiplicity may not be rational by a well-known unpublished result of Brenner \cite{Bre13}. Up to now, there is no reliable algorithm to compute $e_{HK}(R)$ for a ring $R$ with a general explicit representation, and the values of these invariants are only known for very special classes of rings.  For example, the Hilbert-Kunz multiplicity of $\mathbb{F}_2[[x,y,z,u,v]]/(x^3+y^3+xyz+uv)$ is only conjecturally known in \cite{Monsky08conj}.

\subsection{Conjectures by Watanabe and Yoshida}
A natural question is how close the Hilbert–Kunz multiplicity of a singularity can be to 1. It turns out that such Hilbert-Kunz multiplicity cannot be arbitrarily close when $d=\dim R$ is fixed; for example, when $e_{HK}(R)<1+\frac{1}{d(d!(d-1)+1)^d}$, $R$ is regular by \cite[Theorem 4.12]{AE08}, so $e_{HK}(R)=1$. A better result in \cite[Chapter 8]{Huneke} shows that if $e_{HK}(R)<1+\frac{1}{d^dd!}$, then $R$ is regular. So it arouses one's interest to find an exact description of the minimum or infimum for $e_{HK}(R)$ of singular rings $R$ depending on both $p,d$ or solely on $d$.
In 2005, Watanabe and Yoshida made the following conjecture which predicts the ring achieving this minimal Hilbert-Kunz multiplicity, and the behavior of this multiplicity with respect to the characteristic:
\begin{conjecture}[\cite{WYconj05}, Conjecture 4.2]
Let $p$ be an odd prime, $Q_{p,d}=\mathbb{F}_p[[x_1,\ldots,x_d]]/(x_1^2+\ldots+x_d^2)$\footnote{note that here the index $d$ is shifted by 1 compared with other references like \cite{WYconj05} and \cite{PSSY}. The term $A_{p,d}$ in these references refers to $Q_{p,d+1}$.}, which is a singular ring of characteristic $p$ and dimension $d-1$. Then:
\begin{enumerate}
\item for any formally unmixed non-regular local ring $R$ of characteristic $p$ and dimension $d$, $e_{HK}(R)\geq e_{HK}(Q_{p,d+1})$.
\item $e_{HK}(Q_{p,d})\geq \lim_{p \to \infty}e_{HK}(Q_{p,d})$.
\item for any formally unmixed non-regular local ring $R$ of characteristic $p$ and dimension $d$, $e_{HK}(R)=e_{HK}(Q_{p,d+1})$ if and only if $R$ and $Q_{p,d+1}$ are isomorphic up to completion and change of base field.
\end{enumerate}    
\end{conjecture}
Later a stronger form of (2) has been conjectured in an unpublished note of Yoshida \cite{Yos19}:
\begin{enumerate}
\item [(4)] $p \mapsto e_{HK}(Q_{p,d})$ is decreasing in $p$.
\end{enumerate}
If the above conjectures are true, then $e_{HK}(Q_{p,d+1})$ is a lower bound in fixed characteristic and dimension, and $\lim_{p \to \infty}e_{HK}(Q_{p,d+1})$ would be a common lower bound for $e_{HK}(R)$ for all characteristics in fixed dimension, which is a great improvement from the known bound $1+\frac{1}{d^dd!}$.

We list the most recent progress on these conjectures.
\begin{enumerate}
\item when $R$ is a complete intersection ring \cite{ES05} \cite{CR25}, when $\dim(R)=3$\cite{WY01}, $4$\cite[Theorem 4.3]{WYconj05}, $5,6$\cite[Theorem 5.2]{AE13}, $7$ \cite[Theorem 1.2]{AC24}.
\item when $p>d-3$ \cite[Theorem A]{TriWYinequ}, and for all $p$, \cite{Meng89}. A refinement of (2) for $p=2$ is proved in \cite{CR25}. An alternative proof is given by \cite{KAH} using results from \cite{PSSY}.
\item when $R$ is a complete intersection \cite{CR25}, when $2 \leq \dim R \leq 4$ \cite{WYconj05}.
\item When $p\gg 0$ \cite[Theorem C]{TriWYinequ}, when $d \leq 31$ \cite{PSSY}.
\end{enumerate}

\subsection{Results}
Here is the main theorem of this paper.
\begin{theorem}[See \Cref{5.1}]
Conjecture (4) holds. That is, $p \mapsto e_{HK}(Q_{p,d})$ is decreasing.   
\end{theorem}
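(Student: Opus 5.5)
The plan is to follow the route announced in the abstract. The first step is to express $e_{HK}(Q_{p,d})$ through the representation theory of cyclic $p$-groups. Write $q=p^e$. Since $x_i^{q}$ acts on $k[x_i]/(x_i^q)$ as a Jordan block, the quotient $k[x_1,\dots,x_d]/(x_1^q,\dots,x_d^q)$ becomes a module over $k[T]/(T^{q})$ (the Green ring of $\mathbb{Z}/q\mathbb{Z}$), where $T$ acts as multiplication by the quadric $f=\sum x_i^2$. On each tensor factor, multiplication by $x_i^2$ decomposes $k[x_i]/(x_i^q)$ into two Jordan blocks, of sizes $(q+1)/2$ and $(q-1)/2$. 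The length we need is
$$\ell\bigl(k[x]/(x^{[q]},f)\bigr)=\dim \operatorname{coker}(f),$$
which equals the number of indecomposable summands of the tensor product $V_{q,d}=(V_{(q+1)/2}\oplus V_{(q-1)/2})^{\otimes d}$ in the Green ring of $\mathbb{Z}/q$.

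The second step is to compute this number using the structural theorem. I will use the isomorphism of the Green ring of $\mathbb{Z}/p^e$ with the $e$-fold tensor power of the Green ring of $\mathbb{Z}/p$, which sends $V_n$ to a tensor of $V_{n_i+1}$'s via the $p$-adic digits of $n-1$. The $p$-adic expansions of $(q\pm1)/2$ are very regular: the digits of $(q-1)/2$ are all $(p-1)/2$. Hence $V_{(q+1)/2}\oplus V_{(q-1)/2}$ corresponds to an explicit element, essentially a product of copies of a fixed element $\alpha=V_{(p+1)/2}\oplus\cdots$ of the Green ring of $\mathbb{Z}/p$, with a small correction in the last digit. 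Counting summands is then a ring homomorphism-like functional, namely dimension minus rank, i.e. the number of Jordan blocks. This turns $\ell(R/\mathfrak m^{[q]})$ into an $e$-fold iteration of a transfer matrix $M_p$ acting on a vector indexed by the indecomposables $V_1,\dots,V_p$. Dividing by $q^{d-1}$ and letting $e\to\infty$ gives $e_{HK}(Q_{p,d})$ as a Perron–Frobenius-type limit. These are the Hilbert–Kunz function and multiplicity mentioned in the abstract.

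The third step is to apply the Gelfand transform to obtain an analytic formula. The Green ring of $\mathbb{Z}/p$, tensored with $\mathbb{C}$, is semisimple, and its characters are given by $V_n\mapsto \sin(n\theta_j)/\sin\theta_j$ with $\theta_j=j\pi/p$, plus a degenerate character at the Steinberg module $V_p$. Evaluating $\alpha^d$ under these characters should produce a formula of the shape
$$e_{HK}(Q_{p,d})=1+\sum_{j}c_j\Bigl(\tfrac{\cos(\cdot)}{\cos(\cdot)}\Bigr)^{d}\ \text{(a trigonometric sum at } \theta=j\pi/p,\ \text{possibly} \ \tfrac{1}{p}\sum_j \sec^{d}(j\pi/p)\text{-like terms)},$$
that is, a Riemann sum in $j/p$ converging to the known limit integral $\int$ of $(\tan$ or $\sec$)-type kernels as $p\to\infty$.

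The main obstacle is the final monotonicity step. The goal is to show that this Riemann-type sum $S_d(p)$ decreases in $p$ for each fixed $d$. I would first try to write $S_d(p)$ as an integral against a kernel that depends on $p$ in a monotone way, for instance via the partial fraction expansion $\sum_{k\in\mathbb{Z}}(x+k p)^{-d}$ or the Mittag-Leffler expansion of $\sec$. The aim is to exhibit each $e_{HK}(Q_{p,d})$ as the limit value plus a sum of positive terms that are individually decreasing in $p$. If that fails, the fallback is to compare consecutive odd primes by interlacing the sample points $j\pi/p$ and using convexity of the summand in $\theta$, a midpoint-rule error argument. Handling the two parities of $d$, and the degenerate Steinberg term separately, is where I expect the delicate work to lie.
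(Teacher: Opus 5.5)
There is a genuine gap, and it starts in your second step. The map $V_n\mapsto V_{n_{e-1}+1}\otimes\cdots\otimes V_{n_0+1}$ (with $n_i$ the digits of $n-1$) is not a ring isomorphism. For $p=3$, $e=2$ one has $V_4\otimes V_2=V_3\oplus V_5$ for $\mathbb{Z}/9$. Under your map this goes to $V_1\otimes V_3+V_2\otimes V_2$, while $(V_2\otimes V_1)(V_1\otimes V_2)=V_2\otimes V_2$.

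The correct isomorphism is defined on the basis $\lambda_a=(-1)^a(\delta_{a+1}-\delta_a)$, with each digit reflected $i\mapsto p-1-i$ according to the parity of the higher digits (this is what $\theta$ encodes). Consequently the image of $\frac12(\delta_{(q-1)/2}+\delta_{(q+1)/2})$ is \emph{not} essentially a pure tensor. It is $w_e=\delta_r\otimes\Delta_{e-1}+\lambda_r\otimes w_{e-1}$ (with $-\lambda_r$ and a reflected $w_{e-1}^*$ when $r=\frac{p-1}{2}$ is odd). The absorption $w\,\Delta_{e-1}=\tilde\ell(w)\Delta_{e-1}$ by the projective $\Delta_{e-1}=\delta_p^{\otimes(e-1)}$ then turns $\mu(w_e^d)$ into a geometric series in $\pm\mu(\lambda_r^d)/p^{d-1}$.

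Your pure-tensor picture already fails for $d=2$: it predicts $\ell(R/\mathfrak m^{[q]})=(\frac{p+1}{2})^{e-1}(2p-1)$, whereas the true value is $2q-1$. The correct computation gives $b_{p,d}=1+\frac{\mu((\delta_r+\delta_{r+1})^d)-p^{d-1}}{p^{d-1}-(-1)^{rd}\mu((\delta_{r+1}-\delta_r)^d)}$. You also omit the next step: $\mu$ is not multiplicative, so it must be written in the species as $\mu=\frac1p\bigl(\sum_{i<p}(1+\cos\frac{i\pi}{p})s_i+s_p\bigr)$. Doing so gives, for even $d$, $b_{p,d}-1=\frac{2S_{p,d}-2S_{p,d-2}}{p^d-2S_{p,d-2}-1}$ with $S_{p,d}=\sum_{i<p,\ i\ \text{odd}}\sin(\frac{i\pi}{2p})^{-d}$. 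For odd $d$ one uses the signed sums $T_{p,d}$ and $(-1)^r$. So the object to study is a \emph{ratio} of trigonometric sums, not ``$1+$ a single sum''.

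The second, more serious gap is that the monotonicity step, which is the actual content of the theorem, is not argued at all; you only list strategies you might try. The target you set is also the wrong one. $S_{p,d}$ is not a Riemann sum: for $d\ge 2$ it is dominated by the terms nearest the pole, and $p^{-d}S_{p,d}\to(\frac{2}{\pi})^d\lambda(d)$ is a lattice sum, not an integral. It is increasing in $p$. What must decrease is a ratio whose numerator and denominator both move with $p$, and whose dominant behaviour depends on the relative size of $p$ and $d$. Exact fixed-$d$ formulas, which a residue or partial-fraction computation yields as rational functions of $p$ (already known), do not settle all $d$ at once. A convexity or interlacing comparison does not control such a ratio.

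The paper needs estimates uniform in $d$, split into two regimes.
\begin{itemize}
\item For $p\le 2^{d/4}$, every sum is replaced by its dominant term $\sin(\frac{\pi}{2p})^{-d}$, with error at most $p\sin(\frac{3\pi}{2p})^{-d}$. The resulting approximant $F(\alpha)$ is shown to be increasing in $\alpha=\frac{\pi}{2p}$ with an explicit relative gap $\epsilon$. An approximation lemma for fractions then transfers the strict inequality.
\item For $p\ge 2^{d/4}$, $S_{p,d}$ (resp.\ $T_{p,d}$) is expanded as $\sum_{i\le d-2}\rho_{d,i}(\frac{\pi}{2})^{i-d}\lambda(d-i)p^{d-i}$ (resp.\ with $\beta$), with errors $O(p^2)+O(p(\frac d4)^{d/2})$. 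This uses positivity of the Taylor coefficients of $(z/\sin z)^d$ and the bound $\rho_{d,2i}\le(\frac d4)^i$. One then shows the normalized numerator decreases and the denominator increases.
\end{itemize}
The remaining cases, $d\le 15$ and $p=3$ versus $p=5$, are checked from explicit rational functions and direct numerics. Without an argument of this kind, your proposal does not prove the theorem.
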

We will focus on the case $p>2$. For the analogous statement for $p=2$ we need to replace $Q_{2,d}$ with another ring $Q'_{2,d}$. In this case, $e_{HK}(Q'_{2,d})\geq e_{HK}(Q_{3,d})$ by \cite{CR25}.

A sketch of proof is as follows. Along with the proof, we will also present some other results of the paper. We denote $e_{HK}(Q_{p,d})=b_{p,d}$ in the following theorem.

First, it is already clear from Han and Monsky's method in \cite{HM93} that such Hilbert-Kunz multiplicity can be computed using the representation ring $\Gamma$ of $\mathbbm{k}$-objects, or on its subrings $\Gamma_e$. However, the computation of the product depends heavily on combinatorics. We show that there is an isomorphism between $\Gamma_e$ and $e$-th tensor power of $\Gamma_1$:
\begin{theorem}[See \Cref{2.8}]
We have an isomorphism of rings
$$\varphi_e:\Gamma_e \to \Gamma_1\otimes_\mathbb{Z}\Gamma_1\otimes_\mathbb{Z}\ldots \otimes_\mathbb{Z}\Gamma_1=\Gamma_1^{\otimes e}$$
such that
$$\varphi_e(\lambda_a)=\lambda_{b_{e-1}}\otimes \lambda_{b_{e-2}}\otimes \ldots \otimes \lambda_{b_0}$$
whenever
$$\lambda_a=\theta^{e-1}(\lambda_{b_{e-1}})\theta^{e-2}(\lambda_{b_{e-2}})\ldots \theta(\lambda_{b_1})\lambda_{b_0}.$$
Here $\lambda_i=(-1)^i(\delta_{i+1}-\delta_i)$, $\delta_i$ is the unique $i$-dimensional indecomposable $\mathbbm{k}$-object, and $\theta$ is the map
$$\lambda_i \mapsto \begin{cases}
\lambda_{pi} & i \textup{ even}\\
\lambda_{pi+p-1} & i \textup{ odd.}
\end{cases}$$
\end{theorem}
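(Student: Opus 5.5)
Write $q=p^e$. Here $\Gamma_e$ is the representation ring of $\mathbbm{k}$-objects of dimension at most $q$ (equivalently the Green ring of $\mathbb{Z}/q\mathbb{Z}$), so it is free over $\mathbb{Z}$ on $\delta_1,\dots,\delta_{q}$. Equivalently it is free on $\lambda_0,\dots,\lambda_{q-1}$, since $\delta_{i+1}=\sum_{j\le i}(\pm)\lambda_j$ is a unitriangular change of basis. The plan has three steps.

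\textbf{Step 1: the map $\theta$.} I will show that $\theta:\Gamma_1\to\Gamma_e$ (and $\theta:\Gamma_{k}\to\Gamma_{k+1}$) is a ring homomorphism. The conceptual input is that it is induced by a functor. Taking $\delta_i\mapsto\delta_{pi}$ corresponds to restriction/inflation along $\mathbb{Z}/p^{k+1}\to\mathbb{Z}/p^k$ composed with tensoring by $\delta_p$, or dually to inflation from the quotient $T\mapsto T^p$ (Frobenius pullback).

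The signs and the shift $pi+p-1$ for odd $i$ come from rewriting in the $\lambda$-basis. We have $\theta(\lambda_i)=(-1)^i(\delta_{p(i+1)}-\delta_{pi})$ up to a $\delta_p$-related correction. The $\lambda$-normalization is exactly designed so that $\theta$ is multiplicative. I will verify multiplicativity on generators using the known Clebsch–Gordan-type product formulas for $\delta_i\delta_j$ in $\Gamma$ (Han–Monsky / Renaud). In the $\lambda$-basis these formulas take the form $\lambda_i\lambda_j=\sum\lambda_k$ over a $p$-adic "carry-free" range.

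\textbf{Step 2: the $\lambda$-basis factors through $p$-adic digits.} Let $a=\sum b_k p^k$ with $0\le b_k<p$. I will show by induction on $e$ that
$$\theta^{e-1}(\lambda_{b_{e-1}})\cdots\theta(\lambda_{b_1})\lambda_{b_0}=\lambda_a,$$
and that this product decomposition is unique. The key computation is $\theta^{k}(\lambda_c)\lambda_{b}=\lambda_{p^k c'+b}$ for $b<p^k$, with $c'$ determined by the parity twist. In the $\delta$-language this is the classical fact that $\delta_{p^k m}\otimes\delta_b$ decomposes into $\delta_{p^k m\pm b}$-type pieces. Here I will use the product formula and the sign $(-1)^i$ to cancel everything except one basis element. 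The parity rule in $\theta$ exactly matches whether $a$'s higher part is odd, because multiplying by an odd-index $\lambda$ reverses the order of the block of length $p^k$.

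Since $a\mapsto(b_{e-1},\dots,b_0)$ is a bijection $\{0,\dots,p^e-1\}\to\{0,\dots,p-1\}^e$, the map $\varphi_e$ sends a $\mathbb{Z}$-basis to a $\mathbb{Z}$-basis of $\Gamma_1^{\otimes e}$. Hence it is an additive isomorphism.

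\textbf{Step 3: multiplicativity.} Define $\psi:\Gamma_1^{\otimes e}\to\Gamma_e$ by $x_{e-1}\otimes\cdots\otimes x_0\mapsto\theta^{e-1}(x_{e-1})\cdots x_0$. Because each $\theta^k$ is a ring map and $\Gamma_e$ is commutative, $\psi$ is a ring homomorphism. By Step 2, $\psi$ sends the basis $\lambda_{b_{e-1}}\otimes\cdots\otimes\lambda_{b_0}$ to $\lambda_a$, so $\psi$ is bijective and $\varphi_e=\psi^{-1}$ is a ring isomorphism.

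\textbf{Main obstacle.} I expect the main obstacle to be Step 2, together with the well-definedness and multiplicativity of $\theta$ on $\Gamma_1$. Specifically, I must show that $\theta(\lambda_i)\theta(\lambda_j)=\theta(\lambda_i\lambda_j)$ and that the mixed products collapse to single basis elements. I would attack this by first deriving a closed formula for $\lambda_i\lambda_j$ in $\Gamma_e$ from the Han–Monsky product rule. I would then check the identities by a parity case analysis on $i,j$, with the boundary terms near multiples of $p^k$ as the delicate part.
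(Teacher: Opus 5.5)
Your Steps 2--3 follow the paper's architecture: Proposition 2.7, plus the observation that $w_1\otimes\cdots\otimes w_e\mapsto\theta^{e-1}(w_1)\cdots w_e$ is multiplicative and carries a basis to a basis. The gap is Step 1. The one substantive input, that $\theta$ is a ring homomorphism, is not established, and the mechanism you propose for it cannot work. The paper does not prove this; it imports it from \cite[Theorem 2.13]{TeixeiraII}.

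\begin{itemize}
\item Your claim that $\theta$ is induced by a functor is false. Since $\delta_2=\lambda_0-\lambda_1$, you get $\theta(\delta_2)=\lambda_0-\lambda_{2p-1}=\delta_1-\delta_{2p-1}+\delta_{2p}$. This is not the class of any module, so no additive module-valued functor induces $\theta$.
\item The map $\delta_i\mapsto\delta_{pi}$ you start from is induction from the index-$p$ subgroup. It is not multiplicative: it sends the unit $\delta_1$ to $\delta_p$. Frobenius pullback ($T$ acting as $T^p$) is multiplicative, but it shrinks Jordan blocks rather than enlarging them.
\item $\theta(\lambda_i)$ is not a small correction of $(-1)^i(\delta_{p(i+1)}-\delta_{pi})$. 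It equals $\delta_{pi+1}-\delta_{pi}$ for $i$ even and $\delta_{pi+p-1}-\delta_{pi+p}$ for $i$ odd.
\item Your fallback, checking multiplicativity on generators via product formulas, is left unspecified. For a map defined linearly on a basis you would need $\theta(\delta_2 y)=\theta(\delta_2)\theta(y)$ for every basis element $y$. Moreover, Step 3 needs each $\theta^k|_{\Gamma_1}$ to be multiplicative. That involves products such as $\lambda_{p^k i}\lambda_{p^k j}$ with both indices $\ge p$, which Propositions 2.3 and 2.4 do not cover.
\end{itemize}
You must either cite Teixeira's theorem or give a genuine proof of it.

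Step 2 is also misstated: the factors are not the $p$-adic digits of $a$. For $e=2$ and $a=p$, you get $\theta(\lambda_1)\lambda_0=\lambda_{2p-1}\neq\lambda_p$; in fact $\lambda_p=\theta(\lambda_1)\lambda_{p-1}$. The twist in your key identity also sits on the wrong factor. Proposition 2.4 gives $\theta(\lambda_i)\lambda_j=\lambda_{pi+j^{\sigma(i)}}$ for $0\le i,j<p$: the upper digit is kept, and the lower digit is reflected $j\mapsto p-1-j$ when $i$ is odd.

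The paper iterates this using multiplicativity of $\theta$, in the form $\theta^{k+1}(x)\theta^{k}(y)=\theta^{k}(\theta(x)y)$. This gives $\lambda_a=\theta^{e-1}(\lambda_{a_{e-1}})\theta^{e-2}(\lambda_{a_{e-2}^{\sigma(a_{e-1})}})\cdots$, with uniqueness digit by digit. Only the level-$p$ identities are needed this way. Your direct version, computing $\theta^k(\lambda_c)\lambda_b$ with $b<p^k$, would require the analogue of Proposition 2.4 with $p^k$ in place of $p$.

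With twisted digits, $a\mapsto(b_{e-1},\dots,b_0)$ is still a bijection. So your Step 3 goes through once Step 1 is secured.
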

An observation of the author in a previous work says:
\begin{theorem}[\cite{MengBenson}, Theorem 2.12]
$\Gamma_e$ is the Green ring of $\mathbb{Z}/p^e\mathbb{Z}$.    
\end{theorem}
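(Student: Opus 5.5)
The plan is to construct the obvious additive bijection and show that it is multiplicative by checking it on ring generators. I recall the set-up. A $\mathbbm{k}$-object is a finite-dimensional module over $\mathbbm{k}[T]$ with $T$ nilpotent, and the tensor product uses $T\mapsto T\otimes 1+1\otimes T$. The ring $\Gamma_e$ is spanned by the classes $\delta_i$ with $1\le i\le p^e$, i.e.\ modules killed by $T^{p^e}$. On the other side, $\mathbbm{k}[\mathbb{Z}/p^e\mathbb{Z}]=\mathbbm{k}[x]/(x^{p^e})$ with $x=g-1$. Its indecomposables are $V_1,\dots,V_{p^e}$, with $V_i=\mathbbm{k}[x]/(x^i)$, and the tensor product uses $x\mapsto x\otimes 1+1\otimes x+x\otimes x$. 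Define $\psi:\Gamma_e\to$ the Green ring by $\delta_i\mapsto V_i$. This is an additive isomorphism of free $\mathbb{Z}$-modules, and both sides have unit $\delta_1$, respectively $V_1$. What remains is to show that for all $a,b\le p^e$ the two nilpotent operators
$$N=x\otimes1+1\otimes x,\qquad U=x\otimes 1+1\otimes x+x\otimes x$$
on $V_a\otimes V_b$ have the same Jordan type.

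Jordan type is determined by the ranks of all powers, so I will exploit what happens at $p$-powers. In characteristic $p$,
$$N^{p^j}=x^{p^j}\otimes 1+1\otimes x^{p^j},\qquad U^{p^j}=(g\otimes g)^{p^j}-1=x^{p^j}\otimes1+1\otimes x^{p^j}+x^{p^j}\otimes x^{p^j}.$$
These are the same two formulas with $x$ replaced by $x^{p^j}$. Restricted to the subalgebra $\mathbbm{k}[x^{p}]$ (the subgroup of index $p$), each $V_a$ splits into $p$ Jordan blocks of sizes $\lceil a/p\rceil$ or $\lfloor a/p\rfloor$. This suggests an induction on $e$ that runs in parallel on both sides. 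On the $\mathbbm{k}$-object side it mirrors Han–Monsky's recursion, encoded in the map $\theta$ of \Cref{2.8}. On the group side it is the classical Green/Renaud recursion obtained from restriction to and induction from the subgroup of order $p^{e-1}$.

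Concretely, I will first settle the base case $e=1$. Here both tensor products on modules over $\mathbbm{k}[x]/(x^p)$ are given by the truncated Clebsch–Gordan rule. For $U$ this is the standard computation for $\mathbb{Z}/p$. For $N$ it follows from Han–Monsky, or from an $\mathfrak{sl}_2$-type argument, since every block size is at most $p$. Next I will show that both rings are generated by two kinds of elements. The first kind is the image of the $e=1$ layer, namely the $\lambda_b$ with $b<p$. The second kind is the classes $\delta_{p i}$, or more precisely $\theta(\lambda_i)$. On the group side the latter correspond to the induced modules $V_{pi}=\mathrm{Ind}_{H}^{G}$ of modules for the index-$p$ subgroup $H$, when $p\mid$ the size. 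For induced modules I will use the projection formula $M\otimes \mathrm{Ind}(W)\cong \mathrm{Ind}(\mathrm{Res}(M)\otimes W)$. I will also prove the analogous identity for $\mathbbm{k}$-objects: $\delta_{pi}\otimes M$ is computed from $M$ viewed over $\mathbbm{k}[T^p]$. This holds because $\delta_{pi}\cong \mathbbm{k}[T]\otimes_{\mathbbm{k}[T^p]}\delta_i'$, and the $p$-th power of $T\otimes1+1\otimes T$ is $T^p\otimes1+1\otimes T^p$. With both identities in hand, the products involving the second kind of generator reduce on each side to the case $e-1$, where the inductive hypothesis applies. Together with the base case, this shows that $\psi$ is multiplicative on generators, hence a ring isomorphism.

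The main obstacle is making the ``induction'' identity rigorous on the $\mathbbm{k}$-object side. There is no group there, so I have to show directly that tensoring with $\mathbbm{k}[T]\otimes_{\mathbbm{k}[T^p]}(-)$ commutes with the $\mathbbm{k}$-object tensor product, in the same way as the projection formula. A further difficulty is that for $e\ge2$ the two coproducts give non-isomorphic Hopf algebras, so only Jordan types, and not the module structures themselves, can be matched. If the generator argument becomes too delicate, my fallback is to compare ranks of all powers $N^k$ and $U^k$ directly. I would write $k$ in base $p$ and factor the power through the $p$-power identities above, reducing to ranks of products of block-wise operators that agree on the two sides.
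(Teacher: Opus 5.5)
The reduction is right: $\delta_i\mapsto V_i$ is an additive bijection, and everything comes down to showing that $N=x\otimes 1+1\otimes x$ and $U=x\otimes 1+1\otimes x+x\otimes x$ are conjugate on $V_a\otimes V_b$. The induction you build on top of it, however, breaks at the generation step.

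\textbf{The generation step fails.} The two candidate "second kind" generators each fail one of the two requirements.
\begin{enumerate}
\item The elements $\theta(\lambda_i)$ do generate together with $\Gamma_1$, but they are not induced modules: $\theta(\lambda_i)=\lambda_{pi}=\delta_{pi+1}-\delta_{pi}$ for $i$ even, and $\theta(\lambda_i)=\lambda_{pi+p-1}=\delta_{pi+p-1}-\delta_{pi+p}$ for $i$ odd. So the projection formula says nothing about them.
\item The induced classes $\delta_{pi}$ satisfy the projection formula, but together with $\Gamma_1$ they do not generate $\Gamma_e$. By the projection formula itself, $\delta_j\delta_{pi}=j\,\delta_{pi}$ for $j\le p$, since $x^p$ acts as zero on $V_j$. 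Also $\delta_{pi}\delta_{pi'}=p\,\mathrm{Ind}_H^G(W_i\otimes W_{i'})$, where $W_i$ is the $i$-dimensional indecomposable for $H$. Hence the subring they generate is $\Gamma_1+\sum_i\mathbb{Z}\delta_{pi}$, which for $e\ge 2$ does not contain $\delta_{p+1}$.
\end{enumerate}
A product such as $\delta_{p+1}\cdot\delta_{p+1}$ is therefore covered neither by your base case nor by the projection formula, and the argument does not close. Separately, for an additive map, multiplicativity on pairs of generators is not enough; you need (generator)$\times$(arbitrary element).

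\textbf{Smaller points.} The step you flag as the main obstacle is actually harmless. $\mathbbm{k}[T]/(T^{p^e})$ with $T$ primitive and antipode $T\mapsto -T$ is a Hopf algebra with Hopf subalgebra $\mathbbm{k}[T^p]$, so the projection formula holds there as for any Hopf algebra. Your worry about non-isomorphic Hopf structures is misplaced: they already differ for $e=1$, and a module over $\mathbbm{k}[T]/(T^{p^e})$ is nothing but its Jordan type. The rank-of-powers fallback is too vague to evaluate.

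\textbf{The missing idea} is a direct intertwiner. It makes the induction, the base case from Han--Monsky and Green, and the choice of generators all unnecessary.
\begin{enumerate}
\item Identify $V_a\otimes V_b$ with $A=\mathbbm{k}[T_1,T_2]/(T_1^a,T_2^b)$, so that $N$ and $U$ are multiplication by $T_1+T_2$ and by $T_1+T_2+T_1T_2$.
\item Let $\bar\rho$ be the algebra endomorphism of $A$ fixing $T_1$ and sending $T_2\mapsto (1+T_1)T_2$. It is well defined because the ideal $(T_1^a,T_2^b)$ is stable. It is invertible, with inverse $T_2\mapsto(1+T_1)^{-1}T_2$, because $1+T_1$ is a unit.
\item $\bar\rho(T_1+T_2)=T_1+T_2+T_1T_2$, so $\bar\rho\circ N=U\circ\bar\rho$ as linear maps on $A$.
\end{enumerate}
Thus $N$ and $U$ are conjugate for all $a,b$ at once. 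This is precisely the paper's proof. There it is phrased, after reducing to cyclic modules, as an automorphism $\rho$ of $\mathbbm{k}[T_1,T_2]/(T_1^{p^e},T_2^{p^e})$ with $\rho\varphi=\psi$ that preserves $(T_1^a,T_2^b)$.
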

Therefore, the above theorem also gives the structure of the Green ring. The importance of this isomorphism theorem is that it is \textit{explicit in $p$-adic expansions}. This allows us to do concrete computation of the Hilbert-Kunz function and Hilbert-Kunz multiplicity in the representation ring. 

The next step is direct computation, where we recover the following result in \cite{PSSY}. We set $r=(p-1)/2$.
\begin{theorem}[\cite{PSSY}, Corollary 3.3]
$b_{p,d}=1+\frac{\mu((\delta_r+\delta_{r+1})^d)-p^{d-1}}{p^{d-1}-(-1)^{rd}\mu((\delta_{r+1}-\delta_r)^d)}$    
\end{theorem}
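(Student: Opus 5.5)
We would compute $b_{p,d}$ with the Han--Monsky framework, working in $\Gamma_e$ and then passing to $\Gamma_1^{\otimes e}$ via $\varphi_e$ (\Cref{2.8}). The starting point is the standard fact that $\ell(\mathbb{F}_p[x_1,\dots,x_d]/(x_1^{q},\dots,x_d^{q},\sum x_i^2))$ with $q=p^e$ is read off from the product $\delta_q^{\otimes}$-type decomposition. Concretely, the $\mathbbm{k}$-object $\mathbbm{k}[x]/(x^q)$, with $T$ acting by $x^2$, decomposes as $\delta_{(q-1)/2}\oplus\delta_{(q+1)/2}$ (even and odd powers). The length of $R_d/\mathfrak m^{[q]}$ is then the dimension of the cokernel of $T$ on the tensor product $(\delta_{(q-1)/2}+\delta_{(q+1)/2})^d$. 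This cokernel dimension is additive, so it defines a homomorphism $\mu$ on $\Gamma$ that counts indecomposable summands: $\mu(\delta_i)=1$.

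Thus $\ell(R/\mathfrak m^{[q]})=\mu\bigl((\delta_{(q-1)/2}+\delta_{(q+1)/2})^d\bigr)$. The first key step is to express $\delta_{(q\pm1)/2}$ through the $\lambda$'s and $\theta$. Writing $(q-1)/2=r(1+p+\dots+p^{e-1})$, all $p$-adic digits of this number equal $r$. So $\delta_{(q+1)/2}-\delta_{(q-1)/2}=\pm\lambda_{(q-1)/2}$ factors as $\pm\theta^{e-1}(\lambda_{r})\cdots\lambda_r$, and hence maps under $\varphi_e$ to $\pm\lambda_r^{\otimes e}$. Similarly $\delta_{(q-1)/2}+\delta_{(q+1)/2}$ should be expressible as a combination of the sums $\lambda_0+\lambda_1+\dots$ (since $\delta_n=\sum_{i<n}\pm\lambda_i$ telescopically). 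We need the analogous tensor factorization. I expect something like $s_e=\delta_{(q-1)/2}+\delta_{(q+1)/2}$ satisfying a linear recursion $s_{e}=A\,\theta(s_{e-1})+B\,\theta(\lambda_{\ldots})$, which in $\Gamma_1^{\otimes e}$ becomes a two-term recursion in the vector $(s,\lambda)$ with $s_1=\delta_r+\delta_{r+1}$ and $\lambda_r=\pm(\delta_{r+1}-\delta_r)$.

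Next, we raise to the $d$-th power and apply $\mu$. The crucial point is that $\mu\circ\varphi_e^{-1}$ should be computable factorwise, or at least satisfy a recursion in $e$: we aim to show that $\mu(\theta(x)y)$ relates to $\mu(x)$ and $\mu(y)$ in a controlled way, e.g. $\mu(\theta(x)\lambda_j)$ is $p\mu(x)$ up to a sign correction. Then $\ell_e:=\ell(R/\mathfrak m^{[p^e]})$ satisfies a linear recurrence of the form
\[
\ell_{e}=\mu(s_1^d)\,\ell_{e-1}+c\,\bigl((-1)^{rd}\mu(\lambda_r^d)\bigr)^{e-1}\cdots ,
\]
that is, a recurrence with characteristic roots $p^{d-1}$-scaled and $(-1)^{rd}\mu((\delta_{r+1}-\delta_r)^d)$. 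Solving this recurrence, dividing by $p^{e(d-1)}$ and letting $e\to\infty$ gives the geometric-series expression. The denominator $p^{d-1}-(-1)^{rd}\mu((\delta_{r+1}-\delta_r)^d)$ comes from summing the geometric series, and the numerator $\mu((\delta_r+\delta_{r+1})^d)-p^{d-1}$ comes from the first-step excess over the regular value. The form "$1+\cdots$" suggests writing $\ell_e=p^{e(d-1)}+(\text{correction})$ and showing the correction obeys $c_e=(-1)^{rd}\mu(\lambda_r^d)c_{e-1}+(\mu(s_1^d)-p^{d-1})p^{(e-1)(d-1)}$.

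The main obstacle is the interaction of $\mu$ with $\theta$, i.e. establishing precisely how $\mu$ evaluates on images of $\varphi_e^{-1}$ of tensors. My first attempt is to verify on generators that $\mu(\theta(\lambda_i)\lambda_j)=\mu(\lambda_{pi+\epsilon+j})$, and that for $i$ even/odd this gives $\mu(\lambda_i)\cdot(\text{something in }j)$. The quantity $\mu(\lambda_n)=\pm(1-1)$ vanishes except at the boundary $\lambda_0$, so $\mu$ is nearly the augmentation tracking the sign $(-1)^i$ and the $\delta_p$-component. I would therefore decompose $s_1^d$ in $\Gamma_1$ as $\alpha\,\delta_p+\beta$ with $\beta$ in the span of $\lambda$'s. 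This uses that $\delta_p$ is free, so $\delta_p\cdot x=(\dim x)\delta_p$, and that $\mu(\delta_p)=1$ contributes $p^{d-1}$-type terms. Carrying out this bookkeeping carefully, with its signs $(-1)^{rd}$, is where the work lies.
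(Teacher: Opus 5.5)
Your framework matches the paper's, and three of your ingredients are correct. First, $\ell(R/\mathfrak m^{[q]})=\mu\bigl((\delta_{(q-1)/2}+\delta_{(q+1)/2})^d\bigr)$. Second, since every digit of $(q-1)/2$ is $r$ and $r^*=r$, you get $\varphi_e(\lambda_{(q-1)/2})=\lambda_r^{\otimes e}$. Third, your second recurrence for the excess is the right target in every case: $c_e=\mu(\lambda_r^d)\,c_{e-1}+(M-p^{d-1})p^{(e-1)(d-1)}$, with $M=\mu((\delta_r+\delta_{r+1})^d)$. Note, though, that the coefficient is $\mu(\lambda_r^d)=(-1)^{rd}\mu((\delta_{r+1}-\delta_r)^d)$, not $(-1)^{rd}\mu(\lambda_r^d)$ as you wrote. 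Since $\delta_{r+1}-\delta_r=(-1)^r\lambda_r$, the two differ exactly when $rd$ is odd. Your first recurrence, with $\mu(s_1^d)$ multiplying $\ell_{e-1}$, is simply wrong: it would make $\ell_e$ grow like $\mu(s_1^d)^e$.

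\textbf{The gap.} The recurrence is asserted, not derived. The two facts that derive it are exactly the ones you leave open or guess incorrectly.
\begin{enumerate}
\item $\mu$ does not interact with $\theta$ as ``$p\mu(x)$ up to a sign''. It is exactly multiplicative: $\mu(\theta^{e-1}(x_1)\cdots\theta(x_{e-1})x_e)=\mu(x_1)\cdots\mu(x_e)$, i.e.\ $\mu=\mu^{\otimes e}\circ\varphi_e$. This is immediate on the $\lambda$-basis: $\mu(\lambda_a)=1$ iff $a=0$, and $\varphi_e(\lambda_a)=\lambda_{b_{e-1}}\otimes\cdots\otimes\lambda_{b_0}$ with $a=0$ iff all $b_i=0$. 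Your own test case already gives $\mu(\theta(\lambda_i)\lambda_j)=\mu(\lambda_i)\mu(\lambda_j)$.
\item You need the explicit image $w_e=\varphi_e(v_e)$ of $v_e=\frac12(\delta_{(q-1)/2}+\delta_{(q+1)/2})$. Group the terms of $\delta_{(q-1)/2}=\sum_{a<(r\cdots r)_p}(-1)^a\lambda_a$ by the first digit at which $a$ drops below $r$. For $r$ even this gives $w_e=\delta_r\otimes\Delta_{e-1}+\lambda_r\otimes w_{e-1}$, with $\Delta_{e-1}=\varphi_{e-1}(\delta_{p^{e-1}})=\delta_p^{\otimes(e-1)}$.
\end{enumerate}
Freeness must be used on this tail factor $\Delta_{e-1}$, not by splitting $s_1^d$ inside $\Gamma_1$. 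In the binomial expansion of $w_e^d$, every term containing $\Delta_{e-1}$ collapses to a scalar multiple of $\Delta_{e-1}$, using $\Delta_{e-1}x=\tilde\ell(x)\Delta_{e-1}$, $\tilde\ell(w_{e-1})=p^{e-1}/2$ and $\tilde\mu(\Delta_{e-1})=1$. Only $\lambda_r^d\otimes w_{e-1}^d$ recurses. This yields $\ell_e=\mu(\lambda_r^d)\ell_{e-1}+p^{(e-1)(d-1)}\mu((\delta_r+\delta_{r+1})^d-\lambda_r^d)$, which is your $c_e$-recurrence.

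\textbf{The case $r$ odd.} Here there is a further step you have not accounted for. The digit reflection $i\mapsto p-1-i$ after each digit $r$ makes the recursion coupled:
$w_e=\delta_r\otimes\Delta_{e-1}-\lambda_r\otimes w^*_{e-1}$ and $w^*_e=\delta^*_r\otimes\Delta_{e-1}-\lambda_r\otimes w_{e-1}$, with $\delta_r^*=\delta_p-\delta_{r+1}$.
It closes only because $(2\delta_p-s_1)^d=(1-(-1)^d)p^{d-1}\delta_p+(-s_1)^d$; this is where your splitting off of $\delta_p$ inside $\Gamma_1$ genuinely enters. For $d$ odd the per-step ratio is $\mu((-\lambda_r)^d)=-\mu(\lambda_r^d)$ and the coefficients alternate. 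The single-step recurrence with ratio $+\mu(\lambda_r^d)$ appears only after combining the two interleaved sequences: the paper sums the period-two geometric series, or equivalently one checks that the excesses of $2^d\tilde\mu(w_e^d)$ and $2^d\tilde\mu((w_e^*)^d)$ over $p^{e(d-1)}$ are negatives of each other.

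\textbf{The limit.} To pass to the limit you need $0\le\mu(\lambda_r^d)<p^{d-1}$. The paper obtains this from the nonnegativity of the Han--Monsky product rule for $\lambda_i\lambda_j$.

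None of this is mere bookkeeping; it is the content of the proof.
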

Here $\mu$ is the map sending a $\mathbbm{k}$-object to its number of generators. Moreover we also compute the Hilbert-Kunz function:
\begin{theorem}
The Hilbert-Kunz function of $Q_{p,d}$ is $b_{p,d}p^{e(d-1)}+C(e)\mu((-1)^r\lambda_r^d)^e$. Here $C(e)$ is a constant for $d$ even and is periodic of period 2 for $d$ odd.    
\end{theorem}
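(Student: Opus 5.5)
The plan is to compute $\ell(Q_{p,d}/\mathfrak{m}^{[q]})$, $q=p^e$, inside $\Gamma_e$ via the Han--Monsky method, then evaluate it through the isomorphism $\varphi_e$ of \Cref{2.8}.

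\textbf{Reduction to the representation ring.} Since $\mathfrak{m}^{[q]}=(x_1^q,\ldots,x_d^q)$ and $f=\sum x_i^2$,
$$\ell(Q_{p,d}/\mathfrak{m}^{[q]})=\ell\bigl(\mathbbm{k}[x_1,\ldots,x_d]/(x_1^q,\ldots,x_d^q,f)\bigr).$$
Following \cite{HM93}, view $\mathbbm{k}[x]/(x^q)$ as a $\mathbbm{k}[T]$-module with $T$ acting by $x^2$. It splits by parity of exponents into two cyclic pieces. Since $q$ is odd, these pieces have dimensions $(q+1)/2$ and $(q-1)/2$, so the class is $\delta_{(q+1)/2}+\delta_{(q-1)/2}$. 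The tensor product over $\mathbbm{k}[T]$, with $T$ acting diagonally as $\sum T_i$, is the product in $\Gamma$. The length above is the dimension of the cokernel of $T$ on the $d$-fold product, which is exactly $\mu$ of that product. Hence
$$\ell\bigl(Q_{p,d}/\mathfrak{m}^{[q]}\bigr)=\mu\bigl((\delta_{(q+1)/2}+\delta_{(q-1)/2})^d\bigr).$$

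\textbf{Expressing the object in $\Gamma_1^{\otimes e}$.} Write $\delta_{(q\pm1)/2}$ through the $\lambda_i$. Since $\delta_{n}=\sum_{i<n}(-1)^i\lambda_i$, we get
$$\delta_{(q+1)/2}+\delta_{(q-1)/2}=\delta_0\text{-terms}+\ldots,$$
and more precisely $\delta_{m+1}+\delta_m$ is an alternating sum $2\sum_{i<m}(-1)^i\lambda_i+(-1)^m\lambda_m$. Here $(q-1)/2=r(1+p+\cdots+p^{e-1})$ has all $p$-adic digits equal to $r$. The key step is to show, using $\theta$ and the multiplicativity in \Cref{2.8}, that $\varphi_e$ sends this element to a ``self-similar'' expression built from $u=\delta_r+\delta_{r+1}$ and $v=\pm(\delta_{r+1}-\delta_r)$ in $\Gamma_1$. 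I expect an identity of the form
$$\varphi_e(\delta_{(q+1)/2}+\delta_{(q-1)/2})=\sum_j c_j\,(\ldots\otimes u\otimes v\otimes\cdots\otimes v),$$
and after raising to the $d$-th power the terms form a geometric structure. This is a telescoping/transfer-matrix expression. One also needs that $\mu\circ\varphi_e^{-1}$ is computable on pure tensors, ideally multiplicatively up to a transfer matrix in $p^{d-1}$ and $\mu(v^d)$. This requires checking how $\mu$ behaves on $\theta^k(\lambda_b)\lambda_c$, presumably via $\mu(\lambda_a)$ depending only on $a$ mod $2$.

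\textbf{Solving the recursion.} This yields a linear recursion in $e$ for $\ell_e$ of the form $\ell_{e+1}=p^{d-1}\ell_e+(\text{const})\cdot\mu((-1)^r\lambda_r^d)^e$. It has eigenvalues $p^{d-1}$ and $\mu((-1)^r\lambda_r^d)$, with sign behaviour giving period $2$ when $d$ is odd, since $(-1)^{rd}$ appears. Solving it gives $\ell_e=b\,p^{e(d-1)}+C(e)\mu((-1)^r\lambda_r^d)^e$, and the leading coefficient must match the formula of \cite{PSSY} stated above.

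\textbf{Main obstacle.} The hardest step is the middle one: obtaining the precise $\varphi_e$-image of $\delta_{(q\pm1)/2}$ and controlling $\mu$ on tensors. The first attempt would be to prove by induction on $e$ that
$$\delta_{(pq+1)/2}\pm\delta_{(pq-1)/2}=\theta(\delta_{(q+1)/2}\pm\delta_{(q-1)/2})\cdot(\text{explicit element in }u,v),$$
by checking it on $\lambda$-bases with the digit-$r$ expansion.
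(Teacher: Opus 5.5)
Your reduction to $\mu\bigl((\delta_{(q+1)/2}+\delta_{(q-1)/2})^d\bigr)$ is correct and is the paper's starting point, and the shape you predict for the answer is right. The gap is the middle step, which is essentially the whole proof: you only conjecture it, and the specific guesses you make there do not work.

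\textbf{What goes wrong in the middle step.}

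First, $\mu(\lambda_a)$ does not depend on the parity of $a$: $\mu(\lambda_0)=1$ and $\mu(\lambda_a)=0$ for $a\geq 1$. Parity-dependence is a property of $\ell$, where $\ell(\lambda_a)=(-1)^a$. What you actually need is $\mu=\mu^{\otimes e}\circ\varphi_e$. This holds because $\varphi_e(\lambda_a)=\lambda_{b_{e-1}}\otimes\cdots\otimes\lambda_{b_0}$ with all $b_i=0$ if and only if $a=0$, so no transfer matrix is involved.

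Second, your proposed induction fails for the $+$ combination. For $Z\in\Gamma_e$ and $Y\in\Gamma_1$, $\varphi_{e+1}$ sends $\theta(Z)\cdot Y$ to the pure tensor $\varphi_e(Z)\otimes Y$. But $\varphi_{e+1}(\delta_{(pq-1)/2}+\delta_{(pq+1)/2})$ has tensor rank $2$ (see the identity below). Only the $-$ combination factors: $\delta_{(pq+1)/2}-\delta_{(pq-1)/2}=\theta(\delta_{(q+1)/2}-\delta_{(q-1)/2})\,(\delta_{r+1}-\delta_r)$.

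Third, and most importantly, you never identify what makes the $d$-th power computable. $\delta_p$ is projective, so $x\delta_p=\ell(x)\delta_p$, and likewise $w\Delta_{e-1}=\tilde\ell(w)\Delta_{e-1}$ in $\Gamma_1(\mathbb{C})^{\otimes e}$. This collapses every mixed term of the binomial expansion to a scalar multiple of $\delta_p$, with $\mu(\delta_p)=1$. That absorption is exactly where the factor $p^{e(d-1)}$ comes from.

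\textbf{How the paper does it.} The paper works top-down. Grouping $\sum_{a<(r\cdots r)_p}(-1)^a\lambda_a$ by the first digit of $a$ that drops below $r$ gives $w_e=\delta_r\otimes\Delta_{e-1}+\lambda_r\otimes w_{e-1}$. When $r$ is odd there is a digit-reflected companion $w_e^*$ built from $\delta_r^*=\delta_p-\delta_{r+1}$. Absorption by $\Delta_{e-1}$ kills the cross terms, giving $a_{p,e,d}=c_{p,d}p^{e(d-1)}+\mu(\lambda_r^d)a_{p,e-1,d}$ (with alternating coefficients $c'_{p,d},c'^*_{p,d}$ when $r$ is odd). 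This is then summed as a geometric series.

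\textbf{How your bottom-up route can be repaired.} Your recursion can also be made to work, but through a sum rather than a product. Put $m=(q-1)/2$ and $X_e=\varphi_e(\delta_{(q-1)/2}+\delta_{(q+1)/2})$. Using $\delta_{pm}=\theta(\delta_m)\delta_p$, \Cref{2.4}, and $\varphi_e(\lambda_m)=\lambda_r^{\otimes e}$ (since $r^*=r$), one checks for either parity of $m$ that
\[
\varphi_{e+1}\bigl(\delta_{(pq-1)/2}+\delta_{(pq+1)/2}\bigr)=(X_e-\lambda_r^{\otimes e})\otimes\delta_p+\lambda_r^{\otimes e}\otimes(\delta_r+\delta_{r+1}).
\]
Absorption, together with $\ell(\delta_r+\delta_{r+1})=p$, then gives
$a_{p,e+1,d}=p^{d-1}a_{p,e,d}+\bigl(\mu((\delta_r+\delta_{r+1})^d)-p^{d-1}\bigr)\mu(\lambda_r^d)^e$.
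This is the recursion you guessed. With $a_{p,0,d}=1$ it yields $a_{p,e,d}=b_{p,d}p^{e(d-1)}+(1-b_{p,d})\mu(\lambda_r^d)^e$. So relative to $\mu(\lambda_r^d)^e$ the constant is genuinely constant, and the period-$2$ behaviour in the statement's normalization is only a sign, as you suspected. As written, however, none of this is in your proposal, so the statement is not yet proved.
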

After we express $e_{HK}(Q_{p,d})$ in terms of products in $\Gamma_1$, we need to compute this product. We apply the previous observation that $\Gamma_1$ is a Green ring together with the result of \cite{Green} which says $\Gamma_1$ is semisimple and gives all homomorphisms from the Green ring to $\mathbb{C}$. This allows us to describe the structure of $\Gamma_1$ using Gelfand transform, which makes the product structure much more explicit. We get the following analytical expression of $e_{HK}(Q_{p,d})$:
\begin{theorem}
We set $r=(p-1)/2$ and
$$S_{p,d}=\sum_{1 \leq i \leq p-1, i \textup{ odd}}\sin(\frac{i\pi}{2p})^{-d},T_{p,d}=\sum_{1 \leq i \leq p-1, i \textup{ odd}}(-1)^{(i-1)/2}\sin(\frac{i\pi}{2p})^{-d}.$$
Then
$$b_{p,d}-1=\begin{cases}
\frac{2S_{p,d}-2S_{p,d-2}}{p^d-2S_{p,d-2}-1} & d \textup{ even}\\
\frac{2T_{p,d}-2T_{p,d-2}}{p^d-2T_{p,d-2}-(-1)^{r}} & d \textup{ odd}.
\end{cases}$$    
\end{theorem}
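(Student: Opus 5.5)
Starting from the formula of \cite{PSSY}, Corollary 3.3 recalled above,
$$b_{p,d}-1=\frac{\mu((\delta_r+\delta_{r+1})^d)-p^{d-1}}{p^{d-1}-(-1)^{rd}\mu((\delta_{r+1}-\delta_r)^d)},$$
the plan is to evaluate the two numbers $\mu((\delta_r+\delta_{r+1})^d)$ and $\mu((\delta_{r+1}-\delta_r)^d)$ by the Gelfand transform of $\Gamma_1$. By \cite{MengBenson}, $\Gamma_1$ is the Green ring of $\mathbb{Z}/p\mathbb{Z}$. By \cite{Green}, $\Gamma_1\otimes\mathbb{C}$ is semisimple of dimension $p$. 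Its characters are the specialisations $\delta_2\mapsto 2\cos(j\pi/p)$ for $j=1,\ldots,p-1$, together with one extra character: $\delta_2\mapsto 2$ on $\delta_1,\dots,\delta_{p-1}$ but killing the projective $\delta_p$, equivalently $\dim$ modulo the projective part.

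Under the character $\chi_j$, $j=1,\ldots,p-1$, the Chebyshev recursion $\delta_2\delta_i=\delta_{i-1}+\delta_{i+1}$ for $i<p$ gives
$$\chi_j(\delta_i)=\frac{\sin(ij\pi/p)}{\sin(j\pi/p)}.$$
The first step is to write $\mu$ as a linear functional on $\Gamma_1\otimes\mathbb{C}$, which by semisimplicity is a combination $\mu=\sum_\chi c_\chi\chi$. The coefficients $c_\chi$ are obtained by solving the linear system $\mu(\delta_i)=1$ for all $i$. This uses orthogonality of the sine characters (a discrete sine transform), with the projective $\delta_p$ separating out the dimension-type character. I expect the outcome to be: a term $\frac1p\dim$ coming from the projective part, plus a weighted sum over $j$ of $\chi_j$ with weights proportional to $\sin^2(j\pi/p)$ and supported on $j$ of one parity. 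Then $\mu(x^d)=\sum c_\chi\chi(x)^d$ for any $x$.

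Next I compute the characters of $x=\delta_r+\delta_{r+1}$ and $y=\delta_{r+1}-\delta_r$ with $r=(p-1)/2$. Sum-to-product gives
$$\chi_j(\delta_r+\delta_{r+1})=\frac{2\sin(j\pi/2)\cos(j\pi/2p)}{\sin(j\pi/p)}=\frac{\sin(j\pi/2)}{\sin(j\pi/2p)},$$
which vanishes for $j$ even and equals $\pm\sin(j\pi/2p)^{-1}$ for $j$ odd. Similarly $\chi_j(y)$ is $\pm \sin(j\pi/2p)^{-1}$ for $j$ even, and reindexing $j\mapsto p-j$ turns it into the odd-index form. The dimension character gives $p^d$ for $x$ and $1$ for $y$.

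Substituting, the powers $\chi_j(x)^d$ weighted by $c_j\propto\sin^2(j\pi/p)=4\sin^2(j\pi/2p)\cos^2(j\pi/2p)$ produce $\sin^{-d}-\sin^{-(d-2)}$ terms. This is the source of the combination $S_{p,d}-S_{p,d-2}$ (respectively $T_{p,d}-T_{p,d-2}$). The signs $\sin(j\pi/2)^d=((-1)^{(j-1)/2})^d$ are trivial for $d$ even and give the alternating sum $T$ for $d$ odd. The factor $(-1)^{rd}$ in the denominator together with $p^{d-1}$ produces $p^d-2S_{p,d-2}-1$, respectively $p^d-2T_{p,d-2}-(-1)^r$, after multiplying numerator and denominator by $p$. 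Doing this reduces the claim to an elementary trigonometric identity.

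The main obstacle is the first step: pinning down the exact expansion of $\mu$ in characters, including the normalisation of $c_j$ and its parity support. Equally delicate is tracking the signs when reindexing $j\mapsto p-j$ for $y$, so that the constants $-1$ versus $-(-1)^r$ and the factor $2$ come out exactly. I would fix these by checking small cases, $p=3$ and $d=2,3$, against a direct computation, where $b_{3,2}=3/2$ should hold.
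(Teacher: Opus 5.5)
Your route is the paper's: Green's species, expand $\mu$ in species, evaluate on $\delta_r\pm\delta_{r+1}$, reindex $j\mapsto p-j$. However, there is a genuine gap at the one step that carries the content. Your predicted expansion of $\mu$ is wrong, and the substitution you describe does not follow from it.

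\textbf{The coefficients.} They are neither proportional to $\sin^2(\frac{j\pi}{p})$ nor supported on one parity.
\begin{itemize}
\item The $\delta_p$ row of the linear system gives $c_p=\frac{1}{p}$ for the species $\dim$. Note that $\dim$ sends $\delta_p\mapsto p$. It is the $p-1$ sine species that kill $\delta_p$. No ring homomorphism sends $\delta_i\mapsto i$ for $i<p$ and $\delta_p\mapsto 0$, since $\delta_2\delta_{p-1}=\delta_{p-2}+\delta_p$.
\item The other rows read $\sum_{k=1}^{p-1}c_k\frac{\sin(ik\pi/p)}{\sin(k\pi/p)}=1-\frac{i}{p}$. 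Sine orthogonality together with $\sum_{i=1}^{p-1}\frac{p-i}{p}\sin(\frac{ik\pi}{p})=\frac{\sin(k\pi/p)}{2-2\cos(k\pi/p)}$ gives $c_k=\frac{1+\cos(k\pi/p)}{p}=\frac{2}{p}\cos^2(\frac{k\pi}{2p})$. This is nonzero for every $1\le k\le p-1$, and it is exactly the lemma the paper proves before computing $b_{p,d}$.
\item The parity splitting comes from the characters, not from $c_k$. $s_k(\delta_r+\delta_{r+1})=\sin(\frac{k\pi}{2})/\sin(\frac{k\pi}{2p})$ vanishes for even $k$. $s_k(\delta_{r+1}-\delta_r)=\cos(\frac{k\pi}{2})/\cos(\frac{k\pi}{2p})$ vanishes for odd $k$. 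So for even $k$ it is $\pm\cos(\frac{k\pi}{2p})^{-1}$, not $\pm\sin(\frac{k\pi}{2p})^{-1}$; the sine only appears after $k\mapsto p-k$.
\item With your guessed weights the computation would actually fail: $\sin^2(\frac{k\pi}{p})\sin(\frac{k\pi}{2p})^{-d}=4\sin(\frac{k\pi}{2p})^{2-d}-4\sin(\frac{k\pi}{2p})^{4-d}$. This produces $S_{p,d-2}-S_{p,d-4}$ rather than $S_{p,d}-S_{p,d-2}$.
\end{itemize}

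\textbf{How the correct $c_k$ closes the identity.}
\begin{itemize}
\item For odd $k$, $c_k s_k(x)^d=\frac{2}{p}(-1)^{d(k-1)/2}\bigl(\sin(\frac{k\pi}{2p})^{-d}-\sin(\frac{k\pi}{2p})^{2-d}\bigr)$. The $\dim$ term $\frac{1}{p}\,p^d$ cancels $p^{d-1}$. So $\mu(x^d)-p^{d-1}$ equals $\frac{2}{p}(S_{p,d}-S_{p,d-2})$, resp. $\frac{2}{p}(T_{p,d}-T_{p,d-2})$.
\item For even $k=p-j$ with $j$ odd, $c_{p-j}=\frac{2}{p}\sin^2(\frac{j\pi}{2p})$ and $s_{p-j}(y)^d=(-1)^{d(p-j)/2}\sin(\frac{j\pi}{2p})^{-d}$. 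Also $(-1)^{rd}(-1)^{d(p-j)/2}=(-1)^{d(j-1)/2}$ because $p-j$ is even.
\item Adding the $\dim$ contribution $(-1)^{rd}/p$ and multiplying through by $p$ turns the denominator into $p^d-2S_{p,d-2}-1$, resp. $p^d-2T_{p,d-2}-(-1)^r$.
\end{itemize}

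\textbf{Your calibration value is off by the index shift.} $Q_{p,2}$ is one-dimensional of multiplicity $2$, so $b_{p,2}=2$ for all $p$. For $p=3$: $\mu((\delta_1+\delta_2)^2)=5$ and $\mu((\delta_2-\delta_1)^2)=1$ give $b_{3,2}-1=\frac{5-3}{3-1}=1=\frac{2S_{3,2}-2S_{3,0}}{9-2S_{3,0}-1}$. The value $\frac{3}{2}$ is $b_{p,3}$. Checking against $\frac{3}{2}$ would have made you reject a correct computation.
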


Now the final result can be proved using analysis. We explicitly proved the case $p \leq 5$ by direct computation and $p \geq 5,d \geq 16$ by an estimate of $S_{p,d}$ or $T_{p,d}$. Since the case $d \leq 15$ is already proved in \cite{PSSY}, the conjecture is confirmed.

Apart from the monotonicity, we also explored the expression of $e_{HK}(Q_{p,d})$ in terms of $p,d$. Note that the result in \cite{PSSY} says $e_{HK}(Q_{p,d})$ is a rational function of $p$ when $d$ is fixed. In this paper, we also find another expression of $S_{p,d}$ and $T_{p,d}$ using residues of meromorphic function which shows that $S_{p,d}$ and $T_{p,d}+\frac{1}{2}(-1)^r$ are polynomials of $p$ for $d \geq 1$. Therefore, the previous theorem leads to a closed formula for this rational function for $d \geq 3$: 
\begin{theorem}[See \Cref{4.7}]
Let $d$ be a fixed integer, and $p \geq 3$ be an odd prime number. We have:
\begin{enumerate}
\item When $d \geq 4$ is even, 
$$b_{p,d}-1=\frac{\sum_{i=1}^{d/2}\frac{(-1)^{i-1} 2^{2i} (2^{2i} - 1) B_{2i}\rho_{d,d-2i}p^{2i}}{(2i)!}-\sum_{i=1}^{(d-2)/2}\frac{(-1)^{i-1} 2^{2i} (2^{2i} - 1) B_{2i}\rho_{d-2,d-2i-2}p^{2i}}{(2i)!}}{p^d-\sum_{i=1}^{(d-2)/2}\frac{(-1)^{i-1} 2^{2i} (2^{2i} - 1) B_{2i}\rho_{d-2,d-2i-2}p^{2i}}{(2i)!}}$$
\item When $d \geq 3$ is odd,
$$b_{p,d}-1=\frac{\sum_{i=0}^{(d-1)/2}\frac{(-1)^i E_{2i}\rho_{d,d-2i-1}p^{2i+1}}{(2i)!}-\sum_{i=0}^{(d-3)/2}\frac{(-1)^i E_{2i}\rho_{d-2,d-2i-3}p^{2i+1}}{(2i)!}}{p^d-\sum_{i=0}^{(d-3)/2}\frac{(-1)^i E_{2i}\rho_{d-2,d-2i-3}p^{2i+1}}{(2i)!}}$$
\end{enumerate}
\end{theorem}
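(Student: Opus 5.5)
The plan is to combine the analytic formula of the previous theorem, expressing $b_{p,d}-1$ through $S_{p,d},S_{p,d-2}$ (for $d$ even) or $T_{p,d},T_{p,d-2}$ (for $d$ odd), with closed polynomial expressions in $p$ for these trigonometric sums. Substituting those expressions into the previous theorem then gives the displayed formulas directly. So the real content is to prove
\[
2S_{p,d}=\sum_{i=1}^{d/2}\frac{(-1)^{i-1}2^{2i}(2^{2i}-1)B_{2i}\rho_{d,d-2i}}{(2i)!}p^{2i}\quad(d\text{ even}),
\]
and, for $d$ odd,
\[
2T_{p,d}+(-1)^r=\sum_{i=0}^{(d-1)/2}\frac{(-1)^iE_{2i}\rho_{d,d-2i-1}}{(2i)!}p^{2i+1}.
\]
Here the coefficients $\rho_{d,k}$ are the Laurent coefficients of $\sin(z)^{-d}$ (or $\csc^d$ of a rescaled variable), defined as in the paper.

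These identities should also cover the denominators. In the $S$ case, the term $-1$ is exactly what is absent from $2S_{p,d-2}$, since the $i$ sum in the denominator starts at $1$. Likewise, $2T_{p,d-2}+(-1)^r$ is absorbed in the odd case. I would check these bookkeeping matches first, since they dictate the exact normalization of the constant terms.

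To prove the sum identities, I will use residues. For $d$ even, take $f(z)=\sin(z)^{-d}\cdot g(z)$, where $g$ is a kernel with simple poles precisely at the points $z=i\pi/(2p)$ with $i$ odd. A natural choice is $g(z)=p\tan(pz)$, whose poles sit at $pz=\pi/2+k\pi$, i.e.\ $z=(2k+1)\pi/(2p)$, each with residue $-1$.

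The function $f$ is $\pi$-periodic; $\sin^{-d}$ is $\pi$-periodic for $d$ even, and $\tan(pz)$ is as well. Integrate over the boundary of the strip $0\le\mathrm{Re}\,z\le\pi$, truncated at $\mathrm{Im}\,z=\pm R$. The vertical sides cancel by periodicity, and the horizontal sides vanish as $R\to\infty$ because $\sin^{-d}$ decays exponentially while $\tan$ stays bounded. So the total sum of residues in the strip is zero.

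The poles of the kernel in $(0,\pi)$ are the odd multiples $i\pi/(2p)$ with $1\le i\le 2p-1$. By the symmetry $z\mapsto\pi-z$, these contribute $-2S_{p,d}$. The remaining pole is at $z=0$, of order $d+1$. Its residue is the $z^{-1}$ coefficient of $\sin(z)^{-d}p\tan(pz)$, which is computed from the Taylor series $\tan w=\sum (-1)^{i-1}2^{2i}(2^{2i}-1)B_{2i}w^{2i-1}/(2i)!$ together with the Laurent expansion of $\sin^{-d}$. This is where the $\rho$ coefficients and the powers $p^{2i}$ come from. Equating the residue at $0$ with $2S_{p,d}$ gives the formula; the constant term has no $p^0$ contribution, consistent with the sum starting at $i=1$.

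For $d$ odd, $\sin^{-d}$ is antiperiodic, so I would use $g(z)=p\sec(pz)$ instead, possibly after a substitution. The sign $(-1)^{(i-1)/2}$ arises from the residues of $\sec$, the Euler numbers from $\sec w=\sum(-1)^iE_{2i}w^{2i}/(2i)!$, and the extra $p^{1}$ factor explains the odd powers. The pole at $z=\pi/2$ (for $i=p$, $i$ odd) contributes the boundary term $(-1)^r$.

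The main obstacle is getting the periodicity and antiperiodicity and the strip choice right in the odd case, and pinning down the exact constants: signs, the factor $2$, and the $(-1)^r$ term. I would verify the normalization at small cases, for example $d=1,2$ with $p=3$, before trusting the general bookkeeping.
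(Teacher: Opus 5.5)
Your route is the paper's. The paper obtains the $S_{p,d}$ and $T_{p,d}$ identities by the same residue argument with $\tan(pz)\sin^{-d}z$ and $\sec(pz)\sin^{-d}z$ over a period strip, reads off the $z^{-1}$ coefficient, and substitutes into the trigonometric formula for $b_{p,d}-1$. Your odd-case identity $2T_{p,d}+(-1)^r=\sum_{i=0}^{(d-1)/2}\cdots$ is correct.

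\textbf{The even-case identity is false as stated.} Among the poles $i\pi/(2p)$ with $1\le i\le 2p-1$ odd, the reflection $z\mapsto\pi-z$ pairs $i$ with $2p-i$ but fixes $i=p$, i.e.\ $z=\pi/2$. That point is not a term of $S_{p,d}$, whose index stops at $i=p-2$. At $z=\pi/2$, $p\tan(pz)$ has residue $-1$ and $\sin^{-d}(\pi/2)=1$. So the poles away from $0$ contribute $-(2S_{p,d}+1)$, not $-2S_{p,d}$, and the correct identity is
\[
2S_{p,d}+1=\sum_{i=1}^{d/2}\frac{(-1)^{i-1}2^{2i}(2^{2i}-1)B_{2i}\rho_{d,d-2i}}{(2i)!}\,p^{2i}.
\]
You account for this central pole in the odd case, where it produces your $(-1)^r$, but you drop it in the even case. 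Your own proposed check exposes the slip: for $p=3$, $d=2$ one has $2S_{3,2}=8$, while the right-hand side is $p^2=9$.

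The slip matters. The numerator $2S_{p,d}-2S_{p,d-2}$ is blind to the constant. The denominator $p^d-2S_{p,d-2}-1$, however, becomes $p^d-\sum_{i=1}^{(d-2)/2}(\cdots)$ only with the corrected identity; with yours a stray $-1$ remains. Your remark that the $-1$ is absorbed actually presupposes the corrected version.

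Two smaller points:
\begin{enumerate}
\item The strip $0\le\operatorname{Re}z\le\pi$ puts the poles $0$ and $\pi$ of $\sin^{-d}z$ on the contour. Use $-\epsilon\le\operatorname{Re}z\le\pi-\epsilon$ with $0<\epsilon<\pi/(2p)$, as the paper does. Then exactly $0$ and the $i\pi/(2p)$ with $1\le i\le 2p-1$ odd are enclosed, and periodicity still cancels the vertical sides.
\item The identities are applied with exponent $d-2$, and the vanishing of the horizontal integrals needs exponent at least $1$. For exponent $0$, $S_{p,0}=(p-1)/2$ and the formula fails, which is why the statement assumes $d\ge4$ for even $d$ and $d\ge3$ for odd $d$.
\end{enumerate}
With these corrections your argument is the paper's and goes through.
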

Here $\rho_{d,i}$ is the coefficient of $z^i$ in the Taylor expansion of $z^d/\sin(z)^d$, $B_i,E_i$ are the Bernoulli number and the Euler number. 

A byproduct in the proof of this closed formula is a concrete expression of the Ehrhart polynomials discussed in \cite{PSSY}:

\begin{theorem}[See \Cref{4.8}]
For $d \geq 1$, let $F_d(n)$ and $E_d(n)$ be the Ehrhart polynomials of the $d$-dimensional Fibonacci and extended Fibonacci polytopes. Then we have:
\begin{align*}
F_{d}(\frac{p-3}{2})=\begin{cases}
\frac{1}{2^{d}p}(\sum_{i=1}^{(d+1)/2}\frac{(-1)^{i-1} 2^{2i} (2^{2i} - 1) B_{2i}\rho_{d+1,d-2i+1}p^{2i}}{(2i)!}\\-\sum_{i=1}^{(d-1)/2}\frac{(-1)^{i-1} 2^{2i} (2^{2i} - 1) B_{2i}\rho_{d-1,d-2i-1}p^{2i}}{(2i)!}) & d \textup{ odd}\\
\\
\frac{1}{2^{d}p}(\sum_{i=0}^{d/2}\frac{(-1)^i E_{2i}\rho_{d+1,d-2i}p^{2i+1}}{(2i)!}\\-\sum_{i=0}^{(d-2)/2}\frac{(-1)^i E_{2i}\rho_{d-1,d-2i-2}p^{2i+1}}{(2i)!}) & d \textup{ even}.
\end{cases}
\end{align*}
for $d \geq 2$ and
$$E_{d}(\frac{p-1}{2})=\begin{cases}
\frac{1}{p}(\sum_{i=1}^{(d+1)/2}\frac{(-1)^{i-1} 2^{2i} (2^{2i} - 1) B_{2i}\rho_{d+1,d-2i+1}p^{2i}}{(2i)!}) & d \textup{ odd}\\
\frac{1}{p}(\sum_{i=0}^{d/2}\frac{(-1)^i E_{2i}\rho_{d+1,d-2i}p^{2i+1}}{(2i)!}) & d \textup{ even}.
\end{cases}
$$
for $d \geq 1$.    
\end{theorem}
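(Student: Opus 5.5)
The statement is the last theorem above: the closed forms of $F_d(\frac{p-3}{2})$ and $E_d(\frac{p-1}{2})$. I would get it by comparing two computations of the same quantity. One is the Ehrhart/lattice-point description of $b_{p,d}$ from \cite{PSSY}. The other is the expression of $b_{p,d}$ through $\mu$ on $\Gamma_1$ given by \cite{PSSY}, Corollary 3.3, combined with the Gelfand-transform expression in terms of $S_{p,d},T_{p,d}$ and the polynomial formulas for $S_{p,d},T_{p,d}$ that underlie \Cref{4.7}. The plan is to compute $\mu((\delta_r+\delta_{r+1})^{d+1})$ and $\mu((\delta_{r+1}-\delta_r)^{d+1})$ in two ways.

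\textbf{Step 1 (combinatorial side).} In \cite{PSSY} the multiplicities $\mu((\delta_r+\delta_{r+1})^{n})$ and $\mu((\delta_{r+1}-\delta_r)^{n})$ are written as lattice-point counts. These counts come from decomposing $\delta_a\delta_b$ in $\Gamma_1$ and iterating. Up to the normalizations $2^{d}$ and $p$ they are the values $E_d(\frac{p-1}{2})$ and $F_d(\frac{p-3}{2})$. I would take these identities from \cite{PSSY}. The point to check is the indexing: the product of $d+1$ factors corresponds to a $d$-dimensional polytope.

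\textbf{Step 2 (analytic side).} Use the Gelfand transform of $\Gamma_1$ from \cite{Green}. The characters of $\Gamma_1$ send $\delta_n$ to $\sin(n\theta_j)/\sin(\theta_j)$ with $\theta_j=j\pi/p$, together with the character $\delta_n\mapsto n$ and the sign-type characters. Then $\mu$ of a product becomes an explicit linear combination of character values raised to the power $d+1$. This gives
$$\mu((\delta_r+\delta_{r+1})^{d+1})=\tfrac{1}{p}\bigl(p^{d+1}\cdot(\text{main term})+2S_{p,d+1}\text{ or }2T_{p,d+1}\bigr),$$
and similarly for the difference $\delta_{r+1}-\delta_r$. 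Here $\sin(\delta_r+\delta_{r+1})$ collapses via $\sin(r\theta)+\sin((r+1)\theta)=2\sin(p\theta/2)\cos(\theta/2)$, which produces the $\sin(i\pi/2p)^{-(d+1)}$ sums. Subtracting the $d-1$ version, as in the formula for $b_{p,d}-1$, accounts for the differences of $S$ (or $T$) appearing in the $F_d$ formula. The extended polytope $E_d$ corresponds to the undifferenced sum alone.

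\textbf{Step 3 (polynomial evaluation).} I would substitute the residue formulas: $S_{p,n}$ and $T_{p,n}+\frac12(-1)^r$ are polynomials in $p$. Concretely, $S_{p,n}=\sum_{i}\frac{(-1)^{i-1}2^{2i}(2^{2i}-1)B_{2i}\rho_{n,n-2i}}{(2i)!}p^{2i}/2$ for $n$ even, and $T_{p,n}$ is given by the Euler-number analogue for $n$ odd. Both come from expanding $z^n/\sin(z)^n$ against $\tan$ or $\sec$ at the poles. The parity switch between the cases ($d$ odd uses $B$, $d$ even uses $E$) then matches $n=d+1$. Finally, I would divide by the normalization $2^d p$ (resp.\ $p$).

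\textbf{Main obstacle.} The hardest step will be the constant bookkeeping. This includes the $\frac12(-1)^r$ correction in $T$, the sign $(-1)^{rd}$, and the contributions of the trivial character $\delta_n\mapsto n$ (which gives $p^{d}$-type terms) against the Ehrhart normalization. All of these must cancel exactly so that the right-hand side contains only the stated sums. I would fix them by checking small cases $d=1,2$ against known values $E_1(n)=2n+1$ and similar.

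Since both sides are polynomials in $p$ (Ehrhart polynomials in $\frac{p\mp1}{2}$), agreement at all odd primes $p$ suffices only as an identity at those infinitely many values. That already determines the polynomial, so the statement at $p$ extends formally as written.
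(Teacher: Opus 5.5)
Your outline follows the paper's proof: the lattice-point identities from \cite{PSSY}, the Gelfand transform of $\Gamma_1$, and the residue formulas of \Cref{4.4}. But this corollary is pure bookkeeping, and both identities you would splice together are misstated. The missing ingredient is the explicit expansion $\mu=\frac{1}{p}(\sum_{i=1}^{p-1}(1+\cos\frac{i\pi}{p})s_i+s_p)$. You only say that $\mu$ ``becomes an explicit linear combination'' of species, yet these weights produce every index shift. (There are no extra ``sign-type'' species; $s_1,\dots,s_p$ are all of them.)

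For $\delta_r+\delta_{r+1}$ only odd $i$ survive, with $s_i(\delta_r+\delta_{r+1})=(-1)^{(i-1)/2}\sin(\frac{i\pi}{2p})^{-1}$, and $1+\cos\frac{i\pi}{p}=2-2\sin^2\frac{i\pi}{2p}$. Hence
\[
\mu((\delta_r+\delta_{r+1})^{n})=p^{n-1}+\frac{1}{p}(2X_{p,n}-2X_{p,n-2}),\qquad X=S \text{ ($n$ even)},\ X=T \text{ ($n$ odd)}.
\]
For example, $p=3$, $n=2$ gives $\mu=5=3+\frac13(8-2)$. So the difference of $S$'s (or $T$'s) is intrinsic to a single power. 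Your displayed formula drops the $-2X_{p,d-1}$ term, and ``subtracting the $d-1$ version'' would instead introduce $p^{d-2}$ and $X_{p,d-3}$.

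With $n=d+1$ and $\mu((\delta_r+\delta_{r+1})^{d+1})-p^d=2^dF_d(\frac{p-3}{2})$, the constants of \Cref{4.4} cancel in the difference, but only when $d-1\ge 1$. This is where the hypothesis $d\ge2$ comes from, because $S_{p,0}=\frac{p-1}{2}$ is not given by the residue formula. In fact the $F$-formula is false at $d=1$: it gives $\frac p2$ instead of $F_1(\frac{p-3}{2})=\frac{p-1}{2}$. Calibrating constants on $d=1$, as you propose, would therefore mislead you.

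For $E_d$, your claim that $d+1$ factors correspond to a $d$-dimensional polytope is wrong. The relevant identity is $E_d(\frac{p-1}{2})=(-1)^{r(d+3)}\mu((\delta_{r+1}-\delta_r)^{d+3})$. Here only even $i$ survive, with $s_i(\delta_{r+1}-\delta_r)=(-1)^{i/2}\cos(\frac{i\pi}{2p})^{-1}$ and $s_p(\delta_{r+1}-\delta_r)=1$. After reindexing $i\mapsto p-i$, the weight becomes $1-\cos\frac{i\pi}{p}=2\sin^2\frac{i\pi}{2p}$, which lowers the exponent by two:
\[
(-1)^{rn}\mu((\delta_{r+1}-\delta_r)^{n})=\frac{1}{p}(2X_{p,n-2}+(-1)^{rn}).
\]
With $n=d+3$ this is $\frac1p(2S_{p,d+1}+1)$ or $\frac1p(2T_{p,d+1}+(-1)^r)$. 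By \Cref{4.4} it equals exactly the residue at $0$ of $\tan(pz)/\sin^{d+1}z$, respectively $\sec(pz)/\sin^{d+1}z$, for every $d\ge 1$. The constants $-\frac12$ and $-\frac12(-1)^r$ there come from the pole at $z=\pi/2$.

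So the constant cancellation you call the main obstacle is automatic once the weights are right. Checking small cases could not prove it in any case. With only $d+1$ factors you would land on $E_{d-2}$ rather than $E_d$.
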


\subsection{Outline of the paper} This paper consists of 5 sections. Section 2 is devoted to the isomorphism theorem of Green ring $\Gamma_e$, along with a description of the behavior of the dimension function and the number-of-generator function on representations under the isomorphism. Section 3 is devoted to the computation of the Hilbert-Kunz function and Hilbert-Kunz multiplicity of $Q_{p,d}$. In Section 4, we apply Gelfand transform to give the analytical expression of $e_{HK}(Q_{p,d})$ in terms of trigonometric sums. Finally in Section 5, we give estimates of the sums, and prove Yoshida's conjecture.

\section{The Green ring of a cyclic $p$-group}
In this paper, $p$ will be a prime number, $\mathbbm{k}$ will be a field of characteristic $p$. When $p$ is odd, we set $r=(p-1)/2$.

\subsection{$\mathbbm{k}$-object and modular representations}
We first recall the definition of $\mathbbm{k}$-object introduced in \cite{HM93}.

A \textbf{$\mathbbm{k}$-object} is a finitely generated \(\mathbbm{k}[T]\)-module on which \(T\) acts nilpotently. \(\Gamma\) is the group given by the free abelian group over isomorphic classes $[M]$ of $\mathbbm{k}$-objects $M$ modulo the relation $[M\oplus N]-[M]-[N]$. We introduce a product on \(\Gamma\) as follows: if two elements of \(\Gamma\) are represented by \(\mathbbm{k}\)-objects \(M\) and \(N\), then their product is the image in \(\Gamma\) of the \(\mathbbm{k}\)-object \(M \otimes_{\mathbbm{k}} N\), where \(T\) acts distributively; namely \(T(m \otimes n) = (Tm) \otimes n + m \otimes (Tn)\). Now \(\Gamma\) endowed with this product is a commutative ring, called the \textbf{representation ring}. The zero and unity of \(\Gamma\) are respectively the images of the zero module and \(\mathbbm{k}[T]/(T)\) in \(\Gamma\).

For any nonnegative integer \(n\), \(\delta_n\) is the image of \(M_n = \mathbbm{k}[T]/(T^n)\) in \(\Gamma\) (so in particular \(\delta_0 = 0\) and \(\delta_1 = 1\)). The theory of modules over principal ideal domains shows that \((\Gamma, +)\) is a free abelian group with basis \(\{\delta_1, \delta_2, \dots\}\). In what follows we shall also use a second basis \(\{\lambda_0, \lambda_1, \dots\}\), where \(\lambda_n = (-1)^n(\delta_{n+1} - \delta_n)\) and $\delta_i=\lambda_0-\lambda_1+\ldots+(-1)^{i-1}\lambda_{i-1}$. The transition between $\lambda$-basis and $\delta$-basis will become essential in later computations.

\begin{definition}
For $e \in \mathbb{N}$, define
$$\Gamma_e=\sum_{1 \leq i \leq p^e}\mathbb{Z}\delta_i \subset \Gamma,\Gamma_e(\mathbb{C})=\Gamma_e\otimes_\mathbb{Z}\mathbb{C} \subset \Gamma\otimes_{\mathbb{Z}}\mathbb{C}.$$
\end{definition}
We see $\Gamma_e,e \in \mathbb{N}$ is actually a filtration of subrings of $\Gamma$. Each $\Gamma_e$ has two bases $\delta_i,1 \leq i \leq p^e$ and $\lambda_i,0 \leq i \leq p^e-1$. These two bases are also $\mathbb{C}$-bases of $\Gamma_e(\mathbb{C})$. We will mainly work with the group $\Gamma_e(\mathbb{C})$ with $\mathbb{C}$-coefficients in this paper.

Each $\delta_i \in \Gamma_e$ corresponds to a $\mathbbm{k}[T]/(T^{p^e})$-module, which becomes a $\mathbb{Z}/p^e\mathbb{Z}$-representation under ring isomorphism $\mathbbm{k}[\mathbb{Z}/p^e\mathbb{Z}]\cong\mathbbm{k}[T]/(T^{p^e})$. A generator of $\mathbb{Z}/p^e\mathbb{Z}$ acts on the $\mathbbm{k}$-object via $1+T$. In this sense, $\delta_i$ is the unique $i$-dimensional indecomposable representation for $1 \leq i \leq p$. Now we have two products on $\Gamma_e$, namely the product of $\mathbbm{k}$-objects and tensor product of representations. We have the following result. 
\begin{theorem}[\cite{MengBenson}, Theorem 2.12]\label{2.2}
The product as $\mathbbm{k}$-object is isomorphic to the tensor product of representations. Therefore, $\Gamma_e$ is the Green ring of $\mathbb{Z}/p^e\mathbb{Z}$.    
\end{theorem}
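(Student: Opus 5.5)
The plan is to compare the two $\mathbbm{k}[T]$-module structures on $M\otimes_{\mathbbm{k}}N$ directly, for $\mathbbm{k}$-objects $M,N$ killed by $T^{p^e}$. In the $\mathbbm{k}$-object product, $T$ acts by $D=T\otimes 1+1\otimes T$. In the group representation, the generator $g$ of $\mathbb{Z}/p^e\mathbb{Z}$ acts by $(1+T)\otimes(1+T)$, so the nilpotent operator is
$$D'=(1+T)\otimes(1+T)-1=T\otimes 1+1\otimes T+T\otimes T.$$
The isomorphism class of a $\mathbbm{k}$-object is determined by the Jordan type of its nilpotent operator. It therefore suffices to show that $D$ and $D'$ have the same Jordan type on $M\otimes N$. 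By additivity we may reduce to $M=M_a$ and $N=M_b$ with $a,b\le p^e$.

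The key step is to write $D'$ as a change of variables applied to $D$. Set $A=T\otimes1$ and $B=1\otimes T$, which are commuting nilpotent operators. Then $D=A+B$ and $D'=A+B+AB$. I would look for an automorphism of the commuting nilpotent pair that intertwines the two. One natural choice is to replace $T$ on each factor by $\log(1+T)$, which makes $D'$ correspond to $\log(1+A)+\log(1+B)$, a sum. However, $\log$ is unavailable in characteristic $p$ once exponents reach $p$, so this route fails for $e\ge1$ beyond small sizes.

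Instead I would argue through the ranks of powers, since the Jordan type of a nilpotent operator $X$ is determined by $\operatorname{rank}X^k$ for all $k$. The idea is to show that $(A+B+AB)^k$ and $(A+B)^k$ generate the same ideal of the commutative algebra $\mathbbm{k}[A,B]/(A^a,B^b)$, up to a unit. Since $D'=D(1+u)$ would require $AB\in(A+B)$, which is false in general, I would instead compare $D'$ with $D$ using the structure of the commutative local algebra. Both $D$ and $D'$ lie in the maximal ideal $\mathfrak{n}=(A,B)$ and are congruent to $A+B$ modulo $\mathfrak{n}^2$. The goal is then an algebra automorphism $\sigma$ of $R=\mathbbm{k}[A,B]/(A^a,B^b)$ with $\sigma(D)=D'$. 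Such a $\sigma$ would give an isomorphism of $R$-modules $R$ twisted by $\sigma$, and therefore an isomorphism of $\mathbbm{k}[T]$-modules between $(R,D)$ and $(R,D')$. This is because $M_a\otimes M_b\cong R$ as a $\mathbbm{k}[A,B]$-module.

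Concretely, I would try $\sigma(A)=A$ and $\sigma(B)=B(1+A)$. This preserves the ideal $(A^a,B^b)$, since $B^b(1+A)^b\in(B^b)$, and it is invertible because $1+A$ is a unit. It also satisfies $\sigma(A+B)=A+B+AB=D'$. So $\sigma$ is an automorphism of $R$ carrying $D$ to $D'$, and multiplication by $D$ on $R$ is conjugate via $\sigma$ to multiplication by $D'$. This gives the Jordan type equality, so the products agree and $\Gamma_e$ is the Green ring.

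The main obstacle I anticipated was finding this intertwining automorphism, and I would check carefully that $\sigma$ is well defined on the quotient, namely that it maps the ideal $(A^a,B^b)$ into itself and that its inverse does too. Once that holds, the closure of $\Gamma_e$ under the product follows from the representation side.
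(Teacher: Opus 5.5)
Your proposal is correct and is essentially the paper's proof. After reducing to cyclic modules, the paper uses the same automorphism $T_1\mapsto T_1$, $T_2\mapsto (1+T_1)T_2$ of $\mathbbm{k}[T_1,T_2]/(T_1^{p^e},T_2^{p^e})$, notes that it preserves $(T_1^a,T_2^b)$, and uses it to intertwine $T_1+T_2$ with $T_1+T_2+T_1T_2$; the exploratory detours through $\log$ and ranks of powers can simply be dropped.
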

We sketch a proof in \cite{Meng89} and \cite{MengBenson} here for reader's convenience.
\begin{proof}
Suppose $M_1$, $M_2$ are two $\mathbbm{k}$-objects in $\Gamma_e$. View them as $\mathbb{Z}/p^e\mathbb{Z}$-representations. Let $M_1$ be a $k[T_1]/(T_1^{p^e})$-module, $M_2$ be a $k[T_2]/(T_2^{p^e})$-module, then $M_1\otimes_k M_2$ is a $k[T_1,T_2]/(T_1^{p^e},T_2^{p^e})$-module. Let $\varphi,\psi: k[T]/(T^{p^e}) \to k[T_1,T_2]/(T_1^{p^e},T_2^{p^e})$ be ring homomorphisms such that $\varphi(T)=T_1+T_2$ and $\psi(T)=T_1+T_2+T_1T_2$, then one can verify that the tensor product as $k$-object is $\varphi_*(M_1\otimes_k M_2)$ and as representation is $\psi_*(M_1\otimes_k M_2)$. Since $k[T_1],k[T_2]$ are PID's, it suffices to verify when $M_1,M_2$ are cyclic. In this case $M_1=k[T_1]/(T_1^a), M_2=k[T_2]/(T_2^b), M_1\otimes_kM_2=k[T_1,T_2]/(T_1^a,T_2^b)$. However, there is an automorphism $\rho$ of the ring $k[T_1,T_2]/(T_1^{p^e},T_2^{p^e})$ sending $T_1$ to $T_1$ and $T_2$ to $(1+T_1)T_2$. In this case $\rho\varphi=\psi$. Moreover, $(T_1^a,T_2^b)$ is stable under the action of $\rho$. So $\rho_*$ induces an isomorphism between $\varphi_*(M_1\otimes_k M_2)$ and $\psi_*(M_1\otimes_k M_2)$.
\end{proof}

Here the Green ring of a finite group $G$ is just the free abelian group over isomorphic classes of indecomposable modules. Its addition corresponds to direct sums of modules and multiplication corresponds to the tensor product of representations. 

\Cref{2.2} implies that the following computational results on the product of $\Gamma_e$ are equivalent:
\begin{enumerate}
\item From representation side, \cite{Green}\cite{Srinivasan64}\cite{Renaud79}\cite{BensonBanach}.
\item From commutative algebra side, \cite{HM93}\cite{TeixeiraII}.
\end{enumerate}

We will use the following two results on the product structure of $\Gamma_1$.
\begin{proposition}[\cite{HM93}, Theorem 2.5]\label[proposition]{2.3}
For $0\leq i \leq j<p$,
$$\lambda_i\lambda_j=\sum_{k=j-i}^{\min\{i+j,2p-2-i-j\}}\lambda_k.$$    
\end{proposition}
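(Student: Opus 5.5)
The plan is to reduce everything to multiplication by $\lambda_1$ and then induct on $i$. Throughout, the only structural input I use is the definition of the product of $\mathbbm{k}$-objects. There is symmetry in $i,j$, and $\lambda_0=\delta_1=1$ makes the case $i=0$ trivial (the sum is just $\lambda_j$).

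\emph{Step 1: multiplication by $\delta_2$.} Compute $\delta_2\delta_n$ for $1\le n\le p$ directly. The object is $M=\mathbbm{k}[T_1,T_2]/(T_1^2,T_2^n)$ with $T$ acting as $T_1+T_2$. Since $T_1^2=0$, we have $T^m=T_2^m+mT_1T_2^{m-1}$.

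For $n<p$, the element $1$ generates a $T$-cyclic submodule of dimension $n+1$, because $T^n=nT_1T_2^{n-1}\neq 0$ while $T^{n+1}=0$. Since $T^{n-1}$ has rank $2$, the rank sequence of $T^m$ forces Jordan type $(n+1,n-1)$. Hence
$$\delta_2\delta_n=\delta_{n+1}+\delta_{n-1}\qquad (n<p).$$
For $n=p$ we get $T^p=T_2^p+pT_1T_2^{p-1}=0$ while $T^{p-1}$ has rank $2$, so
$$\delta_2\delta_p=2\delta_p.$$

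\emph{Step 2: the $\lambda_1$-recursion.} Since $\lambda_1=\delta_1-\delta_2$, Step 1 gives $\lambda_1\delta_n=\delta_n-\delta_{n-1}-\delta_{n+1}$ for $n<p$, and $\lambda_1\delta_p=-\delta_p$. Substitute into $\lambda_1\lambda_j=(-1)^j(\lambda_1\delta_{j+1}-\lambda_1\delta_j)$ and rewrite in the $\lambda$-basis. This should give
$$\lambda_1\lambda_j=\lambda_{j-1}+\lambda_j+\lambda_{j+1}\quad (1\le j\le p-2),\qquad \lambda_1\lambda_{p-1}=\lambda_{p-2}.$$
These are exactly the case $i=1$ of the proposition: the upper limit $\min\{j+1,2p-3-j\}$ equals $j+1$ for $j\le p-2$ and equals $p-2$ for $j=p-1$.

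\emph{Step 3: induction on $i$.} For $1\le i\le p-2$, the recursion gives $\lambda_{i+1}=\lambda_1\lambda_i-\lambda_i-\lambda_{i-1}$. Assume the formula for $\lambda_{i'}\lambda_{j'}$ with $i'\le i$ and all admissible $j'$, and take $j\ge i+1$. Then
$$\lambda_{i+1}\lambda_j=\lambda_1(\lambda_i\lambda_j)-\lambda_i\lambda_j-\lambda_{i-1}\lambda_j.$$
Expand $\lambda_i\lambda_j=\sum_{k=j-i}^{m}\lambda_k$ with $m=\min\{i+j,2p-2-i-j\}$, and apply Step 2 termwise. The resulting sum of triples $\lambda_{k-1}+\lambda_k+\lambda_{k+1}$ telescopes against the two subtracted sums. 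What should survive is $\sum_{k=j-i-1}^{m'}\lambda_k$ with $m'=\min\{i+j+1,2p-3-i-j\}$.

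\emph{Main obstacle.} I expect the bookkeeping at the top end, where the truncation at $2p-2-i-j$ takes over, to be the hardest part. This happens exactly when the sum reaches index $p-1$, where Step 2 produces only $\lambda_{p-2}$ instead of three terms. I would split into the cases $i+j\le p-2$ and $i+j\ge p-1$, and check separately that in the second case the missing terms $\lambda_p,\lambda_{p+1}$ do not appear and the top index drops by one. The bottom end $k=j-i\ge 1$ is harmless because $j>i$ (the case $j=i$ can be handled by symmetry from $\lambda_i\lambda_{i+1}$-type computations, or directly, with $\lambda_1\lambda_0=\lambda_1$).
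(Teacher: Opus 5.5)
Your argument is correct. The paper gives no proof of this proposition: it quotes Han--Monsky, and notes after Theorem 2.2 that, through the identification of $\Gamma_1$ with the Green ring of $\mathbb{Z}/p\mathbb{Z}$, the formula is equivalent to the classical tensor-product decompositions of Green, Srinivasan and Renaud. Your route uses only the definition of the product on $\Gamma$:
\begin{itemize}
\item compute $\delta_2\delta_n$ by hand;
\item deduce $\lambda_1\lambda_j=\lambda_{j-1}+\lambda_j+\lambda_{j+1}$, together with the boundary rule $\lambda_1\lambda_{p-1}=\lambda_{p-2}$;
\item run the Chebyshev-type recursion $\lambda_{i+1}=\lambda_1\lambda_i-\lambda_i-\lambda_{i-1}$.
\end{itemize}
The citation buys brevity. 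Your route buys a self-contained derivation, at the cost of index bookkeeping you must do by hand.

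Three points to tighten.

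(i) In Step 1, the facts you list do not determine the Jordan type: a block of size $n+1$, $T^{n+1}=0$ and $\operatorname{rank}T^{n-1}=2$. For $n=3$, both $(4,2)$ and $(4,1,1)$ are consistent with them. Add that the number of blocks is $\dim M/TM=\dim \mathbbm{k}[T_1]/(T_1^2)=2$ for $n\ge 2$.

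(ii) No separate diagonal case is needed. Your step with $j\ge i+1$ includes $j=i+1$, which produces $\lambda_{i+1}^2$. Also, $j-i\ge 1$ guarantees that $\lambda_1\lambda_0$ never occurs when you apply Step 2 termwise.

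(iii) The boundary rule $\lambda_1\lambda_{p-1}=\lambda_{p-2}$ is used only when $i+j=p-1$. When $i+j\ge p$, the top index $m=2p-2-i-j$ is at most $p-2$, so the sum never reaches $p-1$. There the drop in the top index comes from the induction hypothesis, because $\lambda_{i-1}\lambda_j$ then has top index $m+1$. Explicitly, set $a=j-i$ and suppose $m\le p-2$. Then
\[
\lambda_1(\lambda_i\lambda_j)-\lambda_i\lambda_j=\sum_{k=a-1}^{m-1}\lambda_k+\sum_{k=a+1}^{m+1}\lambda_k .
\]
\begin{itemize}
\item If $i+j\le p-2$, then $\lambda_{i-1}\lambda_j=\sum_{k=a+1}^{m-1}\lambda_k$, and subtracting leaves $\sum_{k=a-1}^{m+1}\lambda_k$.
\item If $i+j\ge p$, then $\lambda_{i-1}\lambda_j=\sum_{k=a+1}^{m+1}\lambda_k$, and subtracting leaves $\sum_{k=a-1}^{m-1}\lambda_k$.
\item If $i+j=p-1$, the same computation with the boundary rule gives $\sum_{k=a-1}^{p-3}\lambda_k+\sum_{k=a+1}^{p-2}\lambda_k+\lambda_{p-2}$. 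Subtracting $\lambda_{i-1}\lambda_j=\sum_{k=a+1}^{p-2}\lambda_k$ leaves $\sum_{k=a-1}^{p-2}\lambda_k$.
\end{itemize}
In particular, no $\lambda_{p+1}$ ever arises.
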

\begin{proposition}[\cite{HM93}, Theorem 3.4]\label[proposition]{2.4}
For any $0 \leq i<p$ and $j \in \mathbb{N}$, we have:
\begin{enumerate}
\item $\lambda_i\lambda_{pj}=\lambda_{pj+i}$;
\item if $j\neq 0$, $\lambda_i\lambda_{pj-1}=\lambda_{pj-1-i}$.
\end{enumerate}
\end{proposition}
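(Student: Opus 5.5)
The plan is to prove both identities by computing products of the $\delta$'s, that is, Jordan types of tensor products of $\mathbbm{k}$-objects, and then converting to the $\lambda$-basis. Write $a=i+1\le p$. Since $\lambda_i=(-1)^i(\delta_{i+1}-\delta_i)$, it suffices to know $\delta_a\delta_n$ for $a\le p$ and $n$ near a multiple of $p$. Concretely we need $n\in\{pj-1,\,pj,\,pj+1\}$, and then for both parts of \Cref{2.4} we subtract consecutive values of $a$.

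\textbf{Key step (reduction via Frobenius).} Put $A=\mathbbm{k}[T_1,T_2]/(T_1^a,T_2^n)$, on which $T$ acts as $T_1+T_2$. Because $a\le p$ we have $T^p=(T_1+T_2)^p=T_1^p+T_2^p=T_2^p$. Set $S=T_2^p$.

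First treat $n=pj$. Here $A$ is a free $\mathbbm{k}[S]/(S^j)$-module, with basis the monomials $T_1^\alpha T_2^\beta$ for $\alpha<a$, $\beta<p$. Moreover $T$ commutes with $S$, so
$$A\cong \bigl(\mathbbm{k}[T_1,T_2]/(T_1^a,T_2^p)\bigr)\otimes_{\mathbbm{k}}\mathbbm{k}[S]/(S^j),$$
where $T$ acts on the first factor and $T^p$ acts as $S$ on the second.

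The first factor is $\delta_a\delta_p$. By \Cref{2.3} with $\lambda_{p-1}$, or directly, this equals $a\,\delta_p$, so the factor is free over $\mathbbm{k}[T]/(T^p)$. A free $\mathbbm{k}[T]/(T^p)$-module tensored as above with $\mathbbm{k}[S]/(S^j)$, where $S=T^p$, is a direct sum of copies of $\mathbbm{k}[T]/(T^{pj})$. Hence
$$\delta_a\delta_{pj}=a\,\delta_{pj}.$$

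\textbf{The cases $n=pj\pm1$.} Here I will compute the Jordan type through ranks. The number of Jordan blocks of size $\ge k$ equals $\operatorname{rank}T^{k-1}-\operatorname{rank}T^k$. Write $k=pm+s$ with $0\le s<p$, so that $T^k=S^mT^s$. The ranks of $S^mT^s$ on $A$ can then be computed using the $S$-adic filtration. Its graded pieces are $\mathbbm{k}[T]/(T^p)$-modules of type $\delta_a\delta_p$, together with a single boundary piece of type $\delta_a\delta_{1}$ (for $n=pj+1$) or $\delta_a\delta_{p-1}$ (for $n=pj-1$). The boundary piece is known from \Cref{2.3}.

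I expect to obtain
$$\delta_a\delta_{pj+1}=(a-1)\delta_{pj}+\delta_{pj+a},\qquad \delta_a\delta_{pj-1}=(a-1)\delta_{pj}+\delta_{pj-a}.$$
These match the $j=1$ instance of \Cref{2.3} up to the shift by the free part. The main obstacle is this rank bookkeeping: the filtration need not split, so I would first try to exhibit explicit cyclic generators. A natural candidate is $T_2^{pj}$-type leading monomials, with $T_1$-corrections, generating blocks of sizes $pj\pm a$ and $pj$, followed by a dimension count to confirm the decomposition.

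\textbf{Assembling.} For part (1), substituting into $\lambda_i\lambda_{pj}=(-1)^{i+pj}(\delta_{i+1}-\delta_i)(\delta_{pj+1}-\delta_{pj})$ gives
$$(-1)^{i+pj}\bigl(\delta_{pj+i+1}-\delta_{pj+i}\bigr)=\lambda_{pj+i},$$
since the free terms $(a-1)\delta_{pj}$ and $a\delta_{pj}$ telescope away. Part (2) is identical, using $\lambda_{pj-1}=(-1)^{pj-1}(\delta_{pj}-\delta_{pj-1})$ and the formula for $\delta_a\delta_{pj-1}$. The result is $(-1)^{pj-1-i}(\delta_{pj-i}-\delta_{pj-i-1})=\lambda_{pj-1-i}$. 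Here $j\ne0$ is needed so that $pj-1\ge0$.
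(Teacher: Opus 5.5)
\textbf{Verdict: there is a genuine gap, in the cases $n=pj\pm1$.} The parts you did carry out are fine. The reduction to products $\delta_a\delta_n$ and the telescoping in your assembling step are correct, including the value $a=i=0$, where both claimed formulas give $0$. The conclusion $\delta_a\delta_{pj}=a\,\delta_{pj}$ is also correct. However, the isomorphism you state for it, with $T$ ``acting on the first factor'', is not literally right. On $\mathbbm{k}[T_1,T_2]/(T_1^a,T_2^p)$ one has $T^p=0$, while on $A$ the operator $T$ sends $T_1^\alpha T_2^{p-1}$ to $T_1^{\alpha+1}T_2^{p-1}+T_1^\alpha S$, which mixes the factors. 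A cleaner argument for $j\ge1$: $A/TA\cong\mathbbm{k}[T_1]/(T_1^a)$ gives exactly $a$ Jordan blocks, $T^{pj}=T_2^{pj}=0$ bounds their sizes by $pj$, and $\dim A=apj$.

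The cases $n=pj\pm1$ carry essentially all the content of the proposition, and there you only write down the formulas you expect. The tool you propose cannot produce them. The $S$-adic filtration only gives the Jordan types of the graded pieces, and ranks of powers of $T$ are not determined by an associated graded that does not split, as you observe yourself. The ``explicit cyclic generators'' are never exhibited, so as it stands the proof is incomplete. (The paper gives no proof here either; it simply cites \cite{HM93}.)

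Both missing formulas have short proofs for $j\ge1$; the case $j=0$ is trivial since $\lambda_0=1$.

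\emph{Case $n=pj+1$.} Again $A/TA\cong\mathbbm{k}[T_1]/(T_1^a)$, so there are $a$ blocks. On $A$ one has $T^{pj+k}=T_2^{pj}(T_1+T_2)^k=T_1^kT_2^{pj}$. Its image is spanned by the $T_1^\gamma T_2^{pj}$ with $k\le\gamma<a$, so $\operatorname{rank}T^{pj+k}=a-k$ for $0\le k\le a$. Hence exactly one block has size $>pj$, and its size is $pj+a$. The remaining $a-1$ blocks have size $\le pj$ and total dimension $a(pj+1)-(pj+a)=(a-1)pj$, so each has size exactly $pj$.

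\emph{Case $n=pj-1$.} Let $R=\mathbbm{k}[T]/(T^{pj})$ and $F=\mathbbm{k}[T_1,T_2]/(T_1^a,T_2^{pj})$, so $F\cong R^a$ by the case $n=pj$. Then $A=F/Rx$ with $x=T_2^{pj-1}$. Since $T^kx=T_1^kx$, we get $\operatorname{ann}_R(x)=(T^a)$. In the free module $F$ this forces $x=T^{pj-a}y$ with $y\notin TF$, since otherwise $T^{a-1}x=0$. So $y$ extends to an $R$-basis of $F$, and $A\cong R/(T^{pj-a})\oplus R^{a-1}$, that is, $\delta_a\delta_{pj-1}=(a-1)\delta_{pj}+\delta_{pj-a}$.

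With these two computations inserted, your assembling step completes the proof.
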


\subsection{Tensor product structure of $\Gamma_e(\mathbb{C})$}
For a nonnegative integer $n$, there is a unique finite $p$-adic expansion
$$n=a_0+a_1p+a_2p^2+\ldots+a_ip^i, 0 \leq a_0,\ldots,a_i<p.$$
In this case we write $n=(a_i,\ldots ,a_1,a_0)_p$. If each $a_i$ is represented with a single letter with indices, we omit the comma here and write $(a_i\ldots a_1a_0)_p$. We denote $i^*=p-1-i$ for $i \in \{0,1,\ldots,p-1\}$ and  
$$i^{\sigma(n)}=\begin{cases}
i & n \textup{ even}\\
i^* & n \textup{ odd.}
\end{cases}$$
We write $\sigma(a_i\ldots a_1a_0)=\sigma((a_i\ldots a_1a_0)_p)$.
\begin{definition}
Let $\theta:\Gamma \to \Gamma$ be the map
$$\lambda_i \mapsto \begin{cases}
\lambda_{pi} & i \textup{ even}\\
\lambda_{pi+p-1} & i \textup{ odd.}
\end{cases}$$
\end{definition}
\begin{theorem}[\cite{TeixeiraII}, Theorem 2.13]
$\theta$ is a ring homomorphism.    
\end{theorem}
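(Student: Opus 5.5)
The plan is to prove multiplicativity on the $\lambda$-basis, using two ingredients: a cleaner description of $\theta$, and one genuinely new product formula (Step 3), which I expect to be the main obstacle.

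\textbf{Step 1: unitality and a twist factor.} First, $\theta(\lambda_0)=\lambda_0=1$, so $\theta$ is unital and additive by definition. Set $\varepsilon=\lambda_{p-1}$. By \Cref{2.3} with $i=j=p-1$, the sum runs over $k$ from $0$ to $\min\{2p-2,0\}=0$, so $\varepsilon^2=\lambda_0=1$. By \Cref{2.4}(1) with $i=p-1$, we have $\lambda_{pi+p-1}=\varepsilon\lambda_{pi}$. Hence
$$\theta(\lambda_i)=\varepsilon^{i}\lambda_{pi}\quad\text{for all } i.$$
Since $\varepsilon^{i}\varepsilon^{j}=\varepsilon^{i+j}$, it suffices to study the additive map $\Phi:\lambda_i\mapsto\lambda_{pi}$. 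We must show $\Phi(\lambda_i)\Phi(\lambda_j)=\sum_k c_k\,\varepsilon^{i+j-k}\lambda_{pk}$ whenever $\lambda_i\lambda_j=\sum_k c_k\lambda_k$. In other words, $\Phi$ is a ring map up to the sign character $\lambda_k\mapsto\varepsilon^{k}$.

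\textbf{Step 2: a module-theoretic model for $\Phi$.} I would realize $\lambda_{pi}$ through Frobenius in characteristic $p$. For a $\mathbbm{k}$-object $N$ over $\mathbbm{k}[S]$, form the induced module $\mathrm{Ind}N=\mathbbm{k}[T]\otimes_{\mathbbm{k}[S]}N$ with $S=T^p$. This sends $\delta_n$ to $\delta_{pn}$, and therefore sends $\lambda_i$ to $(-1)^i(\delta_{pi+p}-\delta_{pi})$. That element is an alternating block sum of $\lambda_{pi},\dots,\lambda_{pi+p-1}$, which by \Cref{2.4} equals $\lambda_{pi}\cdot u$ for a fixed element $u\in\Gamma_1$ (namely $u=\sum_{k<p}(-1)^{k}\lambda_k$ up to sign, i.e.\ essentially $\delta_p$ twisted by parity).

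Because $(T_1+T_2)^p=T_1^p+T_2^p$ in characteristic $p$, the tensor product $\mathrm{Ind}N_1\otimes\mathrm{Ind}N_2$ is a $\mathbbm{k}[T_1,T_2]$-module free of rank $p^2$ over $\mathbbm{k}[S_1,S_2]$. Changing variables to $T_1$ and $T=T_1+T_2$, it should decompose as $\mathrm{Ind}(N_1\otimes N_2)$ tensored with a free $\mathbbm{k}[T_1]/(T_1^p)$-module. This would give
$$\mathrm{Ind}(x)\,\mathrm{Ind}(y)=\delta_p\cdot\mathrm{Ind}(xy).$$
Combining this with $\delta_p\lambda_k$, computed from \Cref{2.4}(2), should yield $\lambda_{pi}\lambda_{pj}=\sum_k c_k\,\varepsilon^{i+j-k}\lambda_{pk}$ after cancelling the common factor $u$.

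\textbf{Step 3: the obstacle, and the fallback.} The difficulty is that $u$ is a zero-divisor-like element, so cancelling it is not legitimate. I therefore expect to need the direct formula for $\lambda_{pi}\lambda_{pj}$, proved by induction on $i+j$.

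For the base layer $i,j<p$, I would compare \Cref{2.3} with the known product $\delta_{pi}\delta_{pj}$, namely $\min(i,j)\,p$ copies of free-type blocks. Concretely, I would verify $\lambda_{pi}\lambda_{pj}=\sum_{k=j-i}^{\min\{i+j,\,2p-2-i-j\}}\varepsilon^{i+j-k}\lambda_{pk}$ by expanding both sides in the $\delta$-basis. The right side uses $\lambda_{pk}$ and $\lambda_{pk+p-1}$, the left side uses $\delta_{pi}\delta_{pj}$ computed via \Cref{2.2} (tensor products of induced modules from $\mathbb{Z}/p^{e-1}$).

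For general $i,j$, I would write $i=pi'+i_0$ and use $\lambda_{pi}=\lambda_{p^2i'}\lambda_{pi_0}$ (the analogue of \Cref{2.4} one level up) to reduce to the digitwise case. Checking this sign bookkeeping with $\varepsilon$ is where I expect the real work.
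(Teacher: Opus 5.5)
\textbf{This is not a complete proof: the decisive computation in Step 3 is announced but never carried out.} The paper offers no argument of its own to compare with; it imports the result from \cite[Theorem 2.13]{TeixeiraII}.

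\textbf{Steps 1 and 2.} Step 1 is correct and is a useful reformulation. By \Cref{2.3}, $\varepsilon=\lambda_{p-1}$ satisfies $\varepsilon^2=1$. By \Cref{2.4}(1), $\theta(\lambda_i)=\varepsilon^i\lambda_{pi}$. The theorem then becomes $\lambda_{pi}\lambda_{pj}=\sum_k c_k\varepsilon^{i+j-k}\lambda_{pk}$.

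The Frobenius identity in Step 2 is also correct: $\delta_{pn}\delta_{pm}$ is $p$ copies of $\mathrm{Ind}(\delta_n\delta_m)$. Since $\delta_p\varepsilon=\ell(\varepsilon)\delta_p=\delta_p$, you have $\mathrm{Ind}=\delta_p\Phi=\delta_p\theta$. Consequently the identity yields only $\delta_p\bigl(\theta(x)\theta(y)-\theta(xy)\bigr)=0$. By the same argument it yields $\delta_p\bigl(\Phi(x)\Phi(y)-\Phi(xy)\bigr)=0$, and $\Phi$ is not multiplicative. For $p=3$, the Jordan types give $\delta_4^2=\delta_1+\delta_3+\delta_5+\delta_7$ and $\delta_3\delta_4=2\delta_3+\delta_6$, hence $\lambda_3^2=\lambda_0+\lambda_5+\lambda_6$, whereas $\Phi(\lambda_1^2)=\lambda_0+\lambda_3+\lambda_6$.

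The kernel of multiplication by $\delta_p$ is spanned by the $\lambda_{pm+b}-(-1)^b\lambda_{pm}$ for $1\le b\le p-1$. It therefore contains $(\varepsilon-1)\lambda_{pm}$, so Step 2 cannot see exactly the sign twist that the theorem is about. What Step 2 does give you is a reduction. That kernel is a complement to $\operatorname{span}\{\theta(\lambda_k)\}$, so the theorem is equivalent to this span being closed under multiplication. That is no easier.

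\textbf{Step 3, base case $i,j<p$.} Expanding $\lambda_{pi}\lambda_{pj}$ in the $\delta$-basis needs $\delta_{pi+1}\delta_{pj+1}$, $\delta_{pi+1}\delta_{pj}$ and $\delta_{pi}\delta_{pj+1}$, not only $\delta_{pi}\delta_{pj}$. The mixed terms follow from the projection formula $x\cdot\mathrm{Ind}(y)=\mathrm{Ind}(\mathrm{Res}(x)\,y)$. But $\delta_{pi+1}\delta_{pj+1}$ is a product of two non-induced modules. Its decomposition is exactly the kind of information computed in \cite{Green} and \cite{Renaud79}, which the theorem packages, and nothing in your outline supplies it.

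\textbf{Step 3, general $i,j$.} The reduction has its own gaps.
\begin{enumerate}
\item It uses \Cref{2.4} at level $p^2$, which is not available here.
\item When $i_0=j_0=0$, the factorization $\lambda_{pi}=\lambda_{p^2i'}\lambda_{pi_0}$ is vacuous. Then $\lambda_{pi}\lambda_{pj}=\lambda_{p^2i'}\lambda_{p^2j'}$ is the original problem again, so induction on $i+j$ does not descend.
\end{enumerate}
Since every $\theta^n$ must also be a ring map, any proof must in effect establish multiplicativity of $\theta^n$ on $\Gamma_1$ for every $n$. That means product formulas for the elements $\theta^n(\lambda_i)\in\{\lambda_{p^ni},\lambda_{p^ni+p^n-1}\}$, $0\le i<p$, at every level $p^n$. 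Your outline only addresses $n=1$ and gives no mechanism for passing to higher levels.

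As it stands, the proposal locates the difficulty correctly but does not prove the statement.
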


\begin{proposition}
Let $a=(a_{e-1}\ldots a_1a_0)_p$ be an integer between $0$ and $p^e-1$. Then we have
$$\lambda_a=\theta^{e-1}(\lambda_{a_{e-1}})\theta^{e-2}(\lambda_{a_{e-2}^{\sigma(a_{e-1})}})\ldots \theta(\lambda_{a_1^{\sigma(a_{e-1}\ldots a_2)}})\lambda_{a_0^{\sigma(a_{e-1}\ldots a_1)}}.$$
Moreover it is the unique equation of the form
$$\lambda_a=\theta^{e-1}(\lambda_{b_{e-1}})\theta^{e-2}(\lambda_{b_{e-2}})\ldots \theta(\lambda_{b_1})\lambda_{b_0}$$
where $b_{e-1},\ldots,b_0 \in \{0,1,\ldots,p-1\}$.
\end{proposition}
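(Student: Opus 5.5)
Existence will be proved by induction on $e$, and uniqueness by a counting argument.

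For $e=1$ the statement is trivial: $\lambda_a=\lambda_{a_0}$. For the inductive step, write $a=pm+a_0$ with $m=(a_{e-1}\ldots a_1)_p$, so $0\le m\le p^{e-1}-1$. The inductive hypothesis applied to $\lambda_m$ gives
$$\lambda_m=\theta^{e-2}(\lambda_{a_{e-1}})\theta^{e-3}(\lambda_{a_{e-2}^{\sigma(a_{e-1})}})\cdots\lambda_{a_1^{\sigma(a_{e-1}\ldots a_2)}}.$$
Since $\theta$ is a ring homomorphism (\cite{TeixeiraII}, Theorem 2.13), applying $\theta$ yields $\theta(\lambda_m)=\theta^{e-1}(\lambda_{a_{e-1}})\cdots\theta(\lambda_{a_1^{\sigma(a_{e-1}\ldots a_2)}})$. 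It then suffices to prove the key identity
$$\theta(\lambda_m)\lambda_{a_0^{\sigma(m)}}=\lambda_{pm+a_0},$$
noting that $\sigma(a_{e-1}\ldots a_1)=\sigma(m)$ by definition.

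The key identity follows from \Cref{2.4}, split by the parity of $m$.
\begin{enumerate}
\item If $m$ is even, then $\theta(\lambda_m)=\lambda_{pm}$, and by \Cref{2.4}(1), $\lambda_{a_0}\lambda_{pm}=\lambda_{pm+a_0}$.
\item If $m$ is odd, then $\theta(\lambda_m)=\lambda_{pm+p-1}=\lambda_{p(m+1)-1}$ with $m+1\neq 0$. By \Cref{2.4}(2), $\lambda_{a_0^*}\lambda_{p(m+1)-1}=\lambda_{p(m+1)-1-(p-1-a_0)}=\lambda_{pm+a_0}$.
\end{enumerate}
The inductive hypothesis needs $m<p^{e-1}$, which holds. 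One could also simply take $e=0$ digits as the base case. This completes existence.

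For uniqueness, consider the map from tuples $(b_{e-1},\ldots,b_0)\in\{0,\ldots,p-1\}^e$ to $\{0,\ldots,p^e-1\}$, sending a tuple to the index of the basis element equal to the product $\theta^{e-1}(\lambda_{b_{e-1}})\cdots\lambda_{b_0}$. By the same induction (the key identity read backwards), this product is always a single basis element $\lambda_n$ with $n=(c_{e-1}\ldots c_0)_p$, where $c_{e-1}=b_{e-1}$ and $c_i=b_i^{\sigma(c_{e-1}\ldots c_{i+1})}$.

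This map is injective. The digits $c_i$ are recovered successively from the $b_i$, and conversely each $b_i$ is determined by $c_i$ and the higher digits, because $i\mapsto i^*$ is an involution.

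Uniqueness of the representation follows from injectivity together with the fact that the $\lambda_n$ form a basis. If two tuples gave the same $\lambda_a$, they would give the same index $n=a$. Injectivity then forces the tuples to coincide.

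The step most likely to require care is the odd case of the key identity. There one must check that the index bookkeeping $p(m+1)-1-a_0^*=pm+a_0$ matches \Cref{2.4}(2), and that the parity used to decide between $a_0$ and $a_0^*$ is that of $m$ itself rather than of its top digit. Everything else is formal once it is settled that $\theta$ is a ring homomorphism.
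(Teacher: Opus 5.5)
Your proof is correct and follows the same route as the paper's: both use the multiplicativity of $\theta$ together with \Cref{2.4} to get $\theta(\lambda_m)\lambda_{j^{\sigma(m)}}=\lambda_{pm+j}$, and both assemble the product from the top digit down. For uniqueness, both compute an arbitrary product $\theta^{e-1}(\lambda_{b_{e-1}})\cdots\lambda_{b_0}$ as a single basis element $\lambda_n$ and invert the digit relations, using that $i\mapsto i^*$ is an involution. Your version simply writes out the induction that the paper sketches.
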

\begin{proof}
By \Cref{2.4} we see
$$\theta(\lambda_i)\lambda_{j^{\sigma(i)}}=\lambda_{pi+j},\theta(\lambda_i)\lambda_j=\lambda_{pi+j^{\sigma(i)}}.$$
So
\begin{align*}
\theta^2(\lambda_i)\theta(\lambda_{j^{\sigma(i)}})\lambda_{k^{\sigma(ij)}}=\theta(\lambda_{pi+j})\lambda_{k^{\sigma(ij)}}
=\lambda_{p^2i+pj+k},\ldots.
\end{align*}
Applying this equation inductively, we see the first equation holds. On the other hand we have
$$\theta^2(\lambda_i)\theta(\lambda_j)\lambda_k=\theta(\lambda_{pi+j^{\sigma(i)}})\lambda_k
=\lambda_{p^2i+pj^{\sigma(i)}+k^{\sigma(ij^{\sigma(i)})}},\ldots.$$
Inductively we see
$$\theta^{e-1}(\lambda_{b_{e-1}})\theta^{e-2}(\lambda_{b_{e-2}})\ldots \theta(\lambda_{b_1})\lambda_{b_0}=\lambda_a$$
if and only if
$$b_{e-1}=a_{e-1},b_{e-2}^{\sigma(b_{e-1})}=a_{e-2},b_{e-3}^{\sigma(b_{e-1}b_{e-2}^{\sigma(b_{e-1})})}=a_{e-3},\ldots$$
since $0 \leq a<p^e$ and $0 \leq b_i<p$, we can prove inductively that there is only one such solution of $b_i$'s to the above equation.
\end{proof}
\begin{theorem}\label{2.8}
We have an isomorphism of rings
$$\varphi_e:\Gamma_e \to \Gamma_1\otimes_\mathbb{Z}\Gamma_1\otimes_\mathbb{Z}\ldots \otimes_\mathbb{Z}\Gamma_1=\Gamma_1^{\otimes e}$$
such that $\varphi_e$ is $\mathbb{Z}$-linear and
$$\varphi_e(\lambda_a)=\lambda_{b_{e-1}}\otimes \lambda_{b_{e-2}}\otimes \ldots \otimes \lambda_{b_0}$$
whenever
$$\lambda_a=\theta^{e-1}(\lambda_{b_{e-1}})\theta^{e-2}(\lambda_{b_{e-2}})\ldots \theta(\lambda_{b_1})\lambda_{b_0}$$
holds. Its inverse is given by
$$\varphi_e^{-1}:w_1\otimes w_2 \otimes \ldots \otimes w_e \to \theta^{e-1}(w_1)\theta^{e-2}(w_2)\ldots w_e.$$
\end{theorem}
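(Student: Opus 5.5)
Rather than building $\varphi_e$ and then checking multiplicativity, I would define the map in the opposite direction,
$$\psi_e:\Gamma_1^{\otimes e}\to\Gamma_e,\qquad w_1\otimes\cdots\otimes w_e\mapsto \theta^{e-1}(w_1)\theta^{e-2}(w_2)\cdots\theta(w_{e-1})w_e,$$
prove that $\psi_e$ is a ring isomorphism, and then set $\varphi_e=\psi_e^{-1}$.

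\textbf{Step 1: $\psi_e$ is a well-defined ring homomorphism.} The assignment $(w_1,\dots,w_e)\mapsto \prod_k\theta^{e-k}(w_k)$ is $\mathbb{Z}$-multilinear, because $\theta$ is additive (it is $\mathbb{Z}$-linear by definition) and multiplication in $\Gamma$ is bilinear. Hence $\psi_e$ is a well-defined additive map on the tensor product. Each $\theta^{j}$ is a ring homomorphism by Teixeira's theorem quoted above. So $w\mapsto\theta^{e-k}(w)$ restricted to $\Gamma_1$ is a ring map $\Gamma_1\to\Gamma$, and it sends $1=\lambda_0$ to $\lambda_0=1$. Since $\Gamma$ is commutative, the universal property of the tensor product of commutative rings (the coproduct in commutative rings) shows that the product of these $e$ ring maps is a ring homomorphism $\Gamma_1^{\otimes e}\to\Gamma$. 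Concretely, one checks on pure tensors that $\psi_e(\otimes w_k\cdot\otimes w'_k)=\prod\theta^{e-k}(w_kw'_k)=\prod\theta^{e-k}(w_k)\prod\theta^{e-k}(w'_k)$.

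\textbf{Step 2: $\psi_e$ is a bijection onto $\Gamma_e$.} The group $\Gamma_1^{\otimes e}$ is free abelian on the $p^e$ elements $\lambda_{b_{e-1}}\otimes\cdots\otimes\lambda_{b_0}$ with $0\le b_i\le p-1$. By the preceding proposition, $\psi_e$ sends this basis element to some $\lambda_a$ with $0\le a\le p^e-1$; explicitly $a=(a_{e-1}\ldots a_0)_p$ with $a_{e-1}=b_{e-1}$ and $a_i=b_i^{\sigma(\cdots)}$. Let me make sure the image really lands in that range. Each successive factor $\theta(\lambda_m)\lambda_j$ equals $\lambda_{pm+j'}$, which stays below $p^{k+1}$, so this follows from the inductive computation in that proof. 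The map $(b_i)\mapsto a$ is injective, because the uniqueness part of the proposition says $a$ determines the $b_i$. It is then a bijection between two sets of size $p^e$, hence surjective. Therefore $\psi_e$ maps a $\mathbb{Z}$-basis bijectively onto the $\mathbb{Z}$-basis $\{\lambda_a\}_{0\le a<p^e}$ of $\Gamma_e$. So $\psi_e$ is an additive isomorphism onto $\Gamma_e$. Its image is closed under multiplication, which is consistent with $\Gamma_e$ being a subring.

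\textbf{Step 3: conclusion.} Put $\varphi_e=\psi_e^{-1}$. It is a ring isomorphism and $\mathbb{Z}$-linear. It sends $\lambda_a$ to $\lambda_{b_{e-1}}\otimes\cdots\otimes\lambda_{b_0}$ exactly when $\lambda_a=\theta^{e-1}(\lambda_{b_{e-1}})\cdots\lambda_{b_0}$, and such $b_i$ exist uniquely by the proposition, so $\varphi_e$ is well defined. Its inverse is the stated formula.

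The only point requiring care is the bookkeeping in Step 2 that the products land exactly on the basis $\lambda_0,\dots,\lambda_{p^e-1}$. Everything else is formal once $\theta$ is known to be a ring homomorphism.
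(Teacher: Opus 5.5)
Your proposal is correct and is essentially the paper's argument. The paper also notes that the map sends the basis $\{\lambda_a\}_{0\le a<p^e}$ bijectively to the pure-tensor basis (via the preceding proposition). It then checks multiplicativity of $\varphi_e^{-1}$ on pure tensors using that $\theta$ is a ring homomorphism. You simply build the map in the direction $\psi_e=\varphi_e^{-1}$ first and spell out the injectivity-plus-counting argument that the paper leaves implicit.
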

\begin{proof}
$\varphi_e$ maps a basis to a basis, so is an isomorphism of abelian groups. Now it suffices to prove $\varphi_e^{-1}$ is multiplicative. Let $w_1,\ldots,w_e,w_1',\ldots,w_e' \in \Gamma_1$. We see
\begin{align*}
\varphi_e^{-1}(\otimes_{1 \leq i \leq e} w_i\cdot \otimes_{1 \leq i \leq e} w_i')= \varphi_e^{-1}(\otimes_{1 \leq i \leq e} w_iw_i')\\
=\theta^{e-1}(w_1w_1')\theta^{e-2}(w_2w_2')\ldots w_ew_e'\\
=\theta^{e-1}(w_1)\theta^{e-2}(w_2)\ldots w_e\cdot\theta^{e-1}(w_1')\theta^{e-2}(w_2')\ldots w_e'\\
=\varphi_e^{-1}(\otimes_{1 \leq i \leq e} w_i)\cdot \varphi_e^{-1}(\otimes_{1 \leq i \leq e} w_i').
\end{align*}
So we are done.
\end{proof}
\begin{corollary}\label[corollary]{2.9}
There is an induced isomorphism
$$\Gamma_e(\mathbb{C}) \to \Gamma_1(\mathbb{C})\otimes_\mathbb{C}\otimes \Gamma_1(\mathbb{C})\otimes \ldots \otimes_\mathbb{C}\Gamma_1(\mathbb{C})=\Gamma_1(\mathbb{C})^{\otimes e}.$$
We still denote this map by $\varphi_e$.
\end{corollary}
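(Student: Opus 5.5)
The plan is to obtain the corollary from \Cref{2.8} by extending scalars from $\mathbb{Z}$ to $\mathbb{C}$. Since $\varphi_e:\Gamma_e\to\Gamma_1^{\otimes e}$ is an isomorphism of rings, so is
$$\varphi_e\otimes_\mathbb{Z}\mathrm{id}_\mathbb{C}:\Gamma_e\otimes_\mathbb{Z}\mathbb{C}\to \Gamma_1^{\otimes e}\otimes_\mathbb{Z}\mathbb{C},$$
because base change is a functor and therefore preserves isomorphisms. It is also multiplicative with respect to the induced $\mathbb{C}$-algebra structures. The left-hand side is $\Gamma_e(\mathbb{C})$ by definition.

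The remaining step is to identify the target with $\Gamma_1(\mathbb{C})^{\otimes_\mathbb{C} e}$. For this I will use the standard canonical isomorphism of $\mathbb{C}$-algebras
$$(A_1\otimes_\mathbb{Z}\cdots\otimes_\mathbb{Z} A_e)\otimes_\mathbb{Z}\mathbb{C}\cong (A_1\otimes_\mathbb{Z}\mathbb{C})\otimes_\mathbb{C}\cdots\otimes_\mathbb{C}(A_e\otimes_\mathbb{Z}\mathbb{C}),$$
given by $a_1\otimes\cdots\otimes a_e\otimes c\mapsto (a_1\otimes c)\otimes(a_2\otimes 1)\otimes\cdots\otimes(a_e\otimes 1)$. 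It is well defined and multiplicative by construction. To check bijectivity concretely, note that $\Gamma_1$ is free over $\mathbb{Z}$ with basis $\lambda_0,\ldots,\lambda_{p-1}$, and the map sends the $\mathbb{C}$-basis $\{\lambda_{b_1}\otimes\cdots\otimes\lambda_{b_e}\otimes 1\}$ bijectively onto the $\mathbb{C}$-basis $\{(\lambda_{b_1}\otimes1)\otimes\cdots\otimes(\lambda_{b_e}\otimes1)\}$.

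Composing the two isomorphisms gives the map of the corollary. Explicitly, it sends $\lambda_a$ to $\lambda_{b_{e-1}}\otimes\cdots\otimes\lambda_{b_0}$, and it is $\mathbb{C}$-linear. This justifies reusing the name $\varphi_e$.

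There is no real obstacle here; the only care needed is in the identification of the base-changed tensor product, which is handled by the basis argument above.
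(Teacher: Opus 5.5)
Your proposal is correct and matches the paper's argument. The paper gives no separate proof and treats the corollary as the map induced by $\varphi_e\otimes_\mathbb{Z}\mathrm{id}_\mathbb{C}$ together with the canonical identification $(\Gamma_1^{\otimes e})\otimes_\mathbb{Z}\mathbb{C}\cong\Gamma_1(\mathbb{C})^{\otimes_\mathbb{C} e}$; you make that identification explicit and check it via the $\lambda$-basis.
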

\begin{corollary}
For any $e$, $\Gamma_e(\mathbb{C})$ can be generated by a single element as a $\mathbb{C}$-algebra, and $\Gamma_e(\mathbb{Q})=\Gamma_e\otimes_\mathbb{Z}\mathbb{Q}$ can be generated by a single element as a $\mathbb{Q}$-algebra.    
\end{corollary}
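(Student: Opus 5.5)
By \Cref{2.9}, $\Gamma_e(\mathbb{C})\cong\Gamma_1(\mathbb{C})^{\otimes e}$, so it suffices to treat $\Gamma_1(\mathbb{C})^{\otimes e}$ and then transport back. The plan is to show that this algebra is reduced and finite dimensional, hence isomorphic to $\mathbb{C}^N$ for some $N$. A product of copies of an infinite field is generated by one element: take $x=(c_1,\ldots,c_N)$ with the $c_i$ pairwise distinct. Lagrange interpolation then produces every idempotent, and hence every element, as a polynomial in $x$.

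The key input is that $\Gamma_1(\mathbb{C})$ is semisimple, i.e.\ isomorphic to $\mathbb{C}^p$. I would get this from \Cref{2.3}, which shows that $\Gamma_1$ is generated as a ring by $\lambda_1$. Indeed, $\lambda_1\lambda_j=\lambda_{j-1}+\lambda_j+\lambda_{j+1}$ for $1\le j\le p-2$, so inductively $\lambda_{j+1}=\lambda_1\lambda_j-\lambda_j-\lambda_{j-1}$ is a polynomial in $\lambda_1$. Hence $\Gamma_1(\mathbb{C})\cong\mathbb{C}[X]/(f)$ with $\deg f=p$. Semisimplicity then amounts to $f$ having distinct roots. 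I would cite the result of \cite{Green} that the Green ring of $\mathbb{Z}/p\mathbb{Z}$ is semisimple (with $p$ distinct characters). Alternatively, one checks directly that $\lambda_1$ acts on the basis $\lambda_0,\ldots,\lambda_{p-1}$ by a symmetric tridiagonal matrix with nonzero off-diagonal entries. The off-diagonal entries are $1$, and the boundary rows are $\lambda_1\lambda_0=\lambda_1$ and, by \Cref{2.4}(2) with $j=1$, $\lambda_1\lambda_{p-1}=\lambda_{p-2}$. Such a Jacobi matrix has simple real eigenvalues, so the minimal polynomial of $\lambda_1$ has $p$ distinct roots. This distinct-roots check is the only real obstacle.

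Tensor products of products of copies of $\mathbb{C}$ over $\mathbb{C}$ are again of this form: $\mathbb{C}^p\otimes\cdots\otimes\mathbb{C}^p\cong\mathbb{C}^{p^e}$. Thus $\Gamma_e(\mathbb{C})\cong\mathbb{C}^{p^e}$, and the interpolation argument gives a single generator.

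For $\mathbb{Q}$ there are two options. The first is to note that the eigenvalues are real (the Jacobi matrix is symmetric), so one could pick rational separating combinations. The cleaner route is to take the generator over $\mathbb{C}$ from the start in the form $x=\sum_i t_i\,(1\otimes\cdots\otimes\lambda_1\otimes\cdots\otimes1)$ with $t_i\in\mathbb{Q}$, the $\lambda_1$ in the $i$-th slot. The characters of $\mathbb{C}^{p^e}$ are tuples $(\chi_1,\ldots,\chi_e)$ of characters of $\Gamma_1(\mathbb{C})$, and $x$ takes the value $\sum_i t_i\chi_i(\lambda_1)$ at the character indexed by $(\chi_1,\ldots,\chi_e)$. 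Distinct tuples have distinct value vectors $(\chi_i(\lambda_1))_i$, because the $\chi(\lambda_1)$ are distinct. Hence the set of $t\in\mathbb{Q}^e$ making two such values coincide is a finite union of proper hyperplanes, and a generic rational $t$ avoids it. Then $1,x,\ldots,x^{p^e-1}$ are $\mathbb{C}$-linearly independent in $\Gamma_e(\mathbb{C})$. Since they lie in $\Gamma_e(\mathbb{Q})$, they are $\mathbb{Q}$-independent there. As $\dim_\mathbb{Q}\Gamma_e(\mathbb{Q})=p^e$, they form a $\mathbb{Q}$-basis, so $x$, pulled back via $\varphi_e^{-1}$, generates $\Gamma_e(\mathbb{Q})$ as a $\mathbb{Q}$-algebra, and therefore also generates $\Gamma_e(\mathbb{C})$.
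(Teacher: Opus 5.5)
Your proof is correct and follows essentially the same route as the paper: identify $\Gamma_e(\mathbb{C})\cong\mathbb{C}^{p^e}$ via the tensor decomposition and semisimplicity of $\Gamma_1(\mathbb{C})$, take an element with pairwise distinct coordinates, and over $\mathbb{Q}$ choose it generically so that it avoids finitely many proper subspaces. Two of your details go beyond the paper, which leaves them implicit: the Jacobi-matrix check of simple eigenvalues for $\lambda_1$ is a self-contained substitute for the appeal to Green's semisimplicity result, and the Vandermonde and dimension count explains why $\mathbb{C}$-generation by a rational element gives $\mathbb{Q}$-generation.
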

\begin{proof}
We have $\Gamma_e(\mathbb{C})\cong \mathbb{C}^{p^e}$ as $\mathbb{C}$-algebra, and any element in $\Gamma_e(\mathbb{C})$ with pairwise distinct coordinates will generate $\Gamma_e(\mathbb{C})$ as a $\mathbb{C}$-algebra. Elements in $\Gamma_e(\mathbb{Q})$ such that at least two coordinates of $\varphi_e(v)$ coincide consist of a finite union of $\mathbb{Q}$-linear spaces which is not the full subspace. So we can always choose $v$ such that $\varphi_e(v)$ has pairwise distinct coordinates, and then it generates $\Gamma_e(\mathbb{Q})$ as a $\mathbb{Q}$-algebra. 
\end{proof}
\begin{remark}
The above statement is not true over $\mathbb{Z}$. Consider $p=2$ where $\Gamma_1=\mathbb{Z}[x]/(x^2-2x)$. Then $\Gamma_2=\mathbb{Z}[x,y]/(x^2-2x,y^2-2y)$ and $\Gamma_2\otimes_\mathbb{Z}\mathbb{F}_2=\mathbb{F}_2[x,y]/(x^2,y^2)$, which is not generated by a single element since any square of a non-unit is $0$ and any square of a unit is $1$. So $\Gamma_2$ is not generated by a single element.
\end{remark}
\subsection{Some functions on $\Gamma_e(\mathbb{C})$ and $\Gamma_1(\mathbb{C})^{\otimes e}$}
From now on, we will assume $p$ is an odd prime number.
\begin{definition}
\begin{enumerate}
\item $\mu: \Gamma \to \mathbb{C}$ is the linear map $\delta_i \mapsto 1$. Under this map, $\lambda_i \mapsto 0$ for $i \neq 0$ and $\lambda_0 \mapsto 1$. We will still denote its restriction on $\Gamma_e$ and the induced map on $\Gamma_e(\mathbb{C})$ by $\mu$ by abusing the notation. On the level of representations this represents the minimal number of generators of a representation.
\item $\ell: \Gamma \to \mathbb{C}$ is the linear map $\delta_i \mapsto i$. Under this map, $\lambda_i \mapsto (-1)^i$ for $i \geq 0$. We will still denote its restriction on $\Gamma_e$ and the induced map on $\Gamma_e(\mathbb{C})$ by $\ell$ by abusing the notation. On the level of representations this represents the dimension of a representation, which is also multiplicative.
\item for any implicitly determined $e$, $\tilde\mu=\mu^{\otimes e}: \Gamma_1(\mathbb{C})^{\otimes e} \to \mathbb{C}$.
\item for any implicitly determined $e$, $\tilde\ell=\ell^{\otimes e}: \Gamma_1(\mathbb{C})^{\otimes e} \to \mathbb{C}$.
\item We define $\Delta_e=\delta_p^{\otimes e} \in \Gamma_1(\mathbb{C})^{\otimes e}$.
\end{enumerate}
\end{definition}
\begin{proposition}
$\varphi_e(\delta_{p^e})=\Delta_e$.    
\end{proposition}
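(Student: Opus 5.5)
We want $\varphi_e(\delta_{p^e})=\delta_p^{\otimes e}$. Equivalently, $\varphi_e^{-1}(\delta_p^{\otimes e})=\delta_{p^e}$, i.e.
$$\theta^{e-1}(\delta_p)\theta^{e-2}(\delta_p)\cdots\theta(\delta_p)\delta_p=\delta_{p^e}$$
by \Cref{2.8}. The plan is induction on $e$, with $e=1$ trivial. For the inductive step it suffices to show two things: that $\theta(\delta_{p^{e-1}})\delta_p=\delta_{p^e}$, and that $\theta$ is a ring homomorphism so that $\theta(\theta^{e-2}(\delta_p)\cdots\delta_p)=\theta^{e-1}(\delta_p)\cdots\theta(\delta_p)$. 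Given these, the left side equals $\theta(\delta_{p^{e-1}})\delta_p$ by the inductive hypothesis, and hence equals $\delta_{p^e}$. The second point is \cite{TeixeiraII}, Theorem 2.13.

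The key computation is $\theta(\delta_n)$ for $n=p^{e-1}$. Write $\delta_n=\sum_{i=0}^{n-1}(-1)^i\lambda_i$. Applying $\theta$ gives
$$\theta(\delta_n)=\sum_{i \textup{ even}}\lambda_{pi}-\sum_{i \textup{ odd}}\lambda_{pi+p-1}.$$
Multiply by $\delta_p=\sum_{j=0}^{p-1}(-1)^j\lambda_j$ and use \Cref{2.4}:
\begin{itemize}
\item For even $i$, $\lambda_{pi}\lambda_j=\lambda_{pi+j}$.
\item For odd $i$, $\lambda_{pi+p-1}\lambda_j=\lambda_{p(i+1)-1-j}$, using part (2) with $p(i+1)$.
\end{itemize}
For even $i$ we get $\sum_j(-1)^j\lambda_{pi+j}$. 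For odd $i$ we get $-\sum_j(-1)^j\lambda_{pi+(p-1-j)}$. Reindexing with $k=p-1-j$ and using that $p-1$ is even, the odd case becomes $-\sum_k(-1)^k\lambda_{pi+k}$. Since $pi$ has the parity of $i$ ($p$ odd), both cases combine to $\sum_k(-1)^{pi+k}\lambda_{pi+k}$. Summing over $0\le i<n$ gives
$$\sum_{m=0}^{pn-1}(-1)^m\lambda_m=\delta_{pn}=\delta_{p^e}.$$
This closes the induction.

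The only real obstacle is the sign bookkeeping in this last step, which hinges on the oddness of $p$. No obstruction arises, since the paper assumes $p$ odd at this point.
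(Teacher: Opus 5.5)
Your proof is correct, but it runs in the opposite direction from the paper's. The paper expands $\delta_{p^e}=\sum_{0\le a<p^e}(-1)^a\lambda_a$ and applies $\varphi_e$ termwise via the explicit digit formula (the proposition just before \Cref{2.8}, with the nested reflections $a_j\mapsto a_j^{\sigma(a_{e-1}\ldots a_{j+1})}$). It then notes that for each fixed choice of higher digits this reflection is a parity-preserving bijection of $\{0,\ldots,p-1\}$. Combined with $(-1)^a=(-1)^{\sum_j a_j}$ for odd $p$, the whole sum factors at once as $\bigotimes_j\sum_{b=0}^{p-1}(-1)^b\lambda_b=\delta_p^{\otimes e}$. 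You instead compute $\varphi_e^{-1}(\Delta_e)=\theta^{e-1}(\delta_p)\cdots\theta(\delta_p)\delta_p$ and use multiplicativity of $\theta$ to reduce everything to the one-step identity $\theta(\delta_n)\delta_p=\delta_{pn}$, which you check directly from \Cref{2.4}. The two uses of the oddness of $p$ correspond: your reindexing $k=p-1-j$ is the one-digit case of the paper's observation that $a^*$ has the parity of $a$, and your $(-1)^{pi}=(-1)^i$ is the paper's $(-1)^a=(-1)^{\sum_j a_j}$. Your route avoids the nested $\sigma$-bookkeeping and proves the slightly more general identity $\theta(\delta_n)\delta_p=\delta_{pn}$ for every $n\ge 1$. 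The paper's forward expansion instead gives a template it reuses directly in \Cref{3.3}. There $\varphi_e(\delta_{(p^e\pm1)/2})$ is a sum of several pure tensors rather than a single one, and it is computed by grouping the $\lambda$-indices according to their leading digits.
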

\begin{proof}
\begin{align*}
\varphi_e(\delta_{p^e})=\varphi_e(\sum_{0 \leq a<p^e}(-1)^a\lambda_a)=\varphi_e(\sum_{0 \leq a_{e-1},\ldots,a_1,a_0<p}(-1)^{\sum_{i}a_i}\lambda_{(a_{e-1}\ldots a_1a_0)_p})\\
=\sum_{0 \leq a_{e-1},\ldots,a_1,a_0<p}(-1)^{\sum_{i}a_i}\lambda_{a_{e-1}}\otimes\lambda_{a_{e-2}^{\sigma(a_{e-1})}}\otimes\ldots\otimes\lambda_{a_2^{\sigma(a_{e-1}\ldots a_3)}}\otimes\lambda_{a_1^{\sigma(a_{e-1}\ldots a_2)}}.
\end{align*} 
We observe that no matter what parity $a_{e-1}\ldots a_{j+1}$ is, $a_j^{\sigma(a_{e-1}\ldots a_{j+1})}$ has the same parity as $a_j$ and runs through $\{0,1,\ldots,p-1\}$ as $a_j$ does. So the above sum is equal to
$$\otimes_{0 \leq j \leq e-1}\sum_{0 \leq a_j<p}(-1)^{a_j}\lambda_{a_j}=\otimes_{0 \leq j \leq e-1}\delta_p=\Delta_e.$$
\end{proof}
\begin{proposition}
\begin{enumerate}
\item For any $v \in \Gamma_e(\mathbb{C})$, $v\delta_{p^e}=\ell(v)\delta_{p^e}$.
\item For any $w \in \Gamma_1(\mathbb{C})^{\otimes e}$, $w\Delta_e=\tilde\ell(w)\Delta_e$.
\item For any $v \in \Gamma_e(\mathbb{C})$, $\ell(v)=\tilde\ell(\varphi_e(v))$.
\item $\tilde\ell$ is multiplicative on $\Gamma_1(\mathbb{C})^{\otimes e}$.
\item For any $v \in \Gamma_e(\mathbb{C})$, $\mu(v)=\tilde\mu(\varphi_e(v))$.
\end{enumerate}
In this sense, we may say $\ell,\mu$ are compatible with the isomorphism $\varphi_e$.
\end{proposition}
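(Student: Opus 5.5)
We prove the five items in the order (1), (4), (2), (3), (5), since the later items rest on the earlier ones. Throughout we use linearity to reduce to basis elements. The $\lambda$-bases are convenient because $\ell(\lambda_i)=(-1)^i$ and $\mu(\lambda_i)=\delta_{i0}$.

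\emph{Item (1).} By linearity it suffices to show $\delta_i\delta_{p^e}=i\,\delta_{p^e}$ for $1\le i\le p^e$. The module $M_{p^e}=\mathbbm{k}[T]/(T^{p^e})$ is the regular representation of $\mathbb{Z}/p^e\mathbb{Z}$. By \Cref{2.2}, the product in $\Gamma_e$ is the tensor product of representations. For any representation $V$ we have $V\otimes \mathbbm{k}G\cong \mathbbm{k}G^{\dim V}$ (the standard untwisting isomorphism $v\otimes g\mapsto g^{-1}v\otimes g$). Hence $v\delta_{p^e}=\ell(v)\delta_{p^e}$, using that $\ell$ is dimension.

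\emph{Item (4).} The map $\ell$ is multiplicative on $\Gamma_1(\mathbb{C})$, since dimension is multiplicative under $\otimes_{\mathbbm{k}}$. A tensor product of ring homomorphisms $\Gamma_1(\mathbb{C})\to\mathbb{C}$ is a ring homomorphism $\Gamma_1(\mathbb{C})^{\otimes e}\to\mathbb{C}$. So $\tilde\ell$ is multiplicative.

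\emph{Item (2).} On pure tensors, $(w_1\otimes\cdots\otimes w_e)\Delta_e=(w_1\delta_p)\otimes\cdots\otimes(w_e\delta_p)$. By item (1) with $e=1$, each factor $w_j\delta_p$ equals $\ell(w_j)\delta_p$. The product is therefore $\prod_j\ell(w_j)\,\Delta_e=\tilde\ell(w)\Delta_e$, and linearity gives the general case.

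\emph{Item (3).} This is the one step that needs care. Since $\varphi_e$ is a ring isomorphism with $\varphi_e(\delta_{p^e})=\Delta_e$ (the previous proposition), applying $\varphi_e$ to item (1) gives
$$\ell(v)\Delta_e=\varphi_e(v\delta_{p^e})=\varphi_e(v)\Delta_e=\tilde\ell(\varphi_e(v))\Delta_e,$$
where the last equality is item (2). Since $\Delta_e\neq0$, we conclude $\ell(v)=\tilde\ell(\varphi_e(v))$. Alternatively, one can check directly on basis elements. The proof of the earlier proposition showed that $\varphi_e(\lambda_a)=\lambda_{b_{e-1}}\otimes\cdots\otimes\lambda_{b_0}$, where each $b_j$ is $a_j$ or $a_j^*$. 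Because $p$ is odd, $a_j^*=p-1-a_j$ has the same parity as $a_j$. So $\tilde\ell(\varphi_e(\lambda_a))=(-1)^{\sum a_j}=(-1)^a$, again using that $p$ is odd. This equals $\ell(\lambda_a)$.

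\emph{Item (5).} Check on the basis $\lambda_a$. We have $\tilde\mu(\varphi_e(\lambda_a))=\prod_j\mu(\lambda_{b_j})$, which is $1$ if all $b_j=0$ and $0$ otherwise. By the recursive description in the earlier proposition, $b_{e-1}=a_{e-1}$. If $a_{e-1}=0$, then $\sigma$ of the leading digit string is even, so the next $b$ equals the next $a$. Inductively, whenever all higher $b_j$ vanish we get $b_j=a_j$. Hence all $b_j=0$ if and only if all $a_j=0$, i.e.\ $a=0$. Thus $\tilde\mu(\varphi_e(\lambda_a))=\delta_{a0}=\mu(\lambda_a)$.

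The main obstacle is bookkeeping the $\sigma$-twists. It is handled by two facts: $*$ preserves parity when $p$ is odd, and $*$ never acts while the leading digits are all zero.
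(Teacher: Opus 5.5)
Your proof is correct and follows the paper's argument for items (1), (2), (3) and (5). In (5) you also spell out, via the $\sigma$-twists, why all $b_j=0$ forces $a=0$, which the paper leaves implicit (and states with $\delta_a$ where $\lambda_a$ is meant); the only real deviation is item (4), which you obtain directly as the tensor product of the ring homomorphisms $\ell\colon\Gamma_1(\mathbb{C})\to\mathbb{C}$, whereas the paper deduces it from (3) by transporting multiplicativity of $\ell$ on $\Gamma_e$ through the ring isomorphism $\varphi_e$, and both routes are immediate.
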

\begin{proof}
(1): since $\delta_{p^e}$ is the unique projective $\mathbb{Z}/p^e\mathbb{Z}$-representation, and the tensor product of a projective representation with any representation is still projective, $v\delta_{p^e}$ is a multiple of $\delta_{p^e}$. The coefficient $\ell(v)$ comes from the comparison of dimensions of representations.

(2): this is true since $v\delta_p=\ell(v)\delta_p$ and $\tilde\ell$ is multilinear.

(3): by (1) and (2) we have for any $v \in \Gamma_e(\mathbb{C})$,
$$\ell(v)\Delta_e=\ell(v)\varphi_e(\delta_{p^e})=\varphi_e(\ell(v)\delta_{p^e})=\varphi_e(v\delta_{p^e})=\varphi_e(v)\Delta_e=\tilde\ell(\varphi_e(v))\Delta_e$$
and $\varphi_e(\delta_{p^e})=\Delta_e$ is not torsion, so $\ell(v)=\tilde\ell(\varphi_e(v))$.

(4): this is trivial by (3).

(5): it suffices to verify for the basis element $v=\delta_a,1 \leq a<p^e$, and $\varphi_e(\delta_a)=\lambda_{b_{e-1}}\otimes \lambda_{b_{e-2}}\otimes \ldots \otimes \lambda_{b_0}$. Both sides are equal to $1$ exactly when $a=1$ and all $b_i=0$, otherwise both sides are equal to $0$.
\end{proof}

\section{Hilbert-Kunz functions of quadrics}
As an application of \Cref{2.8} and \Cref{2.9}, we will compute the Hilbert-Kunz function of the Fermat quadratic hypersurface $Q_{p,d}=\mathbbm{k}[[x_1,\ldots,x_d]]/(x_1^2+\ldots+x_d^2)$, and express it explicitly in terms of elements in $\Gamma_1(\mathbb{C})$. We assume $p$ is odd from now on.
\begin{definition}
For $e \in \mathbb{Z}_+$, we define
$$v_e=\frac{1}{2}\delta_{\frac{p^e-1}{2}}+\frac{1}{2}\delta_{\frac{p^e+1}{2}} \in \Gamma_e(\mathbb{C}), w_e=\varphi_e(v_e),a_{p,e,d}=2^d\tilde\mu(w_e^d),b_{p,d}=\lim_{e \to \infty}\frac{a_{p,e,d}}{p^{e(d-1)}}.$$
\end{definition}
\begin{proposition}
The Hilbert-Kunz function of $\mathbbm{k}[[x_1,\ldots,x_d]]/(x_1^2+\ldots+x_d^2)$ is given by
$$e \mapsto a_{p,e,d}.$$
Therefore, its Hilbert-Kunz multiplicity is $b_{p,d}$.
\end{proposition}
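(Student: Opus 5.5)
We need to show $\ell(\mathbbm{k}[[x_1,\ldots,x_d]]/(x_1^2+\ldots+x_d^2,x_1^q,\ldots,x_d^q))=2^d\tilde\mu(w_e^d)$ with $q=p^e$. The plan follows Han–Monsky: count generators of a $\mathbbm{k}$-object built from the $x_i$, then transport that count through $\varphi_e$.

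\emph{Step 1 (reduction to $\mathbbm{k}$-objects).} Put $A=\mathbbm{k}[x_1,\ldots,x_d]/(x_1^q,\ldots,x_d^q)$ and let $T$ act on $A$ as multiplication by $f=x_1^2+\cdots+x_d^2$; this action is nilpotent. The quotient in question is $A/TA$, so its length is the number of generators $\mu([A])$ of the $\mathbbm{k}$-object $A$. We have $A=\bigotimes_i \mathbbm{k}[x_i]/(x_i^q)$, with $T$ acting on each factor by $x_i^2$, and this action is distributive in exactly the sense used for the product in $\Gamma$. Hence $[A]=M^d$ in $\Gamma$, where $M$ is $\mathbbm{k}[x]/(x^q)$ with $T$ acting by $x^2$.

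\emph{Step 2 (decomposing $M$).} As a $\mathbbm{k}[T]$-module, $M$ splits according to the parity of the exponent of $x$. The even part has basis $1,x^2,\ldots,x^{q-1}$, giving $M_{(q+1)/2}$. The odd part has basis $x,x^3,\ldots,x^{q-2}$, giving $M_{(q-1)/2}$. Therefore
$$[M]=\delta_{\frac{q+1}{2}}+\delta_{\frac{q-1}{2}}=2v_e\in\Gamma_e.$$
Both indices are at most $p^e$, so this element lies in $\Gamma_e$. Since $\Gamma_e$ is a subring, $[A]=2^dv_e^d\in\Gamma_e$.

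\emph{Step 3 (transport through $\varphi_e$).} By linearity of $\mu$, the length equals $\mu(2^dv_e^d)=2^d\mu(v_e^d)$. By part (5) of the compatibility proposition, $\mu=\tilde\mu\circ\varphi_e$. Since $\varphi_e$ is a ring isomorphism (\Cref{2.9}), $\varphi_e(v_e^d)=w_e^d$, and so the length is $2^d\tilde\mu(w_e^d)=a_{p,e,d}$.

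\emph{Step 4 (the multiplicity).} The Hilbert–Kunz multiplicity follows from Monsky's definition. The ring has dimension $d-1$, and the limit is taken over $p^{e(d-1)}$, so $e_{HK}=b_{p,d}$. The limit exists by \cite{Mon83}.

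The main point requiring care is Step 1. One must check that $A/fA$ is precisely $R/\mathfrak m^{[q]}$. It is, because $\mathfrak m^{[q]}$ is generated by the $x_i^q$ and completion does not change finite-length quotients. One must also check that the $\mathbbm{k}$-object product is compatible with multiplication by a sum: on $\bigotimes_i$, multiplication by $\sum_i x_i^2$ acts as $\sum_i 1\otimes\cdots\otimes x_i^2\otimes\cdots\otimes 1$, which is the iterated distributive action $T(m\otimes n)=Tm\otimes n+m\otimes Tn$.
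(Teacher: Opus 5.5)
Your proposal is correct and follows the paper's argument: realize $R/\mathfrak{m}^{[p^e]}$ as $A/TA$ for the $\mathbbm{k}$-object $A=\bigotimes_i \mathbbm{k}[x_i]/(x_i^{p^e})$ with $T$ acting by $\sum_i x_i^2$, identify $[A]=(2v_e)^d$ in $\Gamma_e$, and transport $\mu$ to $\tilde\mu$ through the ring isomorphism $\varphi_e$. You also supply details the paper leaves implicit, namely the parity splitting $\mathbbm{k}[x]/(x^q)\cong M_{(q+1)/2}\oplus M_{(q-1)/2}$ under $T=x^2$ and the check that multiplication by $\sum_i x_i^2$ is the iterated distributive $T$-action.
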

\begin{proof}
This just comes from Han-Monsky's method; as a $\mathbbm{k}$-object with respect to $(x_1^2+\ldots+x_d^2)$, $\mathbbm{k}[[x_1,\ldots,x_d]]/(x_1^{p^e},\ldots,x_d^{p^e})=(2v_e)^d$, so 
$\mathbbm{k}[[x_1,\ldots,x_d]]/(x_1^{p^e},\ldots,x_d^{p^e},x_1^2+\ldots+x_d^2)=\mu(2v_e)^d=2^d\mu(v_e^d)=2^d\tilde\mu(w_e^d)$.
\end{proof}
Now we compute $w_e^d$. We denote $r=(p-1)/2$ and discuss according to parity of $r$.

\textbf{Case 1: $p \equiv 1 \operatorname{mod} 4$.} In this case $r$ is even.
\begin{proposition}\label[proposition]{3.3}
The image $w_e$ of $v_e$ in $\Gamma_1(\mathbb{C})^{\otimes e}$ is given by  
$$\varphi_e(v_e)=\delta_{r}\otimes \delta_p\otimes \ldots \otimes \delta_p+\lambda_r\otimes \delta_r\otimes \delta_p\otimes \ldots \otimes \delta_p+\ldots+\lambda_r\otimes\ldots\otimes \lambda_r\otimes(\frac{1}{2}\delta_r+\frac{1}{2}\delta_{r+1}).$$
\end{proposition}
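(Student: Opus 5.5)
We prove the formula by expanding $v_e$ in the $\lambda$-basis, grouping the terms according to their $p$-adic digits, and then applying $\varphi_e$ digit by digit, exactly as in the proof that $\varphi_e(\delta_{p^e})=\Delta_e$.

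First, since $\delta_n=\sum_{0\le a<n}(-1)^a\lambda_a$ and $\delta_{n+1}=\delta_n+(-1)^n\lambda_n$, we get, with $N=\frac{p^e-1}{2}$,
$$v_e=\delta_N+\tfrac12(-1)^N\lambda_N .$$
The key observation is that $N=(rr\ldots r)_p$, that is, every $p$-adic digit of $N$ equals $r$. Because $p$ is odd, $(-1)^a=(-1)^{\sum_i a_i}$ for $a=(a_{e-1}\ldots a_0)_p$. Because $r$ is even in Case 1, $(-1)^N=1$.

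Next we partition $\{0\le a<N\}$ by the first position at which the digits of $a$ fall below those of $N$. For $k=0,\ldots,e-1$, let $A_k$ be the set of $a$ whose top $k$ digits equal $r$, whose next digit $c$ satisfies $c<r$, and whose remaining $e-1-k$ digits are arbitrary. This is a disjoint decomposition. We then compute $\varphi_e$ of each block using the explicit formula for the $b_j$ from the proposition preceding \Cref{2.8}:

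\begin{enumerate}
\item The leading digits $r$ have prefix parity $\sigma(r\ldots r)$ even, since $r$ is even. Hence each maps to $\lambda_r$, and they contribute the sign $(-1)^{kr}=1$.
\item The digit $c$ also sits behind an even prefix, so it maps to $\lambda_c$. Summing $(-1)^c\lambda_c$ over $c<r$ gives $\delta_r$.
\item For each free lower digit, $a_j\mapsto a_j^{\sigma(\cdot)}$ is a parity-preserving bijection of $\{0,\ldots,p-1\}$, whatever the prefix is. So, as in the proof of $\varphi_e(\delta_{p^e})=\Delta_e$, the sum over that digit factors off as $\sum_{b}(-1)^b\lambda_b=\delta_p$.
\end{enumerate}

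Thus
$$\varphi_e\Big(\sum_{a\in A_k}(-1)^a\lambda_a\Big)=\lambda_r^{\otimes k}\otimes\delta_r\otimes\delta_p^{\otimes(e-1-k)}.$$

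Finally, $\varphi_e(\lambda_N)=\lambda_r^{\otimes e}$, because all prefixes are even. Adding $\frac12\lambda_r^{\otimes e}$ to the $k=e-1$ term gives
$$\lambda_r^{\otimes(e-1)}\otimes\big(\delta_r+\tfrac12\lambda_r\big).$$
Since $r$ is even, $\lambda_r=\delta_{r+1}-\delta_r$, so $\delta_r+\tfrac12\lambda_r=\tfrac12\delta_r+\tfrac12\delta_{r+1}$, which is the stated last term.

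The main obstacle is the sign and $\sigma$ bookkeeping in the block computation. This is precisely where $r$ even is used: it makes every prefix consisting of $r$'s even, so no digit gets starred. In the case $p\equiv 3 \bmod 4$ this fails, and the terms would alternate between $\lambda_c$ and $\lambda_{c^*}$.
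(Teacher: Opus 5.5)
Your proof is correct and follows essentially the same route as the paper. Both split $\{0\le a<(rr\ldots r)_p\}$ into the $e$ blocks determined by the first digit falling below $r$, and both evaluate $\varphi_e$ on each block by the argument used for $\varphi_e(\delta_{p^e})=\Delta_e$. The only difference is cosmetic: you write $v_e=\delta_N+\frac12\lambda_N$ and compute $\varphi_e(\lambda_N)=\lambda_r^{\otimes e}$ directly, rather than computing $\varphi_e(\delta_{(p^e+1)/2})$ separately and averaging. You also spell out that $r$ even is what prevents any digit from being starred, which the paper leaves implicit in ``the signs always match''.
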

\begin{proof}
We first claim the following equality holds:
$$\varphi_e(\delta_{\frac{p^e-1}{2}})=\delta_{r}\otimes \delta_p\otimes \ldots \otimes \delta_p+\lambda_r\otimes \delta_r\otimes \delta_p\otimes \ldots \otimes \delta_p+\ldots+\lambda_r\otimes\ldots\otimes \lambda_r\otimes\delta_r.$$  
We see the $p$-adic expansion of $\frac{p^e-1}{2}$ is $(rr\ldots r)_p$. By definition of $\delta_i$ and $\lambda_i$,
$$\delta_{(rr\ldots r)_p}=\sum_{a=(a_e\ldots a_2a_1)_p<(rr\ldots r)_p}(-1)^a\lambda_a.$$
Now we arrange them into $e$ groups according to digits of the indices of $\lambda$. The indices of the first group are $(00\ldots 0)_p\sim(r-1,p-1,\ldots,p-1)_p$, of the second group are $(r0\ldots 0)_p\sim(r,r-1,p-1,\ldots,p-1)_p$, and so on. The indices of the last group are $(rr\ldots r0)_p \sim (rr\ldots rr-1)_p$. We can use the same proof of $\varphi_e(\delta_{p^e})=\Delta_e$, and notice that the signs always match on the following equalities:
$$\varphi_e(\sum_{(00\ldots 0)_p\leq a=(a_e\ldots a_2a_1)_p<(r0\ldots 0)_p}(-1)^a\lambda_a)=\sum_{0\leq a<r}(-1)^a\lambda_a\otimes (\sum_{0\leq a<p}(-1)^a\lambda_a)^{e-1}=\delta_r\otimes \delta_p^{\otimes (e-1)},$$
$$\varphi_e(\sum_{(r0\ldots 0)_p\leq a=(a_e\ldots a_2a_1)_p<(rr\ldots 0)_p}(-1)^a\lambda_a)=\lambda_r\otimes\sum_{0\leq a<r}(-1)^a\lambda_a\otimes (\sum_{0\leq a<p}(-1)^a\lambda_a)^{e-2}=\lambda_r\otimes\delta_r\otimes \delta_p^{\otimes (e-2)},$$
$\ldots$
$$\varphi_e(\sum_{(rr\ldots r0)_p\leq a=(a_e\ldots a_2a_1)_p<(rr\ldots r)_p}(-1)^a\lambda_a)=\lambda_r^{\otimes (e-1)}\sum_{0\leq a<r}(-1)^a\lambda_a=\lambda_r^{\otimes (e-1)}\otimes\delta_r.$$
Adding them up leads to the equality. Similarly we have
$$\varphi_e(\delta_{\frac{p^e+1}{2}})=\delta_{r}\otimes \delta_p\otimes \ldots \otimes \delta_p+\lambda_r\otimes \delta_r\otimes \delta_p\otimes \ldots \otimes \delta_p+\ldots+\lambda_r\otimes\ldots\otimes \lambda_r\otimes\delta_{r+1}.$$
Therefore,
$$\varphi_e(v_e)=\delta_{r}\otimes \delta_p\otimes \ldots \otimes \delta_p+\lambda_r\otimes \delta_r\otimes \delta_p\otimes \ldots \otimes \delta_p+\ldots+\lambda_r\otimes\ldots\otimes \lambda_r\otimes(\frac{1}{2}\delta_r+\frac{1}{2}\delta_{r+1}).$$
\end{proof}
\begin{proposition}
\begin{enumerate}
\item $\tilde\ell(w_e)=\ell(v_e)=p^e/2$.
\item $w_1=\frac{1}{2}(\delta_r+\delta_{r+1})$, $w_e=\delta_r\otimes \Delta_{e-1}+\lambda_r\otimes w_{e-1}$.
\end{enumerate}    
\end{proposition}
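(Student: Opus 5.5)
Both parts follow from the compatibility properties of $\ell$ and $\mu$ with $\varphi_e$ and from the explicit formula in \Cref{3.3}.

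\textbf{Part (1).} The preceding proposition gives $\ell(v)=\tilde\ell(\varphi_e(v))$ for every $v\in\Gamma_e(\mathbb{C})$. Hence $\tilde\ell(w_e)=\ell(v_e)$. By definition $\ell(\delta_i)=i$, so
$$\ell(v_e)=\tfrac12\cdot\tfrac{p^e-1}{2}+\tfrac12\cdot\tfrac{p^e+1}{2}=\tfrac{p^e}{2}.$$
This involves no real work.

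\textbf{Part (2), base case.} For $e=1$, $\varphi_1$ is the identity on $\Gamma_1(\mathbb{C})$. Indeed, for $a<p$ the decomposition in the definition of $\varphi_e$ is just $\lambda_a=\lambda_a$. Since $(p-1)/2=r$ and $(p+1)/2=r+1$, we get $w_1=v_1=\frac12(\delta_r+\delta_{r+1})$.

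\textbf{Part (2), recursion.} I would read it off the formula of \Cref{3.3}. That formula writes $w_e$ as a sum of $e$ terms. The first term is $\delta_r\otimes\delta_p^{\otimes(e-1)}=\delta_r\otimes\Delta_{e-1}$. Each remaining term has a leading tensor factor $\lambda_r$, so we can factor it out:
$$\lambda_r\otimes\Bigl(\delta_r\otimes\delta_p^{\otimes(e-2)}+\lambda_r\otimes\delta_r\otimes\delta_p^{\otimes(e-3)}+\cdots+\lambda_r^{\otimes(e-2)}\otimes\tfrac12(\delta_r+\delta_{r+1})\Bigr).$$
The bracket is exactly the expression of \Cref{3.3} with $e$ replaced by $e-1$, namely $w_{e-1}$. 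This gives $w_e=\delta_r\otimes\Delta_{e-1}+\lambda_r\otimes w_{e-1}$, where $\Gamma_1(\mathbb{C})^{\otimes e}$ is identified with $\Gamma_1(\mathbb{C})\otimes\Gamma_1(\mathbb{C})^{\otimes(e-1)}$. For $e=2$ the recursion reads $\delta_r\otimes\delta_p+\lambda_r\otimes w_1$, which agrees with \Cref{3.3}.

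\textbf{Main obstacle.} The only delicate point is that \Cref{3.3} was proved under the assumption $r$ even. In that case every sign $(-1)^a$ on a block $(r\ldots r\,a_j\ast\ldots\ast)_p$ equals $(-1)^{a_j}$ times the parities of the remaining digits. Also, the digit complementation $\sigma$ is trivial along the prefix $r\ldots r$, since that prefix has even digit sum.

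If the statement is meant for all odd $p$, I would redo the block decomposition in the proof of \Cref{3.3} for $r$ odd, with two changes:
\begin{itemize}
\item The prefix $(r\ldots r)_p$ of length $j$ has parity $j$. So the digits after it get complemented, $a\mapsto a^*$, when $j$ is odd.
\item The sign $(-1)^{jr}$ alternates.
\end{itemize}
Since $a\mapsto a^*$ preserves parity (as $p-1$ is even) and permutes $\{0,\ldots,p-1\}$, the factors $\delta_p$ are unaffected. The worry is the factor $\sum_{a<r}(-1)^a\lambda_{a^*}$. I would check whether the complemented ranges and the signs combine so that each block again produces $\lambda_r^{\otimes j}\otimes\delta_r\otimes\Delta_{e-1-j}$, possibly up to a sign absorbed into $\lambda_r$. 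If they do not, the recursion for $r$ odd must be derived separately. In that case I would restrict the present statement to Case 1, where the argument above is complete.
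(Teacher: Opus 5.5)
Your proof is correct and is the argument the paper leaves implicit: part (1) is the compatibility $\ell=\tilde\ell\circ\varphi_e$ evaluated on $v_e$, and part (2) is read off from \Cref{3.3} by factoring $\lambda_r$ out of every summand but the first. Restricting to Case 1 is not just a fallback but necessary: for $r$ odd the recursion really becomes $w_e=\delta_r\otimes\Delta_{e-1}+(-\lambda_r)\otimes w_{e-1}^*$ (for $p=3$, $e=2$ the Case 1 formula is off by $-\lambda_1\otimes\delta_3$), which is why the paper states this only for $p\equiv 1 \bmod 4$ and handles $p\equiv 3 \bmod 4$ separately.
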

\begin{proposition}
Let $d$ be a positive integer. The $d$-th power of $w_e$ satisfies the following recurrence relation:
\begin{align*}
w_e^d=\lambda_r^d\otimes w_{e-1}^d+\sum_{1 \leq i \leq d}{d \choose i}\frac{p^{e(d-1)}}{2^{d-i}p^{d-1}}\delta_r^i\lambda_r^{d-i}\otimes \Delta_{e-1}. 
\end{align*}
\end{proposition}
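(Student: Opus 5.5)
We start from the recursion $w_e=\delta_r\otimes \Delta_{e-1}+\lambda_r\otimes w_{e-1}$ of the preceding proposition and expand $w_e^d$ by the binomial theorem. This is legitimate because $\Gamma_1(\mathbb{C})^{\otimes e}=\Gamma_1(\mathbb{C})\otimes\Gamma_1(\mathbb{C})^{\otimes(e-1)}$ is a commutative ring and multiplication is componentwise on pure tensors. We get
$$w_e^d=\sum_{0\le i\le d}{d\choose i}\,\delta_r^i\lambda_r^{d-i}\otimes \Delta_{e-1}^i\, w_{e-1}^{d-i}.$$
The $i=0$ term is exactly $\lambda_r^d\otimes w_{e-1}^d$. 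It remains to simplify the second tensor factor for $i\ge 1$.

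For $i\geq 1$ we use the absorption property: for any $w\in\Gamma_1(\mathbb{C})^{\otimes(e-1)}$ we have $w\Delta_{e-1}=\tilde\ell(w)\Delta_{e-1}$. First, applying this with $w=\Delta_{e-1}$ and using $\tilde\ell(\Delta_{e-1})=\ell(\delta_p)^{e-1}=p^{e-1}$ (by multiplicativity of $\tilde\ell$), we get $\Delta_{e-1}^i=p^{(e-1)(i-1)}\Delta_{e-1}$. Next, applying it with $w=w_{e-1}^{d-i}$, and using multiplicativity of $\tilde\ell$ together with $\tilde\ell(w_{e-1})=p^{e-1}/2$, we get
$$\Delta_{e-1}\, w_{e-1}^{d-i}=\frac{p^{(e-1)(d-i)}}{2^{d-i}}\Delta_{e-1}.$$
For $e=1$ one interprets $\Gamma_1^{\otimes 0}=\mathbb{C}$, $\Delta_0=1$ and $w_0=1/2$, which is consistent with these formulas. (If one prefers not to define $w_0$, the statement is read for $e\geq 2$.)

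Combining the two computations, the $i$-th term for $i\geq 1$ becomes
$${d\choose i}\frac{p^{(e-1)(i-1)+(e-1)(d-i)}}{2^{d-i}}\,\delta_r^i\lambda_r^{d-i}\otimes\Delta_{e-1}.$$
The exponent of $p$ is $(e-1)(d-1)=e(d-1)-(d-1)$, so the coefficient is $\frac{p^{e(d-1)}}{2^{d-i}p^{d-1}}$, which matches the claimed formula.

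There is no real obstacle. The only points needing care are bookkeeping: the $p$-exponent must come out independent of $i$, and $\tilde\ell$ must be multiplicative (part (4) of the earlier proposition) so that it can be applied to powers.
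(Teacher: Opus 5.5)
Your proposal is correct and follows essentially the same route as the paper: binomial expansion of $w_e=\delta_r\otimes\Delta_{e-1}+\lambda_r\otimes w_{e-1}$, then for $i\ge 1$ absorbing everything into $\Delta_{e-1}$ via $w\Delta_{e-1}=\tilde\ell(w)\Delta_{e-1}$. You then use multiplicativity of $\tilde\ell$ with $\tilde\ell(\Delta_{e-1})=p^{e-1}$ and $\tilde\ell(w_{e-1})=p^{e-1}/2$; splitting off $\Delta_{e-1}^i$ first and noting the $e=1$ convention are only cosmetic differences.
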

\begin{proof}
We see
\begin{align*}
w_e^d=(\delta_r\otimes\Delta_{e-1}+\lambda_r\otimes w_{e-1})^d\\
=\sum_{0 \leq i \leq d}{d \choose i}\delta_r^i\lambda_r^{d-i}\otimes \Delta_{e-1}^iw_{e-1}^{d-i}.
\end{align*}
When $i$ is nonzero, there is at least one $\Delta_{e-1}$ in the expression, and therefore
\begin{align*}
\Delta_{e-1}^iw_{e-1}^{d-i}=\tilde\ell(\Delta_{e-1}^{i-1}w_{e-1}^{d-i})\Delta_{e-1}\\
=\tilde\ell(\Delta_{e-1})^{i-1}\tilde\ell(w_{e-1})^{d-i}\Delta_{e-1}.
\end{align*}
But we have
$$\ell(\Delta_{e-1})=p^{e-1},\ell(w_{e-1})=p^{e-1}/2.$$
So
$$\Delta_{e-1}^iw_{e-1}^{d-i}=\frac{p^{e(d-1)}}{2^{d-i}p^{d-1}}\Delta_{e-1}.$$
So
\begin{align*}
w_e^d=\lambda_r^d\otimes w_{e-1}^d+\sum_{1 \leq i \leq d}{d \choose i}\frac{p^{e(d-1)}}{2^{d-i}p^{d-1}}\delta_r^i\lambda_r^{d-i}\otimes \Delta_{e-1}.
\end{align*}
\end{proof}
Now we compute $a_{p,e,d}=2^d\tilde\mu(w_e^d)$.
\begin{definition}
For $p$ prime and $d$ positive integer, we define
\begin{align*}
c_{p,d}=\mu(\sum_{1 \leq i \leq d}{d\choose i}\frac{1}{2^{-i}p^{d-1}}\delta_r^i\lambda_r^{d-i})=\frac{1}{p^{d-1}}\mu((2\delta_r+\lambda_r)^d-\lambda_r^d).
\end{align*}
\end{definition}
\begin{proposition}
We have
$$a_{p,e,d}=c_{p,d}p^{e(d-1)}+c_{p,d}\mu(\lambda_r^d)p^{(e-1)(d-1)}+\ldots+c_{p,d}\mu(\lambda_r^d)^{e-2}p^{2(d-1)}+2^d\mu(\lambda_r^d)^{e-1}\mu(w_1^d).$$
\end{proposition}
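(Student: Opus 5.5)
Apply $2^d\tilde\mu$ to the recurrence for $w_e^d$ from the previous proposition, use multiplicativity of $\tilde\mu=\mu^{\otimes e}$ on pure tensors, and then unroll the resulting scalar recurrence in $e$.

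\textbf{Evaluating the recurrence.} Since $\tilde\mu=\mu^{\otimes e}$, for $x\in\Gamma_1(\mathbb{C})$ and $y\in\Gamma_1(\mathbb{C})^{\otimes(e-1)}$ we have $\tilde\mu(x\otimes y)=\mu(x)\tilde\mu(y)$. We also have $\tilde\mu(\Delta_{e-1})=\mu(\delta_p)^{e-1}=1$, because $\mu(\delta_i)=1$ for all $i$. Applying $\tilde\mu$ to the recurrence for $w_e^d$ therefore gives
$$\tilde\mu(w_e^d)=\mu(\lambda_r^d)\,\tilde\mu(w_{e-1}^d)+\mu\Big(\sum_{1\le i\le d}{d\choose i}\frac{p^{e(d-1)}}{2^{d-i}p^{d-1}}\delta_r^i\lambda_r^{d-i}\Big).$$
Multiplying by $2^d$, the second term is $p^{e(d-1)}\mu\big(\sum_i {d\choose i}2^i p^{-(d-1)}\delta_r^i\lambda_r^{d-i}\big)=c_{p,d}\,p^{e(d-1)}$, directly from the definition of $c_{p,d}$. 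Writing $m=\mu(\lambda_r^d)$, this yields
$$a_{p,e,d}=m\,a_{p,e-1,d}+c_{p,d}\,p^{e(d-1)},$$
valid for $e\ge 2$, since the recurrence for $w_e^d$ was derived from $w_e=\delta_r\otimes\Delta_{e-1}+\lambda_r\otimes w_{e-1}$, which needs $e-1\ge1$.

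\textbf{Unrolling.} I would prove the stated formula by induction on $e$, starting from $a_{p,1,d}=2^d\mu(w_1^d)$. Iterating the recurrence down to $e=1$ gives
$$a_{p,e,d}=\sum_{j=0}^{e-2}m^j c_{p,d}\,p^{(e-j)(d-1)}+m^{e-1}\,2^d\mu(w_1^d),$$
which is exactly the claimed expression. The base case $e=1$ is the definition of $a_{p,1,d}$ with an empty sum. For the inductive step, plug the formula for $e-1$ into the recurrence: the factor $m$ shifts every exponent of $m$ up by one, and the new term $c_{p,d}p^{e(d-1)}$ supplies the $j=0$ summand.

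\textbf{Main care point.} The only place needing attention is the identification of the $\mu$-term with $c_{p,d}$. The factor $2^{-(d-i)}$ times the prefactor $2^d$ must produce $2^{i}$, matching the $\frac{1}{2^{-i}p^{d-1}}$ in the definition of $c_{p,d}$. One also has to remember that $\tilde\mu(\Delta_{e-1})=1$, not $p^{e-1}$. Everything else is linearity of $\mu$ and $\tilde\mu$.
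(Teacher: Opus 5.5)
Your proof is correct and is essentially the paper's argument. The paper first iterates the tensor recurrence to write $w_e^d$ as a sum of pure tensors ending in $\Delta_{e-1},\ldots,\Delta_1$ and $(\lambda_r^d)^{\otimes(e-1)}\otimes w_1^d$, and only then applies $2^d\tilde\mu$; you apply $2^d\tilde\mu$ first and unroll the scalar recurrence $a_{p,e,d}=\mu(\lambda_r^d)\,a_{p,e-1,d}+c_{p,d}\,p^{e(d-1)}$ by induction, using the same ingredients ($\tilde\mu(x\otimes y)=\mu(x)\tilde\mu(y)$, $\tilde\mu(\Delta_i)=1$, and $2^d\cdot 2^{-(d-i)}=2^i$ matching the definition of $c_{p,d}$).
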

\begin{proof}
We have
\begin{align*}
w_e^d=\sum_{1 \leq i \leq d}{d \choose i}\frac{p^{e(d-1)}}{2^{d-i}p^{d-1}}\delta_r^i\lambda_r^{d-i}\otimes \Delta_{e-1}\\
+\lambda_r^d\otimes\sum_{1 \leq i \leq d}{d \choose i}\frac{p^{(e-1)(d-1)}}{2^{d-i}p^{d-1}}\delta_r^i\lambda_r^{d-i}\otimes \Delta_{e-2}\\
+\ldots+(\lambda_r^d)^{\otimes (e-2)}\otimes\sum_{1 \leq i \leq d}{d \choose i}\frac{p^{2(d-1)}}{2^{d-i}p^{d-1}}\delta_r^i\lambda_r^{d-i}\otimes \Delta_{1}+(\lambda_r^d)^{\otimes (e-1)}\otimes w_1^d.
\end{align*}
Now apply $2^d\tilde\mu$ on both sides. Note that $\tilde\mu=\mu^{\otimes e}$, $\mu$ is $\mathbb{C}$-linear, and $\tilde\mu(\Delta_i)=1$ for any $i$.
\end{proof}
\begin{lemma}
We have $0\leq\mu(\lambda_r^d)<p^{d-1}$.    
\end{lemma}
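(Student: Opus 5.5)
The plan is to work entirely in the $\lambda$-basis of $\Gamma_1$ and use the explicit product formula of \Cref{2.3}. The key observation is that every product $\lambda_i\lambda_j$ with $0\le i,j<p$ is a sum of distinct basis elements $\lambda_k$, each with coefficient $1$. Consequently, for every $d\ge 1$ the power $\lambda_r^d$ is a $\mathbb{Z}_{\ge 0}$-linear combination of $\lambda_0,\ldots,\lambda_{p-1}$. Since $\mu(\lambda_0)=1$ and $\mu(\lambda_k)=0$ for $k\neq 0$, the number $\mu(\lambda_r^d)$ is exactly the coefficient of $\lambda_0$ in $\lambda_r^d$. This gives $\mu(\lambda_r^d)\ge 0$ immediately.

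For the upper bound, I introduce the ``mass'' $s(x)$: the sum of the $\lambda$-coefficients of an element $x$ with nonnegative coefficients. Then $\mu(x)\le s(x)$. By \Cref{2.3}, $\lambda_r\lambda_k$ is a sum of $\lambda_m$ with $m$ running from $|k-r|$ to $\min\{k+r,\,2p-2-k-r\}$. The number of terms is at most $2\min\{r,k,p-1-k\}+1\le 2r+1=p$. Hence $s(\lambda_r x)\le p\,s(x)$ for $x$ with nonnegative coefficients. Starting from $s(\lambda_r)=1$, induction gives $s(\lambda_r^d)\le p^{d-1}$.

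It remains to make the inequality strict. For $d=1$ we have $\mu(\lambda_r)=0<1$ since $r\ge 1$. For $d\ge 2$, I show that $\lambda_r^d$ has a positive coefficient on some $\lambda_k$ with $k\neq 0$; then $\mu(\lambda_r^d)\le s(\lambda_r^d)-1<p^{d-1}$.
\begin{itemize}
\item \Cref{2.3} with $i=j=r$ gives $\lambda_r^2=\sum_{k=0}^{p-1}\lambda_k$, because $2r=p-1=2p-2-2r$. This contains $\lambda_1$ with coefficient $1$.
\item Inductively, if $\lambda_r^{d}$ contains $\lambda_0$ with positive coefficient, then $\lambda_r^{d+1}$ contains $\lambda_0\lambda_r=\lambda_r$ with positive coefficient.
\item If $\lambda_r^{d}$ contains $\lambda_r$ with positive coefficient, then $\lambda_r^{d+1}$ contains all of $\lambda_r^2$, hence both $\lambda_0$ and $\lambda_1$, with positive coefficients.
\end{itemize}
Because all coefficients are nonnegative, no cancellation can occur, so these positivity statements propagate. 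Thus for every $d\ge 2$, $\lambda_r^d$ has a positive coefficient on $\lambda_1$ or on $\lambda_r$, with $r\ge 1$.

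The only delicate point is the term-count bound $\le p$ in \Cref{2.3} when $k>r$. There the upper limit $2p-2-k-r$ must be used rather than $k+r$, which gives $\min\{2r,\,2(p-1-k)\}+1\le p$. This is routine once written out.
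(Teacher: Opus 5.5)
Your proof is correct and is essentially the paper's argument. The paper likewise works in the nonnegative cone $\sum_i\mathbb{N}\lambda_i$ and uses the coefficient-sum function $\mu^+$ (your mass $s$), with $\mu^+(xy)\le p\,\mu^+(x)\mu^+(y)$ from \Cref{2.3}, and gets strictness from a nonvanishing non-$\lambda_0$ coefficient: the $\lambda_r$-coefficient of $\lambda_r^d$, which persists because $\lambda_r^2$ contains $\lambda_r$. The term count you flag as delicate is not: the range $|k-r|\le m\le k+r$ already has at most $2\min(k,r)+1\le p$ elements.
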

\begin{proof}
We consider the semigroup $\Gamma_1(\mathbb{C})^+=\sum_{0 \leq i<p}\mathbb{N}\lambda_i$ and the restriction of $\mathbb{Z}$-linear function $\mu^+: \Gamma_1(\mathbb{C})^+ \to \mathbb{N}$ such that $\mu^+(\lambda_i)=1$ for any $i$. By \Cref{2.3} we see $\Gamma_1(\mathbb{C})^+$ is closed under multiplication and for any $x,y \in \Gamma_1(\mathbb{C})^+,\mu^+(xy)\leq p\mu^+(x)\mu^+(y)$. In particular $\lambda_r^d \in \Gamma_1(\mathbb{C})^+$. Also restricted on $\Gamma_1(\mathbb{C})^+$, $0\leq \mu\leq \mu^+$ and $\mu(v)=\mu^+(v)$ if and only if $v$ is a multiple of $\lambda_0$. However, we see $\lambda_r$ has a nonzero $\lambda_r$-coefficient, and by the product rule, the $\lambda_r$-coefficient of $\lambda_r^d$ will never vanish. Therefore,
$$0\leq \mu(\lambda_r^d)< \mu^+(\lambda_r^d)\leq p^{d-1}\mu^+(\lambda_r)^d=p^{d-1}.$$
\end{proof}
\begin{proposition}
We have
$$\frac{a_{p,e,d}}{p^{e(d-1)}}=c_{p,d}\frac{1-\frac{\mu(\lambda_r^d)^{e-1}}{p^{(e-1)(d-1)}}}{1-\frac{\mu(\lambda_r^d)}{p^{d-1}}}+\frac{2^d\mu(\lambda_r^d)^{e-1}\mu(w_1^d)}{p^{e(d-1)}}$$
and
$$b_{p,d}=\lim_{e \to \infty}\frac{a_{p,e,d}}{p^{e(d-1)}}=c_{p,d}\frac{1}{1-\frac{\mu(\lambda_r^d)}{p^{d-1}}}=\frac{\mu((2\delta_r+\lambda_r)^d-\lambda_r^d)}{p^{d-1}-\mu(\lambda_r^d)}=\frac{\mu((\delta_{r+1}+\delta_r)^d-\lambda_r^d)}{p^{d-1}-\mu(\lambda_r^d)}.$$
\end{proposition}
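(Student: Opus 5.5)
The plan is to start from the closed formula for $a_{p,e,d}$ in the preceding proposition. Dividing it by $p^{e(d-1)}$ exhibits the first $e-1$ terms as a finite geometric series in the ratio $q=\mu(\lambda_r^d)/p^{d-1}$. Concretely,
$$\frac{a_{p,e,d}}{p^{e(d-1)}}=c_{p,d}\sum_{j=0}^{e-2}q^j+\frac{2^d\mu(\lambda_r^d)^{e-1}\mu(w_1^d)}{p^{e(d-1)}},$$
because the $j$-th term $c_{p,d}\mu(\lambda_r^d)^jp^{(e-j)(d-1)}$ divided by $p^{e(d-1)}$ equals $c_{p,d}q^j$. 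Summing the geometric series gives $c_{p,d}(1-q^{e-1})/(1-q)$, which is the first displayed identity. The division by $1-q$ is legitimate because the preceding lemma gives $0\le \mu(\lambda_r^d)<p^{d-1}$, so $0\le q<1$.

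For the limit I will use the same lemma again. Since $0\le q<1$, we have $q^{e-1}\to 0$. The last term can be rewritten as $2^d\mu(w_1^d)\,q^{e-1}/p^{d-1}$, and $\mu(w_1^d)$ is a fixed constant independent of $e$, so this term also tends to $0$. Hence
$$b_{p,d}=\frac{c_{p,d}}{1-q}.$$
Substituting the definition $c_{p,d}=p^{-(d-1)}\mu((2\delta_r+\lambda_r)^d-\lambda_r^d)$ and clearing the factor $p^{d-1}$ gives the third expression in the statement.

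The final equality only requires checking that $2\delta_r+\lambda_r=\delta_r+\delta_{r+1}$ in $\Gamma_1$. This follows from $\lambda_r=(-1)^r(\delta_{r+1}-\delta_r)$ together with the fact that $r$ is even in Case 1, so $\lambda_r=\delta_{r+1}-\delta_r$ and therefore $2\delta_r+\lambda_r=\delta_r+\delta_{r+1}$.

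No step is a real obstacle; the only point needing care is the strict inequality $\mu(\lambda_r^d)<p^{d-1}$ from the lemma, which is what guarantees that both the geometric series and the error term converge.
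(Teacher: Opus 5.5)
Your proposal is correct and follows essentially the same route as the paper, which states this proposition without a separate proof as an immediate consequence of the preceding closed formula for $a_{p,e,d}$ and the lemma $0\le\mu(\lambda_r^d)<p^{d-1}$. You divide by $p^{e(d-1)}$, sum the geometric series in $q=\mu(\lambda_r^d)/p^{d-1}$, let $e\to\infty$ using $0\le q<1$, and finish with $2\delta_r+\lambda_r=\delta_r+\delta_{r+1}$ since $r$ is even in Case 1.
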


\textbf{Case 2: $p \equiv 3 \operatorname{mod} 4$.} In this case $r$ is odd.

The map $\varphi_e$ will reflect the digit after one $r$. We denote
$$\delta_r^*=\sum_{0 \leq a<r}(-1)^a\lambda_{a^*}=\sum_{r<a\leq p-1}(-1)^a\lambda_a=\delta_p-\delta_{r+1},\delta_{r+1}^*=\delta_p-\delta_r$$
and define
$$\delta_a^{\sigma(n)}=\begin{cases}
\delta_a & n \textup{ even}\\
\delta_a^* & n \textup{ odd.}
\end{cases}$$
We also notice that $\lambda_{r^*}=\lambda_r$.
\begin{proposition}The image $w_e$ of $v_e$ in $\Gamma_1(\mathbb{C})^{\otimes e}$ is given by  
$$\varphi_e(v_e)=\delta_{r}\otimes \delta_p\otimes \ldots \otimes \delta_p+(-\lambda_r)\otimes \delta_r^*\otimes \delta_p\otimes \ldots \otimes \delta_p+\ldots+(-\lambda_r)\otimes\ldots\otimes (-\lambda_r)\otimes(\frac{1}{2}\delta_r+\frac{1}{2}\delta_{r+1})^{\sigma(e-1)}.$$
\end{proposition}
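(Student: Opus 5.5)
The plan is to follow the proof of \Cref{3.3} closely. The new ingredient is that $r$ is odd, so each digit $r$ flips the parity tracked by $\sigma$. As before, it suffices to compute $\varphi_e(\delta_{\frac{p^e-1}{2}})$ and $\varphi_e(\delta_{\frac{p^e+1}{2}})$ and average them. The $p$-adic expansion of $\frac{p^e-1}{2}$ is $(rr\ldots r)_p$, so
$$\delta_{(r\ldots r)_p}=\sum_{a<(r\ldots r)_p}(-1)^a\lambda_a.$$
We split this sum into $e$ blocks. The $k$-th block ($k=1,\ldots,e$) consists of the indices whose first $k-1$ digits are $r$, whose $k$-th digit is $<r$, and whose remaining digits are arbitrary. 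Since $p$ is odd, $(-1)^a=(-1)^{\sum a_i}$.

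For a fixed index in the $k$-th block, we compute its image under $\varphi_e$ using the Proposition on unique factorization. The tensor factors are $\lambda_{a_j^{\sigma(\text{previous digits})}}$. In the first $k-1$ slots the digit is $r$, and $r^*=r$, so each of these factors is $\lambda_r$ regardless of parity. The sign contribution of these $k-1$ digits is $(-1)^{(k-1)r}=(-1)^{k-1}$, which we absorb as $(-\lambda_r)^{\otimes(k-1)}$. In slot $k$, the preceding digits sum to $(k-1)r$, whose parity is that of $k-1$. Hence the factor is $\lambda_{a_k^{\sigma(k-1)}}$, with sign $(-1)^{a_k}$.

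Summing over $0\le a_k<r$ gives
$$\sum_{a<r}(-1)^a\lambda_a=\delta_r \quad\text{if } k-1 \text{ is even},\qquad \sum_{a<r}(-1)^{a}\lambda_{a^*}=\delta_r^* \quad\text{if } k-1 \text{ is odd}.$$
In the second case the sign check is that $a^*$ and $a$ have the same parity because $p-1$ is even, which agrees with the stated definition of $\delta_r^*$. So slot $k$ contributes $\delta_r^{\sigma(k-1)}$.

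For the later slots $j>k$, we use the argument from $\varphi_e(\delta_{p^e})=\Delta_e$. Whatever the parity of the preceding digits, $a_j^{\sigma}$ runs over $\{0,\ldots,p-1\}$ with the same parity as $a_j$. Since everything factors as a tensor product, each later slot contributes $\delta_p$.

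This yields
$$\varphi_e(\delta_{\frac{p^e-1}{2}})=\sum_{k=1}^{e}(-\lambda_r)^{\otimes(k-1)}\otimes\delta_r^{\sigma(k-1)}\otimes\delta_p^{\otimes(e-k)}.$$
For $\frac{p^e+1}{2}=(r\ldots r,r+1)_p$, the computation is identical except that in the last block the final digit ranges over $a_e<r+1$. This gives $\delta_{r+1}^{\sigma(e-1)}$ in the last slot, where $\delta_{r+1}^*=\sum_{a\le r}(-1)^a\lambda_{a^*}=\delta_p-\delta_r$, consistent with the definition. Averaging the two expressions, and reading $(\frac12\delta_r+\frac12\delta_{r+1})^{\sigma(e-1)}$ as the corresponding linear combination of $\delta_r^{\sigma(e-1)}$ and $\delta_{r+1}^{\sigma(e-1)}$, gives the statement. (Note that the displayed formula writes the second term's middle factor as $\delta_r^*$, which is the case $k=2$, $\sigma(1)$.)

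The main obstacle is the sign and parity bookkeeping. Concretely, one must check three things:
\begin{enumerate}
\item the sign $(-1)^{(k-1)r}$ correctly becomes the factor $(-\lambda_r)^{\otimes(k-1)}$;
\item the parity that determines $\sigma$ in slot $k$ is $k-1$;
\item the starred sums are matched with the right signs.
\end{enumerate}
I would verify these on small cases such as $e=2$, $p=3$ before writing the general induction.
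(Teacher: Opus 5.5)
Your proposal is correct and follows the paper's proof. You split $\delta_{(p^e\mp1)/2}$ into the $e$ lexicographic blocks. You use $r^*=r$ and the oddness of $r$ to get the factors $(-\lambda_r)$ and the alternating slot $\delta_r^{\sigma(k-1)}$, collapse the later slots to $\delta_p$ as in the computation of $\varphi_e(\delta_{p^e})$, and average. Your explicit checks, including $\delta_{r+1}^*=\delta_p-\delta_r$ for the last block, are exactly the sign and parity bookkeeping the paper leaves implicit.
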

\begin{proof}
The proof is similar to \Cref{3.3}. We have
$$\varphi_e(\sum_{(00\ldots 0)_p\leq a=(a_e\ldots a_2a_1)_p<(r0\ldots 0)_p}(-1)^a\lambda_a)=\sum_{0\leq a<r}(-1)^a\lambda_a\otimes (\sum_{0\leq a<p}(-1)^a\lambda_a)^{e-1}=\delta_r\otimes \delta_p^{\otimes (e-1)},$$
\begin{align*}
\varphi_e(\sum_{(r0\ldots 0)_p\leq a=(a_e\ldots a_2a_1)_p<(rr\ldots 0)_p}(-1)^a\lambda_a)=(-\lambda_r)\otimes\sum_{0\leq a<r}(-1)^a\lambda_{a^*}\otimes (\sum_{0\leq a<p}(-1)^a\lambda_a)^{e-2}\\
=(-\lambda_r)\otimes\delta_r^*\otimes \delta_p^{\otimes (e-2)},    
\end{align*}
\begin{align*}
\varphi_e(\sum_{(rr0\ldots 0)_p\leq a=(a_e\ldots a_2a_1)_p<(rrr\ldots 0)_p}(-1)^a\lambda_a)\\
=(-\lambda_r)\otimes(-\lambda_r)\otimes\sum_{0\leq a<r}(-1)^a\lambda_{a}\otimes (\sum_{0\leq a<p}(-1)^a\lambda_a)^{e-3}\\
=(-\lambda_r)\otimes(-\lambda_r)\otimes\delta_r\otimes \delta_p^{\otimes (e-3)},    
\end{align*}
$\ldots$
$$\varphi_e(\sum_{(rr\ldots r0)_p\leq a=(a_e\ldots a_2a_1)_p<(rr\ldots r)_p}(-1)^a\lambda_a)=(-\lambda_r)^{\otimes (e-1)}\sum_{0\leq a<r}(-1)^a\lambda_{a^{\sigma(e-1)}}=(-\lambda_r)^{\otimes (e-1)}\otimes\delta_r^{\sigma(e-1)}.$$
Replacing $\delta_r$ with $\delta_r^*$ alternately and $\frac{1}{2}\delta_r+\frac{1}{2}\delta_{r+1}$ with $(\frac{1}{2}\delta_r+\frac{1}{2}\delta_{r+1})^{\sigma(e-1)}$, we get the result.
\end{proof}
Now we will set 
$$w_e^*=\delta_{r}^*\otimes \delta_p\otimes \ldots \otimes \delta_p+(-\lambda_r)\otimes \delta_r\otimes \delta_p\otimes \ldots \otimes \delta_p+\ldots+(-\lambda_r)\otimes\ldots\otimes (-\lambda_r)\otimes(\frac{1}{2}\delta_r+\frac{1}{2}\delta_{r+1})^{\sigma(e)}.$$
\begin{proposition}
\begin{enumerate}
\item $\tilde\ell(w_e)=\tilde\ell(w_e^*)=p^e/2$.
\item $w_1=\frac{1}{2}(\delta_r+\delta_{r+1})$, $w_1^*=\frac{1}{2}(\delta_r^*+\delta_{r+1}^*)=\frac{1}{2}(2\delta_p-\delta_r-\delta_{r+1})$, $w_e=\delta_r\otimes \Delta_{e-1}+(-\lambda_r)\otimes w_{e-1}^*$, $w_e^*=\delta_r^*\otimes \Delta_{e-1}+(-\lambda_r)\otimes w_{e-1}$.
\end{enumerate}    
\end{proposition}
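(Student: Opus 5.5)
Both parts follow from the explicit form of $w_e$ in the preceding proposition, together with the compatibility of $\ell$ and $\tilde\ell$ with $\varphi_e$. I would prove (2) first and then get (1) from it by induction.

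For (2), the case $e=1$ is immediate. The map $\varphi_1$ is the identity on $\Gamma_1$, so $w_1=v_1=\frac12(\delta_r+\delta_{r+1})$, since $\frac{p-1}{2}=r$ and $\frac{p+1}{2}=r+1$. For $w_1^*$, the defining formula with $e=1$ gives $(\frac12\delta_r+\frac12\delta_{r+1})^{\sigma(1)}=\frac12(\delta_r^*+\delta_{r+1}^*)$. Substituting $\delta_r^*=\delta_p-\delta_{r+1}$ and $\delta_{r+1}^*=\delta_p-\delta_r$ yields $\frac12(2\delta_p-\delta_r-\delta_{r+1})$.

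For the recursions, write $w_e$ as displayed in the preceding proposition, with summands indexed by the position of the first factor that is not $-\lambda_r$. The first summand is $\delta_r\otimes\Delta_{e-1}$. Factoring $(-\lambda_r)\otimes$ out of the remaining summands leaves the expression
$$\delta_r^*\otimes\delta_p^{\otimes(e-2)}+(-\lambda_r)\otimes\delta_r\otimes\delta_p^{\otimes(e-3)}+\ldots+(-\lambda_r)^{\otimes(e-2)}\otimes(\tfrac12\delta_r+\tfrac12\delta_{r+1})^{\sigma(e-1)}$$
in $e-1$ tensor slots. The starred and unstarred factors alternate, and the last factor carries $\sigma(e-1)$, which is exactly $\sigma$ of the new length $e-1$. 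So this expression is $w_{e-1}^*$ by definition. The same bookkeeping applied to $w_e^*$ gives $w_e^*=\delta_r^*\otimes\Delta_{e-1}+(-\lambda_r)\otimes w_{e-1}$.

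For (1), the first equality is the compatibility statement, part (3) of the earlier proposition: $\tilde\ell(w_e)=\ell(v_e)=\frac12\bigl(\frac{p^e-1}{2}+\frac{p^e+1}{2}\bigr)=\frac{p^e}{2}$. For $w_e^*$ I would induct on $e$ using (2), $\tilde\ell=\ell^{\otimes e}$, and the values $\ell(\lambda_r)=(-1)^r=-1$ (as $r$ is odd), $\ell(\delta_r)=r$, $\ell(\delta_r^*)=p-(r+1)=r$, and $\tilde\ell(\Delta_{e-1})=p^{e-1}$. The base case is $\ell(w_1^*)=\frac12(2p-r-(r+1))=\frac p2$. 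The inductive step is
$$\tilde\ell(w_e^*)=r\,p^{e-1}+(-1)(-1)\tfrac{p^{e-1}}{2}=\tfrac{p^{e-1}(2r+1)}{2}=\tfrac{p^e}{2}.$$
The same computation also rechecks $\tilde\ell(w_e)$ directly.

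The only delicate step is the index bookkeeping in (2): checking that after stripping the first factor the alternation of $\delta_r$ and $\delta_r^*$ restarts correctly, and that the parity superscript on the last factor matches the definition of $w_{e-1}^*$ (respectively $w_{e-1}$). I would verify this by comparing the $k$-th summands directly; everything else is routine.
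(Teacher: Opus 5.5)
Your argument is correct and is the verification the paper leaves implicit. The paper states this proposition without proof, as a direct read-off from the explicit formula for $\varphi_e(v_e)$ and the definition of $w_e^*$, with $\tilde\ell=\ell^{\otimes e}$ evaluated factor by factor. Your induction for $w_e^*$ is not actually needed: its recursion involves $w_{e-1}$, not $w_{e-1}^*$, and you already have $\tilde\ell(w_{e-1})=p^{e-1}/2$ from the compatibility of $\ell$ with $\varphi_{e-1}$.
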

\begin{proposition}
Let $d$ be a positive integer. The $d$-th power of $w_e$ satisfies the following recurrence relation:
\begin{align*}
w_e^d=(-\lambda_r)^d\otimes (w_{e-1}^*)^d+\sum_{1 \leq i \leq d}{d \choose i}\frac{p^{e(d-1)}}{2^{d-i}p^{d-1}}\delta_r^i(-\lambda_r)^{d-i}\otimes \Delta_{e-1},  
\end{align*}
\begin{align*}
(w_e^*)^d=(-\lambda_r)^d\otimes w_{e-1}^d+\sum_{1 \leq i \leq d}{d \choose i}\frac{p^{e(d-1)}}{2^{d-i}p^{d-1}}(\delta_r^*)^i(-\lambda_r)^{d-i}\otimes \Delta_{e-1}.
\end{align*}
\end{proposition}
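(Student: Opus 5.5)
The plan is to copy the proof of the recurrence in Case 1, working from the two recursive identities of the previous proposition:
$$w_e=\delta_r\otimes \Delta_{e-1}+(-\lambda_r)\otimes w_{e-1}^*,\qquad w_e^*=\delta_r^*\otimes \Delta_{e-1}+(-\lambda_r)\otimes w_{e-1}.$$
Since $\Gamma_1(\mathbb{C})^{\otimes e}=\Gamma_1(\mathbb{C})\otimes\Gamma_1(\mathbb{C})^{\otimes(e-1)}$ is a commutative ring and the two summands in each identity are pure tensors, the binomial theorem applies. It gives
$$w_e^d=\sum_{0\le i\le d}{d\choose i}\delta_r^i(-\lambda_r)^{d-i}\otimes \Delta_{e-1}^i(w_{e-1}^*)^{d-i},$$
and the analogous expansion of $(w_e^*)^d$, with $\delta_r^*$ in place of $\delta_r$ and $w_{e-1}$ in place of $w_{e-1}^*$.

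The $i=0$ term is $(-\lambda_r)^d\otimes (w_{e-1}^*)^d$, respectively $(-\lambda_r)^d\otimes w_{e-1}^d$. This is exactly the first term of each claimed formula.

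For $i\ge 1$, at least one factor $\Delta_{e-1}$ is present. Part (2) of the proposition on $\ell,\mu$ gives $w\Delta_{e-1}=\tilde\ell(w)\Delta_{e-1}$, and part (4) says $\tilde\ell$ is multiplicative. Together these give
$$\Delta_{e-1}^i(w_{e-1}^*)^{d-i}=\tilde\ell(\Delta_{e-1})^{i-1}\tilde\ell(w_{e-1}^*)^{d-i}\Delta_{e-1}.$$
The same identity holds with $w_{e-1}$ in place of $w_{e-1}^*$.

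It remains to evaluate the two values of $\tilde\ell$. First, $\tilde\ell(\Delta_{e-1})=\ell(\delta_p)^{e-1}=p^{e-1}$. Second, $\tilde\ell(w_{e-1})=\tilde\ell(w_{e-1}^*)=p^{e-1}/2$ by part (1) of the previous proposition. This is the only point where Case 2 differs from Case 1, and it is why $w^*$ causes no trouble: $w_e^*$ also has $\tilde\ell$-value $p^e/2$. If one wants to check this directly, use $\ell(\delta_r^*)=p-(r+1)=r$ and $\ell(-\lambda_r)=(-1)^{r+1}=1$ for $r$ odd, and induct on the recursion.

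Multiplying out,
$$p^{(e-1)(i-1)}\Big(\frac{p^{e-1}}{2}\Big)^{d-i}=\frac{p^{(e-1)(d-1)}}{2^{d-i}}.$$
This should be matched against the coefficient $\frac{p^{e(d-1)}}{2^{d-i}p^{d-1}}$ in the statement, and indeed $p^{e(d-1)}/p^{d-1}=p^{(e-1)(d-1)}$, so the two agree. Summing over $1\le i\le d$ gives both recurrences.

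I do not expect a real obstacle. The only care needed is in verifying $\tilde\ell(w_{e-1}^*)=p^{e-1}/2$, together with the base case $\tilde\ell(w_1^*)=\frac{1}{2}(2p-r-(r+1))=p/2$, and in keeping the signs $(-\lambda_r)$ as a single factor rather than expanding them.
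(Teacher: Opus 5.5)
Your proposal is correct and is essentially the paper's argument. The paper gives no separate proof for this Case 2 recurrence and relies on the Case 1 proof: binomially expand the recursion, then collapse every $i\ge 1$ term onto $\Delta_{e-1}$ using $w\Delta_{e-1}=\tilde\ell(w)\Delta_{e-1}$ and the multiplicativity of $\tilde\ell$. As you say, the only new input is $\tilde\ell(w_{e-1}^*)=p^{e-1}/2$, and your check of it, including the base case $\tilde\ell(w_1^*)=p/2$, is correct.
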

\begin{definition}
For $p$ prime and $d$ positive integer, we define
\begin{align*}
c_{p,d}'=\mu(\sum_{1 \leq i \leq d}{d\choose i}\frac{1}{2^{-i}p^{d-1}}\delta_r^i(-\lambda_r)^{d-i})=\frac{1}{p^{d-1}}\mu((2\delta_r-\lambda_r)^d-(-\lambda_r)^d),\\  c_{p,d}'^*=\mu(\sum_{1 \leq i \leq d}{d\choose i}\frac{1}{2^{-i}p^{d-1}}(\delta_r^*)^i(-\lambda_r)^{d-i})=\frac{1}{p^{d-1}}\mu((2\delta_r^*-\lambda_r)^d-(-\lambda_r)^d).
\end{align*}
\end{definition}
Here we have $2\delta_r-\lambda_r=\delta_r+\delta_{r+1}$ and $2\delta_r^*-\lambda_r=2\delta_p-\delta_r-\delta_{r+1}$. Therefore,
\begin{align*}
(2\delta_p-\delta_r-\delta_{r+1})^d=c\delta_p+(-\delta_r-\delta_{r+1})^d    
\end{align*}
for some $c$. Applying $\ell$ on both sides, we get $c=(1-(-1)^d)p^{d-1}$. Therefore, if $d$ is even, then $(2\delta_r^*-\lambda_r)^d=(2\delta_r-\lambda_r)^d$ and $c'_{p,d}=c_{p,d}'^*$. If $d$ is odd, then $(2\delta_r^*-\lambda_r)^d=2p^{d-1}\delta_p-(2\delta_r-\lambda_r)^d$.
\begin{proposition}
We have
$$a_{p,e,d}=c'_{p,d}p^{e(d-1)}+c_{p,d}'^*\mu(-\lambda_r^d)p^{(e-1)(d-1)}+\ldots+c_{p,d}'^{\sigma(e-2)}\mu(-\lambda_r^d)^{e-2}p^{2(d-1)}+2^d\mu(-\lambda_r^d)^{e-1}\mu((w_1^{\sigma(e-1)})^d).$$
\end{proposition}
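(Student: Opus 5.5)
We unroll the two coupled recurrences of the preceding proposition for $w_e^d$ and $(w_e^*)^d$, exactly as in Case 1, and then apply $2^d\tilde\mu$. First, write $u_e^{(0)}=w_e$ and $u_e^{(1)}=w_e^*$. The preceding proposition says
$$(u_e^{(s)})^d=(-\lambda_r)^d\otimes (u_{e-1}^{(1-s)})^d+\sum_{1\le i\le d}{d\choose i}\frac{p^{e(d-1)}}{2^{d-i}p^{d-1}}(\delta_r^{\sigma(s)})^i(-\lambda_r)^{d-i}\otimes\Delta_{e-1}.$$
Iterating this $e-1$ times starting from $w_e^d=(u_e^{(0)})^d$, the $k$-th step ($k=0,\ldots,e-2$) contributes
$$((-\lambda_r)^d)^{\otimes k}\otimes\Big(\sum_{1\le i\le d}{d\choose i}\frac{p^{(e-k)(d-1)}}{2^{d-i}p^{d-1}}(\delta_r^{\sigma(k)})^i(-\lambda_r)^{d-i}\Big)\otimes\Delta_{e-k-1}.$$
This holds because the superscript alternates: after $k$ steps we are looking at $u^{(k \bmod 2)}_{e-k}$, whose leading piece involves $\delta_r^{\sigma(k)}$. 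The remainder after $e-1$ steps is $((-\lambda_r)^d)^{\otimes(e-1)}\otimes (u_1^{(\sigma)})^d$, where $u_1^{(\sigma)}=w_1^{\sigma(e-1)}$. Here we use the convention $w_1^{\sigma(n)}=w_1$ or $w_1^*$, which matches the formula $w_1^*=\frac12(\delta_r^*+\delta_{r+1}^*)$.

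Second, we apply $2^d\tilde\mu=2^d\mu^{\otimes e}$. By multilinearity, the $k$-th term becomes the product of three factors. The first is $\mu((-\lambda_r)^d)^k$. The second is $2^d\mu(\cdots)$ of the middle factor, which by the definition of $c'_{p,d}$ and $c'^*_{p,d}$ equals $c_{p,d}'^{\sigma(k)}p^{(e-k)(d-1)}$; here the factor $2^d/2^{d-i}=2^i$ matches the $\frac{1}{2^{-i}}$ in the definition, and $c'^{\sigma(k)}$ means $c'$ for $k$ even and $c'^*$ for $k$ odd. The third is $\tilde\mu(\Delta_{e-k-1})=1$. The last term gives $2^d\mu((-\lambda_r)^d)^{e-1}\mu((w_1^{\sigma(e-1)})^d)$. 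Summing over $k$ yields the stated formula.

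The only point requiring care is notation. The statement writes $\mu(-\lambda_r^d)$, which should be read as $\mu((-\lambda_r)^d)=(-1)^d\mu(\lambda_r^d)$. For the formula to hold literally for all $d$, this must be interpreted in that way: the sign $(-1)^{dk}$ is exactly what the recurrence produces. The main (minor) obstacle is therefore bookkeeping. One must check that the alternation of $w$ and $w^*$ along the recursion gives $c'^{\sigma(k)}$ at step $k$ and $w_1^{\sigma(e-1)}$ at the end. This can be verified by a short induction on $e$, using the two recurrences $w_e=\delta_r\otimes\Delta_{e-1}+(-\lambda_r)\otimes w_{e-1}^*$ and $w_e^*=\delta_r^*\otimes\Delta_{e-1}+(-\lambda_r)\otimes w_{e-1}$. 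Alternatively, one can prove simultaneously the companion formula for $2^d\tilde\mu((w_e^*)^d)$, which has all $\sigma$'s shifted by one. The base case $e=1$ is $a_{p,1,d}=2^d\mu(w_1^d)$, which is immediate.
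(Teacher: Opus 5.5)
Your proposal is correct and takes the same approach as the paper. The paper proves Case 2 by analogy with its Case 1 argument: unroll the coupled recurrences for $w_e^d$ and $(w_e^*)^d$, so that the $k$-th layer carries $\delta_r^{\sigma(k)}$ and the tail ends in $w_1^{\sigma(e-1)}$, then apply $2^d\tilde\mu=2^d\mu^{\otimes e}$ using $\mu(\delta_p)=1$. Your reading of $\mu(-\lambda_r^d)$ as $\mu((-\lambda_r)^d)$ is also right, since it is the only reading consistent with the paper's later summary formula involving $(-1)^{rd}\mu((\delta_{r+1}-\delta_r)^d)$.
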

\begin{proposition}
Set $e_0=\lceil (e-1)/2 \rceil$ and $e_1=e-1-e_0$. We have
$$\frac{a_{p,e,d}}{p^{e(d-1)}}=c_{p,d}'\frac{1-\frac{\mu(-\lambda_r^d)^{2e_0}}{p^{2e_0(d-1)}}}{1-\frac{\mu(-\lambda_r^d)^2}{p^{2(d-1)}}}+c_{p,d}'^*\frac{\mu(-\lambda_r^d)}{p^{d-1}}\frac{1-\frac{\mu(-\lambda_r^d)^{2e_1}}{p^{2e_1(d-1)}}}{1-\frac{\mu(-\lambda_r^d)^2}{p^{2(d-1)}}}+\frac{2^d\mu(-\lambda_r^d)^{e-1}\mu((w_1^{\sigma(e-1)})^d)}{p^{e(d-1)}}$$
and
$$b_{p,d}=\lim_{e \to \infty}\frac{a_{p,e,d}}{p^{e(d-1)}}=c_{p,d}'\frac{1}{1-\frac{\mu(-\lambda_r^d)^2}{p^{2(d-1)}}}+c_{p,d}'^*\frac{\mu(-\lambda_r^d)}{p^{d-1}}\frac{1}{1-\frac{\mu(-\lambda_r^d)^2}{p^{2(d-1)}}}.$$
\end{proposition}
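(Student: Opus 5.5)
The plan is to start from the closed formula for $a_{p,e,d}/p^{e(d-1)}$ in the previous proposition (the case $p\equiv 3 \bmod 4$), split it into three pieces, and let $e\to\infty$ in each separately. Write $q=\mu(-\lambda_r^d)/p^{d-1}$. Since $\mu$ is linear, $\mu(-\lambda_r^d)=-\mu(\lambda_r^d)$ up to the sign convention used in the formula. In either case the lemma $0\leq\mu(\lambda_r^d)<p^{d-1}$, which was proved for arbitrary $r$ and does not depend on the parity of $r$, gives $|q|<1$ and hence $q^2<1$.

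First, the two geometric-type terms. The first is $c'_{p,d}(1-q^{2e_0})/(1-q^2)$ and the second is $c_{p,d}'^*\,q\,(1-q^{2e_1})/(1-q^2)$. Here $e_0=\lceil (e-1)/2\rceil$ and $e_1=e-1-e_0=\lfloor (e-1)/2\rfloor$, so both tend to infinity with $e$. Because $q^2<1$, we get $q^{2e_0},q^{2e_1}\to 0$. The limits are therefore $c'_{p,d}/(1-q^2)$ and $c_{p,d}'^*q/(1-q^2)$. This is exactly the stated expression once $q$ is written out as $\mu(-\lambda_r^d)/p^{d-1}$.

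Second, the remainder term $2^d\mu(-\lambda_r^d)^{e-1}\mu((w_1^{\sigma(e-1)})^d)/p^{e(d-1)}$. Rewrite it as $2^d q^{e-1}\mu((w_1^{\sigma(e-1)})^d)/p^{d-1}$. The factor $\mu((w_1^{\sigma(e-1)})^d)$ takes only two values, $\mu(w_1^d)$ and $\mu((w_1^*)^d)$, so it is bounded independently of $e$. Since $|q|<1$, the term tends to $0$. Adding the three limits proves that $b_{p,d}$ exists and equals the displayed formula.

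The only point needing care is the bound $|\mu(-\lambda_r^d)|<p^{d-1}$, so I will confirm that the earlier lemma's proof transfers verbatim. That proof uses only that $\lambda_r^d$ lies in the positive cone $\sum\mathbb{N}\lambda_i$ and has a nonvanishing $\lambda_r$-coefficient, and both facts come from \Cref{2.3} without any parity assumption on $r$. The sign coming from $(-1)^d$ affects only the sign of $q$, not its absolute value. Everything else is routine evaluation of geometric series.
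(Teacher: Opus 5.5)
There is a gap. Your argument for the limit is correct, and it follows the route the paper takes. The paper writes no separate proof here; it relies on the previous proposition and the bound $0\le\mu(\lambda_r^d)<p^{d-1}$, whose proof indeed never uses the parity of $r$.

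What is missing is a proof of the first displayed identity, which is half of the statement. You attribute it to ``the closed formula \ldots in the previous proposition''. That proposition, however, only gives $a_{p,e,d}$ as a finite sum,
\[
a_{p,e,d}=\sum_{j=0}^{e-2}c_{p,d}'^{\sigma(j)}\mu(-\lambda_r^d)^{j}p^{(e-j)(d-1)}+2^d\mu(-\lambda_r^d)^{e-1}\mu((w_1^{\sigma(e-1)})^d),
\]
where $c_{p,d}'^{\sigma(j)}$ is $c'_{p,d}$ for even $j$ and $c_{p,d}'^*$ for odd $j$. The closed form involving $e_0,e_1$ is exactly what has to be derived from this sum.

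The missing step is short:
\begin{enumerate}
\item Divide by $p^{e(d-1)}$. With $q=\mu(-\lambda_r^d)/p^{d-1}$, this gives $\sum_{j=0}^{e-2}c_{p,d}'^{\sigma(j)}q^{j}$ plus the remainder term.
\item The even indices in $\{0,\ldots,e-2\}$ are $j=2k$ with $0\le k\le e_0-1$. There are $\lfloor (e-2)/2\rfloor+1=\lceil (e-1)/2\rceil=e_0$ of them.
\item The odd indices are $j=2k+1$ with $0\le k\le e_1-1$.
\item So the sum equals $c'_{p,d}\sum_{k=0}^{e_0-1}q^{2k}+c_{p,d}'^*\,q\sum_{k=0}^{e_1-1}q^{2k}$.
\item Sum these two finite geometric series; this is legitimate because $q^2<1$ gives $1-q^2\neq 0$. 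The result is the stated formula.
\end{enumerate}
Only after this does your passage to the limit apply.

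A minor point: in the paper, $\mu(-\lambda_r^d)$ means $\mu((-\lambda_r)^d)=(-1)^d\mu(\lambda_r^d)$, coming from the factor $(-\lambda_r)^d$ in the recurrence. It does not mean $-\mu(\lambda_r^d)$. As you observe, only $|q|$ matters, so this does not affect your argument.
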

Now we discuss according to parity of $d$. If $d$ is even, then $c_{p,d}'=c_{p,d}'^*$ and
\begin{align*}
b_{p,d}=c_{p,d}'\frac{1}{1-\frac{\mu(-\lambda_r^d)}{p^{(d-1)}}}=\frac{\mu((2\delta_r-\lambda_r)^d-(-\lambda_r)^d)}{p^{d-1}-\mu(-\lambda_r^d)}=\frac{\mu((\delta_{r+1}+\delta_r)^d-(-\lambda_r)^d)}{p^{d-1}-\mu(-\lambda_r^d)}    .
\end{align*}
If $d$ is odd, then
\begin{align*}
b_{p,d}=c_{p,d}'\frac{1}{1-\frac{\mu(-\lambda_r^d)^2}{p^{2(d-1)}}}+c_{p,d}'^*\frac{\mu(-\lambda_r^d)}{p^{d-1}}\frac{1}{1-\frac{\mu(-\lambda_r^d)^2}{p^{2(d-1)}}}\\
=\frac{\mu((2\delta_r-\lambda_r)^d-(-\lambda_r)^d)}{p^{d-1}}\frac{1}{1-\frac{\mu(-\lambda_r^d)^2}{p^{2(d-1)}}}+\frac{\mu((2\delta_r^*-\lambda_r)^d-(-\lambda_r)^d)}{p^{d-1}}\frac{\mu(-\lambda_r^d)}{p^{d-1}}\frac{1}{1-\frac{\mu(-\lambda_r^d)^2}{p^{2(d-1)}}}\\
=\frac{\mu((2\delta_r-\lambda_r)^d-(-\lambda_r)^d)}{p^{d-1}}\frac{1}{1-\frac{\mu(-\lambda_r^d)^2}{p^{2(d-1)}}}\\
+\frac{\mu(2p^{d-1}\delta_p-(2\delta_r-\lambda_r)^d-(-\lambda_r)^d)}{p^{d-1}}\frac{\mu(-\lambda_r^d)}{p^{d-1}}\frac{1}{1-\frac{\mu(-\lambda_r^d)^2}{p^{2(d-1)}}}.
\end{align*}
Set $p^{d-1}=A,\mu(-\lambda_r^d)=B,\mu(2\delta_r-\lambda_r)=C$, the above is equal to
$$\frac{C-B}{A}\frac{1}{1-B^2/A^2}+\frac{2A-C-B}{A}\frac{B}{A}\frac{1}{1-B^2/A^2}=\frac{B+C}{A+B}.$$
Therefore,
$$b_{p,d}=\frac{\mu((2\delta_r-\lambda_r)^d+(-\lambda_r)^d)}{p^{d-1}+\mu(-\lambda_r^d)}=\frac{\mu((\delta_{r+1}+\delta_r)^d+(-\lambda_r)^d)}{p^{d-1}+\mu(-\lambda_r^d)}.$$

Now we summarize both cases $p \equiv 1 \operatorname{mod} 4$ and $p \equiv 3 \operatorname{mod} 4$, we get$$b_{p,d}=\frac{\mu((\delta_r+\delta_{r+1})^d)-(-1)^{rd}\mu((\delta_{r+1}-\delta_r)^d)}{p^{d-1}-(-1)^{rd}\mu((\delta_{r+1}-\delta_r)^d)}=1+\frac{\mu((\delta_r+\delta_{r+1})^d)-p^{d-1}}{p^{d-1}-(-1)^{rd}\mu((\delta_{r+1}-\delta_r)^d)},$$
which recovers \cite[Corollary 3.3]{PSSY}. Moreover, the computation actually gives the Hilbert-Kunz function, which has the form $b_{p,d}p^{e(d-1)}+C(e)\mu((-1)^r\lambda_r^d)^e$. Here $C(e)$ is a constant for $d$ even and is periodic of period 2 for $d$ odd. This $C(e)$ can be computed from products in $\Gamma_1$.

\section{Analytic description of $b_{p,d}$}
The main theorem in \cite{PSSY} expresses $b_{p,d}$ in terms of a fraction whose denominator and numerator involve Ehrhart polynomials. This expression leads to the fact that $b_{p,d}$ is a rational function in $p^2$ with rational coefficients. By analyzing the coefficients, we can prove that $p \mapsto b_{p,d}$ is decreasing for $p \gg 0$.

On the other hand, the re-verified previous result \cite[Corollary 3.3]{PSSY} allows us to compute $b_{p,d}$ using products in $\Gamma_1(\mathbb{C})$. Following this result, we will apply Gelfand transform to compute the power of an element. This method yields an analytic description of $b_{p,d}$, which allows us to derive its asymptotic behavior and monotonicity for all $p$ in the next section.
\subsection{Gelfand transform of $\Gamma_1(\mathbb{C})$ and action of $\mu$ on powers}
Following the notations of \cite{BensonBanach}, we will call a ring homomorphism $\Gamma_1 \to \mathbb{C}$ a species of $\Gamma_1$. They naturally extend to $\Gamma_1(\mathbb{C}) \to \mathbb{C}$ by extending the scalars. By \cite{Green} $\Gamma_1(\mathbb{C})$ is a semisimple $\mathbb{C}$-algebra, and it is finite-dimensional, so there are finitely many species. Therefore, the Chinese remainder theorem says that the Gelfand transform gives an isomorphism between the ring $\Gamma_1(\mathbb{C})$ and the ring of $\mathbb{C}$-valued functions on its maximal spectrum which is in one-to-one correspondence with the set of species. Here we do not discuss the $C^*$-algebra structure of $\Gamma_1(\mathbb{C})$.

\begin{theorem}[\cite{Green}]
The following list gives all species $\Gamma_1 \to \mathbb{C}$:
\begin{enumerate}
\item for $1 \leq k \leq p-1$, $s_k: \Gamma_1 \to \mathbb{C}$, $\delta_i \to \frac{\sin(ik\pi/p)}{\sin(k\pi/p)}$.
\item $s_p: \Gamma_1 \to \mathbb{C}$, $\delta_i \to i$.
\end{enumerate}
Let $s=(s_1,\ldots,s_p)$, then there is an isomorphism of rings
$$\Gamma_1(\mathbb{C}) \xrightarrow[]{s} \prod_{1 \leq i \leq p}\mathbb{C},$$
where the product on the right side is taken componentwisely.
\end{theorem}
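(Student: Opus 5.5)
To prove the theorem, I would check three things: that each $s_k$ is a well-defined ring homomorphism, that the $p$ species are pairwise distinct, and then conclude by semisimplicity and a dimension count.

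\textbf{Step 1: each $s_k$ is a ring homomorphism.} The ring $\Gamma_1$ is generated as a ring by $\delta_2$. Indeed, the Clebsch--Gordan type rule $\delta_2\delta_i=\delta_{i-1}+\delta_{i+1}$ holds for $1\le i\le p-1$, and $\delta_2\delta_p=2\delta_p$ holds by the projectivity of $\delta_p$, i.e.\ part (1) of the earlier proposition with $\ell(\delta_2)=2$. The first rule follows from \Cref{2.3} after rewriting in the $\lambda$-basis, or can be checked directly on $\mathbbm{k}$-objects. By induction, $\delta_{i+1}=\delta_2\delta_i-\delta_{i-1}$, so $\delta_i=U_{i-1}(\delta_2/2)$, where $U$ is the Chebyshev polynomial of the second kind. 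Therefore
$$\Gamma_1(\mathbb{C})\cong\mathbb{C}[x]/(f(x)),\qquad x=\delta_2,$$
where $f$ is the monic degree-$p$ polynomial expressing the relation $x\,\delta_p=2\delta_p$, namely $f(x)=(x-2)\,U_{p-1}(x/2)$. This relation is the only one needed, since $1,x,\ldots,x^{p-1}$ already span $\Gamma_1(\mathbb{C})$, which has dimension $p$.

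\textbf{Step 2: identify the species.} A species is determined by its value at $x$, and that value must be a root of $f$. The roots of $U_{p-1}(x/2)$ are $x=2\cos(k\pi/p)$ for $1\le k\le p-1$. The remaining root is $x=2$. These $p$ roots are distinct, so $f$ is separable and $\Gamma_1(\mathbb{C})\cong\mathbb{C}^p$ by the Chinese remainder theorem, which recovers the semisimplicity.

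Evaluating $\delta_i=U_{i-1}(x/2)$ at each root gives:
\begin{itemize}
\item at $x=2\cos(k\pi/p)$, the value $\sin(ik\pi/p)/\sin(k\pi/p)$, which is $s_k$;
\item at $x=2$, the value $U_{i-1}(1)=i$, which is $s_p=\ell$.
\end{itemize}
The Chinese remainder isomorphism is exactly the map $s=(s_1,\ldots,s_p)$.

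\textbf{Main obstacle.} The only nontrivial input is the multiplication rule $\delta_2\delta_i=\delta_{i-1}+\delta_{i+1}$ for $i<p$. I would derive it from \Cref{2.3} with $\lambda_1$: since $\delta_2=\lambda_0-\lambda_1$, the rule gives $\lambda_1\lambda_j=\lambda_{j-1}+\lambda_{j+1}$ for $1\le j<p-1$, and $\lambda_1\lambda_{p-1}=\lambda_{p-2}$ by the $\min$ bound. Expanding $\delta_i=\sum_{j<i}(-1)^j\lambda_j$ then telescopes to the claim. The boundary case $i=p$ needs the separate projectivity argument given in Step 1.
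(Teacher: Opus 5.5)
Your overall strategy is correct, and it differs from the paper, which gives no argument here. The paper quotes both the semisimplicity of $\Gamma_1(\mathbb{C})$ and the list of species from Green, then applies the Chinese remainder theorem. You instead derive everything from the rules $\delta_2\delta_i=\delta_{i-1}+\delta_{i+1}$ for $i<p$ and $\delta_2\delta_p=2\delta_p$, via the presentation $\Gamma_1(\mathbb{C})\cong\mathbb{C}[x]/((x-2)U_{p-1}(x/2))$. What this buys is a self-contained proof. Semisimplicity becomes an output (the separability of $f$) rather than an input, and the formulas for $s_k$ follow from $U_{i-1}(\cos\theta)=\sin(i\theta)/\sin\theta$. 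The spanning and dimension argument, the identification of the roots, and the evaluations are all fine.

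There is, however, a slip in the one step you flagged as the main input. You misread \Cref{2.3}: its sum runs over \emph{all} consecutive $k$ from $j-i$ to $\min\{i+j,2p-2-i-j\}$. So for $i=1$ and $1\le j\le p-2$ it gives $\lambda_1\lambda_j=\lambda_{j-1}+\lambda_j+\lambda_{j+1}$, not $\lambda_{j-1}+\lambda_{j+1}$. Your two-term version contradicts multiplicativity of $\ell$: $\ell(\lambda_1^2)=1$, but $\ell(\lambda_0+\lambda_2)=2$. Fed into your telescoping, it would give $\delta_2^2=\delta_2+\delta_3$.

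With the correct three-term rule the computation does work. For $2\le i\le p-1$ (so every $j\le i-1\le p-2$),
\[
\lambda_1\delta_i=\lambda_1+\sum_{j=1}^{i-1}(-1)^j(\lambda_{j-1}+\lambda_j+\lambda_{j+1})=\lambda_1-\delta_{i-1}+(\delta_i-\lambda_0)-(\delta_{i+1}-\lambda_0+\lambda_1)=\delta_i-\delta_{i-1}-\delta_{i+1},
\]
hence $\delta_2\delta_i=\delta_i-\lambda_1\delta_i=\delta_{i-1}+\delta_{i+1}$.

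Your alternative of checking directly on $\mathbbm{k}$-objects also works. On $\mathbbm{k}[T_1,T_2]/(T_1^2,T_2^i)$ with $T=T_1+T_2$, the kernel of $T$ is $2$-dimensional. Moreover $T^i\cdot 1=iT_1T_2^{i-1}\neq 0$ and $T^{i+1}=0$ for $i<p$. So the Jordan blocks have sizes $i+1$ and $i-1$. The error is local and easily repaired; it does not affect the strategy.
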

\begin{lemma}
For any $w \in \Gamma_1(\mathbb{C})$, we have
$$\mu(w)=\frac{1}{p}(\sum_{1 \leq i \leq p-1}(1+\cos\frac{i\pi}{p})s_i(w)+s_p(w)).$$ 
Moreover for any positive integer $d$, we have
$$\mu(w^d)=\frac{1}{p}(\sum_{1 \leq i \leq p-1}(1+\cos\frac{i\pi}{p})s_i(w)^d+s_p(w)^d).$$    
\end{lemma}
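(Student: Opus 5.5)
The plan is to use the fact that $s=(s_1,\ldots,s_p)$ is a ring isomorphism $\Gamma_1(\mathbb{C})\to\prod_{1\le i\le p}\mathbb{C}$, which is the theorem of \cite{Green} quoted just above. Since $\mu$ is a $\mathbb{C}$-linear functional on $\Gamma_1(\mathbb{C})$, and the species $s_1,\ldots,s_p$ are the coordinate functionals of this isomorphism, they form a basis of the dual space. So there are unique constants $c_1,\ldots,c_p$ with $\mu=\sum_k c_k s_k$. To prove the first formula it is enough to check that $c_k=\frac{1}{p}(1+\cos\frac{k\pi}{p})$ for $k<p$ and $c_p=\frac1p$ give the right values on one basis of $\Gamma_1(\mathbb{C})$.

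The second formula then follows at once. Each $s_i$ is a ring homomorphism, so $s_i(w^d)=s_i(w)^d$, and we apply the first formula to $w^d$.

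For the verification I will use the $\lambda$-basis, because $\mu$ is simplest there: $\mu(\lambda_0)=1$ and $\mu(\lambda_j)=0$ for $1\le j\le p-1$. From Green's formula and a sum-to-product identity,
$$s_k(\lambda_j)=(-1)^j\frac{\sin\frac{(j+1)k\pi}{p}-\sin\frac{jk\pi}{p}}{\sin\frac{k\pi}{p}}=(-1)^j\frac{\cos\frac{(2j+1)k\pi}{2p}}{\cos\frac{k\pi}{2p}},\qquad s_p(\lambda_j)=(-1)^j.$$
Write $1+\cos\frac{k\pi}{p}=2\cos^2\frac{k\pi}{2p}$. The denominator cancels, and a product-to-sum identity gives
$$\sum_k c_ks_k(\lambda_j)=\frac{(-1)^j}{p}\Big(1+\sum_{k=1}^{p-1}\big(\cos\tfrac{jk\pi}{p}+\cos\tfrac{(j+1)k\pi}{p}\big)\Big).$$

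The only real computation is the elementary sum $\sum_{k=1}^{p-1}\cos\frac{mk\pi}{p}$. I will evaluate it by summing $e^{imk\pi/p}$ over $0\le k<2p$, which gives $0$ unless $2p\mid m$, and then pairing $k$ with $2p-k$. This gives $p-1$ when $m=0$, and $\frac{-1-(-1)^m}{2}$ when $1\le m\le p$.

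For $j=0$ the bracket is $1+(p-1)+0=p$, so the sum is $1=\mu(\lambda_0)$. For $1\le j\le p-1$, both $m=j$ and $m=j+1$ lie in $[1,p]$, so the two cosine sums add to $-1$ and the bracket vanishes, matching $\mu(\lambda_j)=0$. This completes the check, and hence both identities.
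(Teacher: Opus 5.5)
Your proposal is correct, and the trigonometric steps check out: $s_k(\lambda_j)=(-1)^j\cos\frac{(2j+1)k\pi}{2p}/\cos\frac{k\pi}{2p}$; the weight $1+\cos\frac{k\pi}{p}=2\cos^2\frac{k\pi}{2p}$ cancels the denominator; and $\sum_{k=1}^{p-1}\cos\frac{mk\pi}{p}=\frac{-1-(-1)^m}{2}$ for $1\le m\le p$ makes the bracket $p$ for $j=0$ and $0$ for $1\le j\le p-1$.

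The overall frame is the paper's: write $\mu$ in terms of the species, then get the $d$-th power formula from multiplicativity of the $s_i$. The core computation is different, though. The paper \emph{derives} the coefficients. It works in the $\delta$-basis, where $\mu(\delta_i)=1$, and reads off $c_p=\frac1p$ from the row of $\delta_p$, on which every $s_k$ with $k<p$ vanishes. It then inverts the $(p-1)\times(p-1)$ matrix $\bigl(\sin\frac{ij\pi}{p}/\sin\frac{j\pi}{p}\bigr)$ by orthogonality of sines, and finally evaluates the weighted sum $\sum_j\sin\frac{ij\pi}{p}\frac{p-j}{p}$.

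You instead \emph{verify} the stated coefficients in the $\lambda$-basis, where $\mu$ is just the $\lambda_0$-coordinate, so everything reduces to one root-of-unity sum. Your route is shorter and fully self-contained; the paper states its weighted sine sum without proof. The cost is that you need the answer in advance. The paper's inversion produces the coefficients, and its explicit $B^{-1}$ would expand any other linear functional in terms of species.

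A small point: for a verification argument you do not need that $s_1,\dots,s_p$ form a basis of the dual space. Two linear functionals that agree on $\lambda_0,\dots,\lambda_{p-1}$ are equal. So Green's theorem enters your proof only through the multiplicativity of the $s_i$, which the second formula uses.
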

\begin{proof}
The second statement is clear from the first since $s_i$'s are ring homomorphisms, so commute with taking $d$-th power. We note that $s_i \in \operatorname{Hom}_\mathbb{C}(\Gamma_1(\mathbb{C}),\mathbb{C}), 1 \leq i \leq p$ are linearly independent in $\operatorname{Hom}_\mathbb{C}(\Gamma_1(\mathbb{C}),\mathbb{C})$ because if $s_1,\ldots,s_p$ are linearly dependent, then the image of $s$ is a proper subspace of $\mathbb{C}^p$, which is a contradiction. So they form a $\mathbb{C}$-basis of $\operatorname{Hom}_\mathbb{C}(\Gamma_1(\mathbb{C}),\mathbb{C})$ and we can write $\mu=\sum_{1 \leq j \leq p}c_js_j:\Gamma_1(\mathbb{C}) \to \mathbb{C}$ for some $c_1,\ldots,c_p \in \mathbb{C}$. This means $\mu(\delta_i)=\sum_{1 \leq j \leq p}c_js_j(\delta_i)$ for any $i$. Now we solve for such $c_i$. Let
$$A=\begin{pmatrix}
s_1(\delta_1) & s_2(\delta_1) & \ldots & s_p(\delta_1)\\
s_1(\delta_2) & s_2(\delta_2) & \ldots & s_p(\delta_2)\\ 
\vdots & \vdots & \ddots & \vdots\\ 
s_1(\delta_p) & s_2(\delta_p) & \ldots & s_p(\delta_p)
\end{pmatrix}$$
In other words,
$$A=\begin{pmatrix}
\frac{\sin(\pi/p)}{\sin(\pi/p)} & \frac{\sin(2\pi/p)}{\sin(2\pi/p)} & \ldots & \frac{\sin((p-1)\pi/p)}{\sin((p-1)\pi/p)} & 1\\
\frac{\sin(2\pi/p)}{\sin(\pi/p)} & \frac{\sin(4\pi/p)}{\sin(2\pi/p)} & \ldots & \frac{\sin(2(p-1)\pi/p)}{\sin((p-1)\pi/p)} & 2\\ 
\vdots & \vdots & \ddots & \vdots & \vdots\\
\frac{\sin((p-1)\pi/p)}{\sin(\pi/p)} & \frac{\sin(2(p-1)\pi/p)}{\sin(2\pi/p)} & \ldots & \frac{\sin((p-1)^2\pi/p)}{\sin((p-1)\pi/p)} & p-1\\ 
0 & 0 & \ldots & 0 & p
\end{pmatrix}$$
Then
$$(1,1,\ldots,1)^T=A(c_1,\ldots,c_p)^T.$$
Note that this implies $c_p=1/p$. Therefore, let $B$ be the upper left submatrix of order $(p-1)$:
$$B=\begin{pmatrix}
\frac{\sin(\pi/p)}{\sin(\pi/p)} & \frac{\sin(2\pi/p)}{\sin(2\pi/p)} & \ldots & \frac{\sin((p-1)\pi/p)}{\sin((p-1)\pi/p)}\\
\frac{\sin(2\pi/p)}{\sin(\pi/p)} & \frac{\sin(4\pi/p)}{\sin(2\pi/p)} & \ldots & \frac{\sin(2(p-1)\pi/p)}{\sin((p-1)\pi/p)}\\ 
\vdots & \vdots & \ddots & \vdots &\\
\frac{\sin((p-1)\pi/p)}{\sin(\pi/p)} & \frac{\sin(2(p-1)\pi/p)}{\sin(2\pi/p)} & \ldots & \frac{\sin((p-1)^2\pi/p)}{\sin((p-1)\pi/p)}
\end{pmatrix}$$
we see 
$$(1-\frac{1}{p},1-\frac{2}{p},\ldots,\frac{1}{p})^T=B(c_1,\ldots,c_{p-1})^T$$
and
$$B^{-1}(1-\frac{1}{p},1-\frac{2}{p},\ldots,\frac{1}{p})^T=(c_1,\ldots,c_{p-1})^T.$$
Note that if $p,m$ are two positive integers, we can verify
$$\sum_{1 \leq i \leq p-1}\cos(\frac{im\pi}{p})=\sum_{1 \leq i \leq p-1}\operatorname{Re}(e^{\frac{im\pi\sqrt{-1}}{p}})=\begin{cases}
p-1 & 2p|m\\
-1 & 2|m,2p\nmid m\\
0 & 2 \nmid m.
\end{cases}$$
Now let $1 \leq j,k \leq p-1$ be two integers, and we have
$$\sin(\frac{ij\pi}{p})\sin(\frac{ik\pi}{p})=\frac{1}{2}(\cos(\frac{i(j-k)\pi}{p})-\cos(\frac{i(j+k)\pi}{p})).$$
We have: $0\leq |j-k|\leq p-1,2\leq j+k\leq 2p-2$. So if $j \neq k$, then $2p\nmid j-k,2p \nmid j+k$ and $j-k$ and $j+k$ have the same parity. Thus
$$\sum_{1 \leq i \leq p-1}\sin(\frac{ij\pi}{p})\sin(\frac{ik\pi}{p})=\begin{cases}
0 & j \neq k\\
\frac{1}{2}(p-1-(-1))=\frac{p}{2} & j=k.
\end{cases}$$
So we get
$$\tiny{B^{-1}=\frac{2}{p}\begin{pmatrix}
\sin(\pi/p)\sin(\pi/p) & \sin(2\pi/p)\sin(\pi/p) & \ldots & \sin((p-1)\pi/p)\sin(\pi/p)\\
\sin(2\pi/p)\sin(2\pi/p) & \sin(4\pi/p)\sin(2\pi/p) & \ldots & \sin(2(p-1)\pi/p)\sin(2\pi/p)\\ 
\vdots & \vdots & \ddots & \vdots &\\
\sin((p-1)\pi/p)\sin((p-1)\pi/p) & \sin(2(p-1)\pi/p)\sin((p-1)\pi/p) & \ldots & \sin((p-1)^2\pi/p)\sin((p-1)\pi/p)
\end{pmatrix}}$$
and for $1 \leq i \leq p-1$,
$$c_i=\frac{2}{p}\sin(\frac{i\pi}{p})\sum_{1 \leq j \leq p-1}\sin(\frac{ij\pi}{p})\frac{p-j}{p}.$$
We have for any $1 \leq i \leq p-1$,
$$\sum_{1 \leq j \leq p-1}\sin(\frac{ij\pi}{p})\frac{p-j}{p}=\frac{\sin(\frac{i\pi}{p})}{2-2\cos(\frac{i\pi}{p})},$$
so
$$c_i=\frac{1}{p}\frac{\sin^2(\frac{i\pi}{p})}{1-\cos(\frac{i\pi}{p})}=\frac{1+\cos(\frac{i\pi}{p})}{p}.$$
\end{proof}

\subsection{Computation of $b_{p,d}$}
We use Gelfand transform to compute
$$b_{p,d}=1+\frac{\mu((\delta_r+\delta_{r+1})^d)-p^{d-1}}{p^{d-1}-(-1)^{rd}\mu((\delta_{r+1}-\delta_r)^d)}.$$

We first compute the action of species on elements. Let $A_i$ be the $(r,i)$-th entry of $A$, which is
$$A_i=\frac{\sin\frac{ir\pi}{p}}{\sin\frac{i\pi}{p}}.$$
For $1 \leq i \leq p-1$, we have $\frac{i(r+1)\pi}{p}+\frac{ir\pi}{p}=i\pi$. So
$$s_i(\delta_r)=A_i,s_i(\delta_{r+1})=(-1)^{i+1}A_i.$$
We can also verify
$$A_{2k}=(-1)^{k+1}\frac{1}{2\cos\frac{k\pi}{p}},A_{2k+1}=(-1)^k\frac{1}{2\sin\frac{(2k+1)\pi}{2p}}.$$
In other words,
$$A_i=\begin{cases}
(-1)^{i/2+1}(2\cos(\frac{i\pi}{2p}))^{-1} & i \textup{ even}\\
(-1)^{(i-1)/2}(2\sin(\frac{i\pi}{2p}))^{-1} & i \textup{ odd.}
\end{cases}$$
Also,
$$s_p(\delta_r)=r,s_p(\delta_{r+1})=r+1.$$
Therefore, we have
\begin{align*}
b_{p,d}-1=\frac{\frac{1}{p}\left(\sum_{1 \leq i \leq p-1}(1+\cos\frac{i\pi}{p})((1+(-1)^{i+1})^d)A_i^d\right)}{p^{d-1}-(-1)^{rd}\frac{1}{p}\left(\sum_{1 \leq i \leq p-1}(1+\cos\frac{i\pi}{p})((-1)^{i+1}-1)^d)A_i^d+1\right)}\\
=\frac{\sum_{1 \leq i \leq p-1}(1+\cos\frac{i\pi}{p})((1+(-1)^{i+1})^d)A_i^d}{p^d-(-1)^{rd}\left(\sum_{1 \leq i \leq p-1}(1+\cos\frac{i\pi}{p})((-1)^{i+1}-1)^d)A_i^d+1\right)}\\
=\frac{\sum_{1 \leq i \leq p-1, i \textup{ odd}}(1+\cos\frac{i\pi}{p})(2A_i)^d}{p^d-(-1)^{rd}\left(\sum_{1 \leq i \leq p-1, i \textup{ even}}(1+\cos\frac{i\pi}{p})(-2A_i)^d+1\right)}\\
=\frac{\sum_{1 \leq i \leq p-1, i \textup{ odd}}(1+\cos\frac{i\pi}{p})(-1)^{d(i-1)/2}(\sin\frac{i\pi}{2p})^{-d}}{p^d-(-1)^{rd}\left(\sum_{1 \leq i \leq p-1, i \textup{ even}}(1+\cos\frac{i\pi}{p})(-1)^{di/2}(\cos\frac{i\pi}{2p})^{-d}+1\right)}\\
=\frac{\sum_{1 \leq i \leq p-1, i \textup{ odd}}(1+\cos\frac{i\pi}{p})(-1)^{d(i-1)/2}(\sin\frac{i\pi}{2p})^{-d}}{p^d-(-1)^{rd}\left(\sum_{1 \leq i \leq p-1, i \textup{ odd}}(1+\cos\frac{(p-i)\pi}{p})(-1)^{d(p-i)/2}(\cos\frac{(p-i)\pi}{2p})^{-d}+1\right)}\\
=\frac{\sum_{1 \leq i \leq p-1, i \textup{ odd}}(1+\cos\frac{i\pi}{p})(-1)^{d(i-1)/2}(\sin\frac{i\pi}{2p})^{-d}}{p^d-\sum_{1 \leq i \leq p-1, i \textup{ odd}}(1-\cos\frac{i\pi}{p})(-1)^{d(i-1)/2}(\sin\frac{i\pi}{2p})^{-d}-(-1)^{rd}}\\
=\frac{2\sum_{1 \leq i \leq p-1, i \textup{ odd}}(1-\sin^2\frac{i\pi}{2p})(-1)^{d(i-1)/2}(\sin\frac{i\pi}{2p})^{-d}}{p^d-2\sum_{1 \leq i \leq p-1, i \textup{ odd}}(-1)^{d(i-1)/2}(\sin\frac{i\pi}{2p})^{2-d}-(-1)^{rd}}.
\end{align*}
We set
$$S_{p,d}=\sum_{1 \leq i \leq p-1, i \textup{ odd}}\sin(\frac{i\pi}{2p})^{-d},T_{p,d}=\sum_{1 \leq i \leq p-1, i \textup{ odd}}(-1)^{(i-1)/2}\sin(\frac{i\pi}{2p})^{-d}.$$
Then,
$$b_{p,d}-1=\begin{cases}
\frac{2S_{p,d}-2S_{p,d-2}}{p^d-2S_{p,d-2}-1} & d \textup{ even}\\
\frac{2T_{p,d}-2T_{p,d-2}}{p^d-2T_{p,d-2}-(-1)^{r}} & d \textup{ odd}.
\end{cases}$$
This gives the analytic expression of $b_{p,d}$. We can extend its definition to all odd number $p \geq 3$, which is not necessarily prime.

\begin{remark}
There is also an alternative closed formula for $b_{p,d}-1$ in \cite[Section 4]{CSR26} which uses $\sec$ instead of $\sin$. Through some simple trigonometric substitutions, it can be shown that these two formulas are equivalent. The method in \cite{CSR26} is quite different from this paper in that it directly diagonalizes the multiplication by $\lambda_i$ as a linear map on $\Gamma_1(\mathbb{C})$ from computations in linear algebra.  
\end{remark}
\subsection{A rational expression of $b_{p,d}-1$ in terms of $p$}
The results in previous subsection is a closed formula for $b_{p,d}-1$. However, unlike the results in \cite{PSSY}, it is not direct to see from this result that for fixed $d$, $p \to b_{p,d}-1$ is a rational function. Now we derive another closed formula which is in the form of a rational function of $p$. At the same time, the proof for this formula also yields explicit formula for coefficients of two Ehrhart polynomials $F_d$ and $E_d$ studied in \cite{PSSY}. The proof relies on the Cauchy residue theorem, which relates the sums of residues to a contour integral. This formula can be quickly computed for small $d$.
\begin{theorem}\label{4.4}
Let
$$S_{p,d}=\sum_{1 \leq i \leq p-1, i \textup{ odd}}\sin(\frac{i\pi}{2p})^{-d},T_{p,d}=\sum_{1 \leq i \leq p-1, i \textup{ odd}}(-1)^{(i-1)/2}\sin(\frac{i\pi}{2p})^{-d}.$$
Suppose $d \geq 1$. Then:
\begin{enumerate}
\item When $d$ is even,
$$S_{p,d}=\frac{p}{2}\operatorname{Res}_{z=0}\frac{\tan(pz)}{\sin^d(z)}-\frac{1}{2}.$$
\item When $d$ is odd,
$$T_{p,d}=\frac{p}{2}\operatorname{Res}_{z=0}\frac{\sec(pz)}{\sin^d(z)}-\frac{1}{2}(-1)^r.$$
\end{enumerate}
\end{theorem}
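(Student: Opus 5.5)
The plan is to apply the residue theorem to a $\pi$-periodic meromorphic function on a period strip, using the fact that the total residue over one period vanishes. For $d$ even take $f(z)=\tan(pz)/\sin^d(z)$, and for $d$ odd take $g(z)=\sec(pz)/\sin^d(z)$.

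\textbf{Periodicity.} Both functions have period $\pi$. For $f$ this holds because $\tan(pz)$ has period $\pi/p$ and $\sin^d$ has period $\pi$ when $d$ is even. For $g$ we use that $p$ is odd: $\sec(p(z+\pi))=-\sec(pz)$ and $\sin^d(z+\pi)=-\sin^d(z)$ when $d$ is odd, so the two sign changes cancel.

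\textbf{Vanishing of the total residue.} Integrate over the boundary of the rectangle with vertices $-\epsilon\pm iY$ and $\pi-\epsilon\pm iY$, with $\epsilon>0$ small so that no pole lies on the vertical sides. The vertical sides cancel by periodicity. On the horizontal sides $|\sin z|^{-d}$ decays like $e^{-d|{\rm Im}\, z|}$ with $d\ge1$, while $\tan(pz)\to\pm i$ and $\sec(pz)\to0$ stay bounded. Hence those integrals tend to $0$ as $Y\to\infty$, and the sum of residues of $f$ (resp. $g$) at the poles with real part in $[0,\pi)$ is $0$.

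\textbf{The poles.} In this strip the poles are $z=0$, a zero of $\sin$, and the zeros of $\cos(pz)$, namely $z_i=\frac{i\pi}{2p}$ with $i$ odd, $1\le i\le 2p-1$. At $z_i$ we have $\sin z_i\neq 0$, and $\cos(pz)$ has a simple zero with derivative $-p\sin(pz_i)=-p(-1)^{(i-1)/2}$. Therefore
$$\operatorname{Res}_{z_i}f=-\frac{1}{p\sin^d(z_i)},\qquad \operatorname{Res}_{z_i}g=-\frac{(-1)^{(i-1)/2}}{p\sin^d(z_i)}.$$

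\textbf{Regrouping the sum.} Use the symmetry $\sin\frac{(2p-i)\pi}{2p}=\sin\frac{i\pi}{2p}$ to pair $i$ with $2p-i$ for $i\le p-2$, and treat the middle term $i=p$ separately, where $\sin(\pi/2)=1$.

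For $f$ the pairing gives
$$\sum_{i\ \rm odd}\sin^{-d}(z_i)=2S_{p,d}+1.$$

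For $g$, since $\frac{2p-i-1}{2}=(p-1)-\frac{i-1}{2}$ and $p-1$ is even, the signs $(-1)^{(i-1)/2}$ agree within each pair. The middle term contributes $(-1)^{(p-1)/2}=(-1)^r$, so the signed sum equals $2T_{p,d}+(-1)^r$.

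\textbf{Conclusion.} Setting the total residue to zero gives
$$\operatorname{Res}_{z=0}f=\frac{1}{p}\bigl(2S_{p,d}+1\bigr),\qquad \operatorname{Res}_{z=0}g=\frac{1}{p}\bigl(2T_{p,d}+(-1)^r\bigr).$$
Solving for $S_{p,d}$ and $T_{p,d}$ yields exactly (1) and (2).

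\textbf{Main obstacle.} The only delicate points are the bookkeeping at the ends of the strip and the growth estimate. At the ends, one must count the pole at $0\equiv\pi$ once, and check that no $z_i$ coincides with a multiple of $\pi$; this holds since $i$ is odd and $i<2p$. For the growth estimate, $d\ge1$ is exactly what ensures the horizontal integrals vanish, in particular for $\tan$, which does not decay.
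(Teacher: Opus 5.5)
Your proposal is correct and is essentially the paper's own argument. Both integrate $\tan(pz)/\sin^d z$ (resp.\ $\sec(pz)/\sin^d z$) over the boundary of the period rectangle with vertices $-\epsilon\pm iY$, $\pi-\epsilon\pm iY$, cancel the vertical sides by $\pi$-periodicity, let the horizontal sides vanish as $Y\to\infty$, and pair the residues at $\frac{i\pi}{2p}$ and $\frac{(2p-i)\pi}{2p}$, with the middle pole $\pi/2$ supplying the extra $1$ (resp.\ $(-1)^r$). You also supply details the paper leaves as ``similarly'': the $\pi$-periodicity of $\sec(pz)/\sin^d z$ (which uses that $p$ and $d$ are odd), the sign agreement within each pair (which uses that $p-1$ is even), and the fact that no pole of $\sec(pz)$ or $\tan(pz)$ coincides with a zero of $\sin z$.
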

\begin{proof}
Suppose $d$ is even. We denote
$$F_1(z)=\frac{\tan(pz)}{\sin^d(z)}.$$
Let $0<\epsilon<\frac{\pi}{2p}$ be a real number. Let $C$ be the contour which is the union of four segments: $\pi-\epsilon-iy \to \pi-\epsilon+iy \to -\epsilon+iy \to -\epsilon-iy \to \pi-\epsilon-iy$. It is a boundary of a rectangle. Denote the four segments starting from $\pi-\epsilon-iy$ by $C_1\sim C_4$ respectively. We will consider the limit of the contour integrals
$$\lim_{y\to\infty}\oint_C F_1(z).$$
Here is an illustration of the contour.
\begin{center}
\begin{tikzpicture}[scale=1.5]

\draw[->] (-1.5,0) -- (4,0) node[right] {$\operatorname{Re} z$};
\draw[->] (0,-2.5) -- (0,2.5) node[above] {$\operatorname{Im} z$};

\def\y{2}
\def\eps{0.05}

\coordinate (A) at ({pi-\eps},-\y);   
\coordinate (B) at ({pi-\eps},\y);    
\coordinate (C) at (-\eps,\y);       
\coordinate (D) at (-\eps,-\y);      

\draw[->,thick] (A) -- (B);
\draw[->,thick] (B) -- (C);
\draw[->,thick] (C) -- (D);
\draw[->,thick] (D) -- (A);

\node[above right] at (B) {$\pi-\varepsilon + iy$};
\node[above left] at (C) {$-\varepsilon + iy$};
\node[below left] at (D) {$-\varepsilon - iy$};
\node[below right] at (A) {$\pi-\varepsilon - iy$};

\filldraw (0,0) circle (1.5pt) node[above right] {$0$};
\filldraw ({pi},0) circle (1.5pt) node[below] {$\pi$};
\filldraw ({pi/2},0) circle (1.5pt) node[above right] {$\frac{\pi}{2}$};

\draw ({pi/22},0) circle (1.5pt) node[below] {$\frac{\pi}{2p}$};
\draw ({3*pi/22},0) circle (1.5pt) node[below] {$\frac{3\pi}{2p}$};
\draw ({9*pi/22},0) circle (1.5pt) node[above] {$\frac{(p-2)\pi}{2p}$};
\draw ({13*pi/22},0) circle (1.5pt) node[below] {$\frac{(p+2)\pi}{2p}$};
\draw ({21*pi/22},0) circle (1.5pt) node[below left] {\tiny$\frac{(2p-1)\pi}{2p}$};

\draw[dashed] (-\eps,0) -- ({pi-\eps},0);

\node[align=left] at (2,-3) {\footnotesize Illustration of the contour. Sums of residues of $F_1$ ($F_2$ respectively) at the \\ \footnotesize hollow circle left to $\pi/2$ and right to $\pi/2$ are both $-\frac{1}{p}S_{p,d}$ ($-\frac{1}{p}T_{p,d}$ respectively). \footnotesize ($\varepsilon \ll 1$, $y \gg 1$)};

\end{tikzpicture}
\end{center}

We see when $d$ is even, $F_1(z)$ is periodic of period $\pi$. Thus the integrals along $C_1$ and $C_3$ cancel. When $y \to \pm\infty$ and $x$ is bounded, $F_1(x+iy) \to 0$. So the integrals along $C_2,C_4$ go to $0$, therefore,
$$\lim_{y\to\infty}\oint_C F_1(z)=0.$$
Now we check the poles and residues of $F_1$: the poles inside $C$ are exactly $0$ and $\frac{i\pi}{2p}$ where $1 \leq i \leq 2p-1$ is odd. We see
$$\operatorname{Res}_{z=\frac{i\pi}{2p}}F_1(z)=-\frac{1}{p\sin^d(\frac{i\pi}{2p})}.$$
In particular, the residue at $\frac{\pi}{2}$ is $-\frac{1}{p}$. Also, $\sin^d(\frac{i\pi}{2p})=\sin^d(\frac{(2p-i)\pi}{2p})$. Now the residue theorem gives
$$\operatorname{Res}_{z=0}F_1(z)-\frac{1}{p}(2S_{p,d}+1)=0.$$
Therefore,
$$S_{p,d}=\frac{p}{2}\operatorname{Res}_{z=0}\frac{\tan(pz)}{\sin^d(z)}-\frac{1}{2}.$$
Now suppose $d$ is odd. To compute $T_{p,d}$, we consider
$$F_2(z)=\frac{\sec(pz)}{\sin^d(z)}.$$
Similarly we can prove
$$\lim_{y\to\infty}\oint_C F_2(z)=0,\operatorname{Res}_{z=\frac{i\pi}{2p}}F_2(z)=-\frac{(-1)^{(i-1)/2}}{p\sin^d(\frac{i\pi}{2p})}$$
so the residue theorem gives
$$T_{p,d}=\frac{p}{2}\operatorname{Res}_{z=0}\frac{\sec(pz)}{\sin^d(z)}-\frac{1}{2}(-1)^r.$$
\end{proof}
\begin{remark}
When $d=0$, we get
$$S_{p,0}=\frac{p-1}{2}\neq -\frac{1}{2}$$
so the above formula does not apply. The reason for this is that we don't have 
$$F_1(x+iy)\to 0$$
in this case, so the contour integeral does not converge to $0$ anymore.
\end{remark}
\begin{definition}
Define
$$\rho_d(z)=\sin(z)^{-d}=z^{-d}(z/\sin z)^d.$$
It is a meromorphic function on $\mathbb{C}$ whose Laurent power series expansion converges for $z \in (0,\pi/2)$. Let $$z^d\rho_d(z)=\rho_{d,0}+\rho_{d,1}z+\rho_{d,2}z^2+\ldots$$
be the Taylor expansion of $z^d\rho_d(z)$ near $0$.
\end{definition}
Since $z^d\rho_d(z)$ is an even function, $\rho_{d,i}=0$ when $i$ is odd. Recall that the Taylor expansion of $\tan$ and $\sec$ are given by:
$$
\tan z = \sum_{n=1}^\infty \frac{(-1)^{n-1} 2^{2n} (2^{2n} - 1) B_{2n}}{(2n)!} z^{2n-1}
$$
$$
\sec z = \sum_{n=0}^\infty \frac{(-1)^n E_{2n}}{(2n)!} z^{2n}
$$
where $B_n$ is the Bernoulli number and $E_n$ is the Euler number defined by $\frac{x}{e^x - 1} = \sum_{n=0}^{\infty} B_n \frac{x^n}{n!}$ and $\frac{1}{\cosh x} = \sum_{n=0}^{\infty} E_n \frac{x^n}{n!}$.

\begin{theorem}\label{4.7}
Let $d \geq 1$ be an integer. We have:
\begin{enumerate}
\item When $d$ is even, $$S_{p,d}=\frac{1}{2}\sum_{i=1}^{d/2}\frac{(-1)^{i-1} 2^{2i} (2^{2i} - 1) B_{2i}\rho_{d,d-2i}p^{2i}}{(2i)!}-\frac{1}{2}.$$
\item When $d$ is odd, $$T_{p,d}=\frac{1}{2}\sum_{i=0}^{(d-1)/2}\frac{(-1)^i E_{2i}\rho_{d,d-2i-1}p^{2i+1}}{(2i)!}-\frac{1}{2}(-1)^r.$$
\item When $d \geq 4$ is even, 
$$b_{p,d}-1=\frac{\sum_{i=1}^{d/2}\frac{(-1)^{i-1} 2^{2i} (2^{2i} - 1) B_{2i}\rho_{d,d-2i}p^{2i}}{(2i)!}-\sum_{i=1}^{(d-2)/2}\frac{(-1)^{i-1} 2^{2i} (2^{2i} - 1) B_{2i}\rho_{d-2,d-2i-2}p^{2i}}{(2i)!}}{p^d-\sum_{i=1}^{(d-2)/2}\frac{(-1)^{i-1} 2^{2i} (2^{2i} - 1) B_{2i}\rho_{d-2,d-2i-2}p^{2i}}{(2i)!}}$$
\item When $d \geq 3$ is odd,
$$b_{p,d}-1=\frac{\sum_{i=0}^{(d-1)/2}\frac{(-1)^i E_{2i}\rho_{d,d-2i-1}p^{2i+1}}{(2i)!}-\sum_{i=0}^{(d-3)/2}\frac{(-1)^i E_{2i}\rho_{d-2,d-2i-3}p^{2i+1}}{(2i)!}}{p^d-\sum_{i=0}^{(d-3)/2}\frac{(-1)^i E_{2i}\rho_{d-2,d-2i-3}p^{2i+1}}{(2i)!}}$$
\end{enumerate}
\end{theorem}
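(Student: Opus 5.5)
We derive (1) and (2) directly from \Cref{4.4} by computing the residues at $z=0$ as Laurent coefficients, and then obtain (3) and (4) by substituting into the analytic expression for $b_{p,d}-1$ from the previous subsection.

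\textbf{Part (1).} For $d$ even, write
$$\frac{\tan(pz)}{\sin^d(z)}=\tan(pz)\,z^{-d}\,\bigl(z^d\rho_d(z)\bigr)=z^{-d}\Bigl(\sum_{n\ge 1}\frac{(-1)^{n-1}2^{2n}(2^{2n}-1)B_{2n}}{(2n)!}p^{2n-1}z^{2n-1}\Bigr)\Bigl(\sum_{j\ge 0}\rho_{d,j}z^j\Bigr).$$
The residue is the coefficient of $z^{d-1}$ in the product of the two power series. This picks out the pairs with $(2n-1)+j=d-1$, that is, $j=d-2n$ with $1\le n\le d/2$. So
$$\operatorname{Res}_{z=0}\frac{\tan(pz)}{\sin^d(z)}=\sum_{i=1}^{d/2}\frac{(-1)^{i-1}2^{2i}(2^{2i}-1)B_{2i}\rho_{d,d-2i}}{(2i)!}p^{2i-1}.$$
Multiplying by $p/2$ and subtracting $1/2$, as \Cref{4.4} prescribes, gives (1).

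\textbf{Part (2).} For $d$ odd, the same computation with $\sec(pz)=\sum_n \frac{(-1)^nE_{2n}}{(2n)!}p^{2n}z^{2n}$ needs $2i+j=d-1$, so $j=d-2i-1$ with $0\le i\le (d-1)/2$. Hence the residue is $\sum_i \frac{(-1)^iE_{2i}\rho_{d,d-2i-1}}{(2i)!}p^{2i}$. Multiplying by $p/2$ and subtracting $\frac12(-1)^r$ gives (2).

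Both expansions converge near $0$, since $\tan(pz)$ and $\sec(pz)$ are holomorphic for $|z|<\pi/(2p)$ and $z^d\rho_d(z)$ is holomorphic for $|z|<\pi$. So the coefficient extraction is legitimate, and the only care needed is bookkeeping of the index ranges.

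\textbf{Parts (3) and (4).} We substitute (1) and (2) into
$$b_{p,d}-1=\frac{2S_{p,d}-2S_{p,d-2}}{p^d-2S_{p,d-2}-1}\quad (d\text{ even}),\qquad b_{p,d}-1=\frac{2T_{p,d}-2T_{p,d-2}}{p^d-2T_{p,d-2}-(-1)^r}\quad (d\text{ odd}).$$
The hypothesis $d\ge 4$ (respectively $d\ge 3$) is exactly what is needed so that $d-2\ge 1$, which makes (1) (respectively (2)) applicable to $S_{p,d-2}$ (respectively $T_{p,d-2}$). The remark after \Cref{4.4} shows this genuinely fails at $d-2=0$.

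For $d$ even, the constants $-\tfrac12$ cancel in the numerator $2S_{p,d}-2S_{p,d-2}$. In the denominator, $-2S_{p,d-2}-1=-(\text{sum for }d-2)+1-1$, so the $-1$ is absorbed as well. This yields exactly the displayed expression in (3), with the sum for $d-2$ running over $1\le i\le (d-2)/2$.

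For $d$ odd the same happens. The terms $-(-1)^r$ cancel in the numerator, and in the denominator $+(-1)^r-(-1)^r=0$. The upper index for $d-2$ is $(d-3)/2$, giving (4).

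\textbf{Main obstacle.} There is no real obstacle here. The point requiring attention is checking that the constants $\tfrac12$ and $\tfrac12(-1)^r$ cancel exactly in both numerator and denominator, and that the summation ranges match the powers $p^{2i}$ and $p^{2i+1}$ displayed in the statement.
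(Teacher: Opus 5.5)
Your proposal is correct and follows the paper's own proof. The paper likewise reads off the residues at $z=0$ as Laurent coefficients of $\tan(pz)\sin^{-d}(z)$ and $\sec(pz)\sin^{-d}(z)$, using the Taylor series of $z^d/\sin^d(z)$, and then substitutes into the trigonometric formula for $b_{p,d}-1$. Your explicit bookkeeping of the index ranges, the cancellation of the constants $\tfrac12$ and $\tfrac12(-1)^r$, and the need for $d-2\geq 1$ supplies exactly what the paper calls ``straightforward computation.''
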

\begin{proof}
We can directly extract the residue as the coefficient of $z^{-1}$ in the Taylor expansion. We get: when $d$ is even,
$$\operatorname{Res}_{z=0}\frac{\tan(pz)}{\sin^d(z)}=\sum_{i=1}^{d/2}\frac{(-1)^{i-1} 2^{2i} (2^{2i} - 1) B_{2i}\rho_{d,d-2i}p^{2i-1}}{(2i)!}$$
and when $d$ is odd,
$$\operatorname{Res}_{z=0}\frac{\sec(pz)}{\sin^d(z)}=\sum_{i=0}^{(d-1)/2}\frac{(-1)^i E_{2i}\rho_{d,d-2i-1}p^{2i}}{(2i)!}$$
and the rest is straightforward computation.
\end{proof}
We recall that the $d$-dimensional Fibonacci polytope $F_d$ is the subset of $[0,1]^d$ satisfying $x_i+x_{i+1}\leq 1,1 \leq i \leq d-1$ and $d$-dimensional extended Fibonacci polytope $E_d$ is the subset of $[-1,1]^d$ satisfying $|x_i|+|x_{i+1}|\leq 1,1 \leq i \leq d-1$.  
\begin{corollary}\label{4.8}
For $d \geq 1$, let $F_d(n)$ and $E_d(n)$ be the Ehrhart polynomials of the $d$-dimensional Fibonacci and extended Fibonacci polytopes. Then we have:
\begin{align*}
F_{d}(\frac{p-3}{2})=\begin{cases}
\frac{1}{2^{d}p}(\sum_{i=1}^{(d+1)/2}\frac{(-1)^{i-1} 2^{2i} (2^{2i} - 1) B_{2i}\rho_{d+1,d-2i+1}p^{2i}}{(2i)!}\\-\sum_{i=1}^{(d-1)/2}\frac{(-1)^{i-1} 2^{2i} (2^{2i} - 1) B_{2i}\rho_{d-1,d-2i-1}p^{2i}}{(2i)!}) & d \textup{ odd}\\
\\
\frac{1}{2^{d}p}(\sum_{i=0}^{d/2}\frac{(-1)^i E_{2i}\rho_{d+1,d-2i}p^{2i+1}}{(2i)!}\\-\sum_{i=0}^{(d-2)/2}\frac{(-1)^i E_{2i}\rho_{d-1,d-2i-2}p^{2i+1}}{(2i)!}) & d \textup{ even}.
\end{cases}
\end{align*}
for $d \geq 2$ and
$$E_{d}(\frac{p-1}{2})=\begin{cases}
\frac{1}{p}(\sum_{i=1}^{(d+1)/2}\frac{(-1)^{i-1} 2^{2i} (2^{2i} - 1) B_{2i}\rho_{d+1,d-2i+1}p^{2i}}{(2i)!}) & d \textup{ odd}\\
\frac{1}{p}(\sum_{i=0}^{d/2}\frac{(-1)^i E_{2i}\rho_{d+1,d-2i}p^{2i+1}}{(2i)!}) & d \textup{ even}.
\end{cases}
$$
for $d \geq 1$.
\end{corollary}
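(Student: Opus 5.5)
We will not compute the Ehrhart polynomials from scratch. Instead we compare two expressions for the same invariant: the Ehrhart form of $e_{HK}(Q_{p,d})$ in \cite{PSSY}, and the analytic expression of $b_{p,d}$ obtained above via the Gelfand transform, together with \Cref{4.7}. From \cite{PSSY} we will use that the number $\mu$ of generators of the relevant powers in $\Gamma_1$ is counted by lattice points: $\mu((\delta_r+\delta_{r+1})^{d+1})$ and $\mu((\delta_{r+1}-\delta_r)^{d+1})$ (up to sign) are, respectively, $2^{d}F_d(\frac{p-3}{2})$-type and $E_d(\frac{p-1}{2})$-type counts. We must fix the exact normalization by matching with the definitions of $F_d$ and $E_d$ there. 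The index shift $d\mapsto d+1$ in the statement reflects that a $(d+1)$-fold product corresponds to a $d$-dimensional polytope.

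The computational heart is to express these $\mu$-values through our trigonometric sums. By the lemma computing $\mu(w^d)$ via the species $s_i$, together with $s_i(\delta_r)=A_i$, $s_i(\delta_{r+1})=(-1)^{i+1}A_i$ and $s_p(\delta_r)=r$, $s_p(\delta_{r+1})=r+1$, we get the following. For $\delta_r+\delta_{r+1}$ only odd $i$ survive, and for $\delta_{r+1}-\delta_r$ only even $i$ survive. Repeating the manipulation of the previous subsection (reindexing $i\mapsto p-i$ in the even case and writing $1\pm\cos\frac{i\pi}{p}=2-2\sin^2$ or $2\sin^2$) gives
$$p\,\mu((\delta_r+\delta_{r+1})^{n})=p^{n}+2\Sigma_{n}-2\Sigma_{n-2},\qquad p\,(-1)^{rn}\mu((\delta_{r+1}-\delta_r)^{n})=2\Sigma_{n-2}+(-1)^{rn}.$$
Here $\Sigma_n=S_{p,n}$ for $n$ even and $\Sigma_n=T_{p,n}$ for $n$ odd. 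Substituting \Cref{4.7}(1),(2), the constants $-\frac12$ and $-\frac12(-1)^r$ cancel the stray $\pm1$ terms exactly. This cancellation is the reason $S$ and $T+\frac12(-1)^r$ are polynomials. What remains is precisely the numerator or denominator polynomial appearing in \Cref{4.7}(3),(4), with $d$ replaced by $d+1$.

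Finally we subtract the $p^{n}$ term (for $F_d$, whose count corresponds to the "excess" $\mu-p^{n-1}$) and divide by $2^d p$ or $p$. This gives the two displayed formulas, with $d$ odd corresponding to $n=d+1$ even (Bernoulli, $\tan$) and $d$ even to $n$ odd (Euler, $\sec$). Both sides are polynomials in $p$ agreeing at infinitely many odd primes $p$, so the identity holds as polynomials. The boundary restrictions $d\ge2$ for $F_d$ and $d\ge1$ for $E_d$ come from needing $n-2\ge1$ in \Cref{4.4}, since that theorem fails for $S_{p,0}$.

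The main obstacle will be the bookkeeping that identifies the \cite{PSSY} lattice-point counts with the correct $\mu$-values, including signs $(-1)^{rd}$, powers of $2$, and the shift of argument $\frac{p-3}{2}$ versus $\frac{p-1}{2}$. The first thing to try is to recheck small cases, e.g. $d=1,2$ with $p=3,5$, directly against $F_1(n)=n+1$ and $E_1(n)=2n+1$ to pin down the normalization.
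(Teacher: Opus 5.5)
Your route is the paper's: take the lattice-point identities from the proof of \cite[Theorem 3.9]{PSSY}, express the same $\mu$-values through $S_{p,n},T_{p,n}$ as in subsection 4.2, substitute \Cref{4.4}/\Cref{4.7}, and shift indices. Your identities $p\,\mu((\delta_r+\delta_{r+1})^n)=p^n+2\Sigma_n-2\Sigma_{n-2}$ and $p\,(-1)^{rn}\mu((\delta_{r+1}-\delta_r)^n)=2\Sigma_{n-2}+(-1)^{rn}$ are correct. The $F_d$ half goes through exactly as you describe with $n=d+1$, using $\mu((\delta_r+\delta_{r+1})^{d+1})-p^{d}=2^{d}F_d(\frac{p-3}{2})$, and this also explains the restriction $d\ge 2$.

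The gap is in the $E_d$ half. That is precisely the bookkeeping you deferred, and the shift you committed to is wrong. The identity the paper uses is $(-1)^{rn}\mu((\delta_{r+1}-\delta_r)^n)=\mu(\lambda_r^n)=E_{n-3}(\frac{p-1}{2})$. So $E_d$ corresponds to the $(d+3)$-rd power, not the $(d+1)$-st.

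With your $n=d+1$, the identity $p(-1)^{rn}\mu=2\Sigma_{n-2}+(-1)^{rn}$ produces the denominator polynomial of \Cref{4.7} at $d+1$. That is a sum in $\rho_{d-1,\cdot}$ with upper limit $(d-1)/2$ or $(d-2)/2$, not the claimed expression in $\rho_{d+1,\cdot}$. Your own proposed check exposes this: $\lambda_r^2=\sum_{k=0}^{p-1}\lambda_k$ gives $\mu(\lambda_r^2)=1$, whereas $E_1(\frac{p-1}{2})=p=\mu(\lambda_r^4)$.

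With $n=d+3$ one needs $\Sigma_{n-2}=\Sigma_{d+1}$. Then \Cref{4.7}(1),(2) at $d+1$ give exactly the stated formula; this is the paper's step of replacing $d$ by $d+3$ in $E_{d-3}$. Your explanation of the range for $E_d$ is also inconsistent. With your shift, "$n-2\ge 1$" would force $d\ge 2$, contradicting the statement. With the correct shift, \Cref{4.4} is only applied to $\Sigma_{d+1}$ and imposes merely $d\ge 0$.
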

\begin{proof}
From the proof of \cite[Theorem 3.9]{PSSY} we get
$$\mu((\delta_r+\delta_{r+1})^d)-p^{d-1}=2^{d-1}F_{d-1}(\frac{p-3}{2})$$
and
$$p^{d-1}-(-1)^{rd}\mu((\delta_{r+1}-\delta_r)^d)=p^{d-1}-E_{d-3}(\frac{p-1}{2})$$
The computation in subsection 4.2 gives
$$\mu((\delta_r+\delta_{r+1})^d)-p^{d-1}=\begin{cases}
\frac{1}{p}(2S_{p,d}-2S_{p,d-2}) & d \textup{ even}\\
\frac{1}{p}(2T_{p,d}-2T_{p,d-2}) & d \textup{ odd}.
\end{cases}$$
and
$$p^{d-1}-(-1)^{rd}\mu((\delta_{r+1}-\delta_r)^d)=\begin{cases}
\frac{1}{p}(p^d-2S_{p,d-2}-1) & d \textup{ even}\\
\frac{1}{p}(p^d-2T_{p,d-2}-(-1)^r) & d \textup{ odd}.
\end{cases}$$
and for $d \geq 1$, $S_{p,d},T_{p,d}$ are computed in \Cref{4.4}. Combining all these yields
\begin{align*}
F_{d-1}(\frac{p-3}{2})=\begin{cases}
\frac{1}{2^{d-1}p}(\sum_{i=1}^{d/2}\frac{(-1)^{i-1} 2^{2i} (2^{2i} - 1) B_{2i}\rho_{d,d-2i}p^{2i}}{(2i)!}\\-\sum_{i=1}^{(d-2)/2}\frac{(-1)^{i-1} 2^{2i} (2^{2i} - 1) B_{2i}\rho_{d-2,d-2i-2}p^{2i}}{(2i)!}) & d \textup{ even}\\
\\
\frac{1}{2^{d-1}p}(\sum_{i=0}^{(d-1)/2}\frac{(-1)^i E_{2i}\rho_{d,d-2i-1}p^{2i+1}}{(2i)!}\\-\sum_{i=0}^{(d-3)/2}\frac{(-1)^i E_{2i}\rho_{d-2,d-2i-3}p^{2i+1}}{(2i)!}) & d \textup{ odd}.
\end{cases}
\end{align*}
for $d-2 \geq 1$ and
$$E_{d-3}(\frac{p-1}{2})=\begin{cases}
\frac{1}{p}(\sum_{i=1}^{(d-2)/2}\frac{(-1)^{i-1} 2^{2i} (2^{2i} - 1) B_{2i}\rho_{d-2,d-2i-2}p^{2i}}{(2i)!}) & d \textup{ even}\\
\frac{1}{p}(\sum_{i=0}^{(d-3)/2}\frac{(-1)^i E_{2i}\rho_{d-2,d-2i-3}p^{2i+1}}{(2i)!}) & d \textup{ odd}.
\end{cases}
$$
for $d-2 \geq 1$. Replacing $d$ by $d+1$ in the expression of $F_{d-1}$ and replacing $d$ by $d+3$ in the expression of $E_{d-3}$, we get the result.
\end{proof}
\begin{remark}
Another expression of $F_d(n)$ is shown in \cite[Theorem 1.5]{KAH}.    
\end{remark}
\section{Asymptotic behavior and monotonicity of $b_{p,d}-1$}
In this section we will confirm Yoshida's conjecture:
\begin{theorem}\label{5.1}
For any $d$,
$$p \mapsto b_{p,d}-1$$
is decreasing on the set of odd numbers at least $3$. In particular, it is decreasing on the set of all odd prime numbers. 
\end{theorem}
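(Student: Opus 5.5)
The plan follows the roadmap of the introduction. We work throughout with the analytic formula of Section 4:
$$b_{p,d}-1=\frac{2S_{p,d}-2S_{p,d-2}}{p^d-2S_{p,d-2}-1}\ (d\textup{ even}),\qquad b_{p,d}-1=\frac{2T_{p,d}-2T_{p,d-2}}{p^d-2T_{p,d-2}-(-1)^r}\ (d\textup{ odd}),$$
which now makes sense for every odd $p\geq 3$. The cases split as follows.

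\textbf{Small $d$.} For $d\leq 15$ the statement is \cite{PSSY}. That source works with primes; if it does not cover odd composites, I would instead check these finitely many rational functions of $p$ (given by \Cref{4.7}) directly.

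\textbf{Small $p$.} For $p=3$ versus $p=5$, the sums have at most two or three terms. Here I would write $b_{3,d}-1$ and $b_{5,d}-1$ explicitly as combinations of $\sin(\pi/6)^{-d}=2^d$, $\sin(\pi/10)^{-d}$ and $\sin(3\pi/10)^{-d}$, and compare them for all $d$ by elementary inequalities between exponentials.

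\textbf{Main case: $p\geq 5$, $d\geq 16$.} The idea is that the sums are dominated by their first term. Write $x_i=\sin(i\pi/2p)^{-1}$ for $i$ odd, so that $x_1\approx 2p/\pi$ is largest and $x_3/x_1\approx 1/3$. Since $d\geq 16$, every other term of $S_{p,d}$ or $T_{p,d}$ is at most about $3^{-d}$ relative to the first. Then
$$2S_{p,d}-2S_{p,d-2}=2\sum_i x_i^{d-2}(x_i^2-1)=2x_1^{d}\cos^2(\tfrac{\pi}{2p})\,(1+\eta),$$
with an explicit error bound of the form $|\eta|\leq C\,3^{-(d-2)}$. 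The denominator is $p^d\bigl(1-2x_1^{d-2}p^{-d}(1+\eta')\bigr)$. Note that $x_1/p\to 2/\pi<1$ and $x_1^{d-2}/p^d$ is tiny, so the denominator is $p^d(1+O(p^{-2}(2/\pi)^{d-2}))$. Hence
$$b_{p,d}-1=2\cos^2(\tfrac{\pi}{2p})\Bigl(\frac{1}{p\sin(\pi/2p)}\Bigr)^{d}(1+\epsilon_{p,d}).$$
The main factor $f(p)=\bigl(p\sin(\pi/2p)\bigr)^{-1}$ is strictly decreasing in $p$, since $t\mapsto \sin t/t$ is decreasing on $(0,\pi/2]$. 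Its $d$-th power therefore drops by a ratio $(f(p+2)/f(p))^d$. The factor $\cos^2(\pi/2p)$ increases, but only by a factor $1+O(p^{-2})$.

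The step I expect to be the main obstacle is the quantitative comparison for $p$ versus $p+2$. One must show that
$$\Bigl(\frac{p\sin(\pi/2p)}{(p+2)\sin(\pi/2(p+2))}\Bigr)^{d}\cdot\frac{\cos^2(\pi/2(p+2))}{\cos^2(\pi/2p)}\cdot\frac{1+\epsilon_{p+2,d}}{1+\epsilon_{p,d}}<1$$
uniformly in $p\geq5$, $d\geq16$. Using $\sin t/t=1-t^2/6+O(t^4)$, the first ratio is about $1-\frac{\pi^2}{12}\bigl(p^{-2}-(p+2)^{-2}\bigr)$ per unit of $d$, that is, a decrease of order $d\pi^2/(3p^3)$. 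The cosine ratio gains about $\frac{\pi^2}{4}\bigl(p^{-2}-(p+2)^{-2}\bigr)\approx \pi^2/p^3$. So $d\geq16$ (in fact $d>3$ asymptotically) beats the cosine factor with room to spare. The $\epsilon$'s must be bounded by something like $3^{-d}$, and compared with the gap, which is of order $d/p^3$. This is worrying when $p$ is large relative to $3^d$, because then the gap $d/p^3$ shrinks while the $\epsilon$ bound does not.

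To handle this, I would not bound $\epsilon_{p,d}$ absolutely. Instead I would show that each correction term is itself monotone in the right direction: each ratio $x_i/x_1$ varies slowly in $p$. Concretely, I would prove that each normalized term $(x_i/x_1)^d$, with its cosine weight, decreases or changes by at most $O(3^{-d}d/p^3)$. That difference estimate then fits under the main gap.

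As a fallback for huge $p$, I would use the result of \cite{TriWYinequ} that monotonicity holds for $p\gg0$, or equivalently the rational expressions of \Cref{4.7}. That requires an explicit threshold, which may itself be the hardest bookkeeping.
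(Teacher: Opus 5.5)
\textbf{Verdict: the large-$p$ regime, which you flag yourself as the main obstacle, is not proved; the rest matches the paper.}

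\textbf{What matches the paper.} Your reductions and main-case argument are the paper's. For $d\le 15$ you use the rational functions of \cite{PSSY}; \Cref{4.7} shows these hold for every odd $p\ge 3$, so composites are covered. For $p=3$ versus $p=5$ you compare explicit exponentials. For $d\ge 16$ you use the first-term approximation $b_{p,d}-1\approx 2\cos^2(\frac{\pi}{2p})(p\sin\frac{\pi}{2p})^{-d}$, which is the paper's $2F(\alpha_p)$.

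Two small slips, both harmless for $d\ge 16$:
\begin{itemize}
\item With $t=\pi/2p$, $\sin t/t=1-t^2/6+\cdots$ gives a per-unit-of-$d$ drop of $\frac{\pi^2}{24}(p^{-2}-(p+2)^{-2})$, not $\frac{\pi^2}{12}(\cdots)$. So the main factor beats the cosine factor only for $d>6$, matching the paper's $\frac d6c^2-1$.
\item Uniformly in $p\ge 5$, the base of the error is $x_1/x_3\ge\frac{3+\sqrt5}{2}$, not $3$.
\end{itemize}

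\textbf{The gap.} The relative error of the first-term approximation tends to $\lambda(d)-1$ (or $\beta(d)-1$) as $p\to\infty$; it does not decay in $p$. The gap, however, is of order $d/p^3$. So this argument only covers $p$ at most exponential in $d$, and the paper uses it exactly for $p\le 2^{d/4}$.

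For $p\ge 2^{d/4}$ the paper does separate work that your proposal does not supply:
\begin{itemize}
\item It expands each $\sin(\frac{i\pi}{2p})^{-d}$ in the Taylor series of $z^d/\sin^d z$. The coefficients are nonnegative by Euler's product and satisfy $\rho_{d,2i}\le (d/4)^i$.
\item This gives $S_{p,d}=\sum_{i\le d-2}\rho_{d,i}(\frac{\pi}{2})^{i-d}\lambda(d-i)p^{d-i}+R_2+R_3$, with $0\le R_2\le p^2$ and $0\le -R_3\le p(d/4)^{d/2}$. The same holds for $T_{p,d}$ with $\beta$ in place of $\lambda$.
\item It then shows the normalized numerator $G(p,d)$ decreases and the denominator $H(p,d)$ increases. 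The gap comes from the $p^{-2}$ coefficient $\lambda(d-2)(\frac d6-1+\cdots)$.
\end{itemize}
Your fallback to \cite{TriWYinequ} does not help, because its threshold is not effective, let alone uniform in $d$.

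\textbf{Your alternative route.} Difference estimates for the correction terms are a genuinely different and viable route. If carried out, they would handle all $p\ge5$ at once, without the paper's two-regime split. But as written the estimate has the wrong shape. A bound of $O(3^{-d}d/p^3)$ per term, summed over about $p/2$ terms, gives $O(3^{-d}d/p^2)$, which loses to the gap once $p\gtrsim 3^d$. You need a bound that is summable in $i$ uniformly in $p$.

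One way to get it:
\begin{itemize}
\item Normalize by $p^d$ rather than $x_1^d$: $p^{-d}(2S_{p,d}-2S_{p,d-2})=2(\frac{2}{\pi})^d\sum_i i^{-d}h_d(\frac{i\pi}{2p})$, where $h_d(t)=(t/\sin t)^d\cos^2t$.
\item From $\frac1t-\cot t\le\frac{4}{\pi^2}t$ on $(0,\frac{\pi}{2}]$ you get $|h_d'(t)|\le(\frac{\pi}{2})^d(\frac{4d}{\pi^2}+2)t$.
\item Hence between $p$ and $p+2$ the $i$-th summand $i^{-d}h_d(\frac{i\pi}{2p})$ changes by at most $(\frac{\pi}{2})^d(\frac{4d}{\pi^2}+2)\frac{\pi^2 i^{2-d}}{2p^2(p+2)}$.
\item Meanwhile $(\log h_d)'\ge t(\frac d3-2\frac{\tan t}{t})$, so the $i=1$ term drops by an amount of order $d/(p^2(p+2))$. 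Summed over $i\ge 3$, the other changes are smaller by a factor of order $d(\pi/6)^d$.
\end{itemize}
You must still handle, with explicit constants:
\begin{itemize}
\item the new index $i=p$ that appears at $p+2$;
\item the alternating signs in $T_{p,d}$, which mean ``monotone in the right direction'' is not enough and only absolute bounds work;
\item the flip of $(-1)^r$ and the variation of the denominator.
\end{itemize}
Until this is done, the large-$p$ regime remains unproved.
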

To prove \Cref{5.1}, we need to analyze the asymptotic behavior of $S_{p,d}$ and $T_{p,d}$. We first show that if we replace $S_{p,d}$ or $T_{p,d}$ with an approximation, then the corresponding expression is decreasing in $p$, and then we show the exact value is sufficiently close to the approximation, so it does not affect the decreasing property. The form of the approximation will vary according to whether $p$ is large or $d$ is large.

We first check some cases where $p$ or $d$ is small.
\subsection{Small values of $p$ and $d$}
\begin{example}
We directly compute $b_{p,d}-1$ for some small $d$.  We see 
\begin{align*}
S_{p,0}=\frac{p-1}{2}, T_{p,1}=\frac{p-(-1)^r}{2},S_{p,2}=\frac{p^2-1}{2},\\
T_{p,3}=\frac{p^3+p-2(-1)^r}{4},S_{p,4}=\frac{p^4+2p^2-3}{6},\\
T_{p,5}=\frac{5p^5+10p^3+9p-24(-1)^r}{48}.   
\end{align*}
They can be computed from the Laurent expansion of functions $F_1,F_2$ in subsection 4.3. This gives
$$b_{p,d}-1=1,\frac{1}{2},\frac{1}{3},\frac{5p^2+3}{24p^2+12}$$
for $d=2,3,4,5$ respectively. This recovers the result in \cite{WYconj05}. In these cases $p \mapsto b_{p,d}-1$ is decreasing. For $d=1$ $b_{p,d}=2$ by definition. 
\end{example}
\begin{example}
We see
$$b_{3,d}-1=2\frac{2^d-2^{d-2}}{3^d-2\cdot 2^{d-2}-(-1)^d}=\frac{3\cdot 2^{d-1}}{3^d-2^{d-1}-(-1)^d}$$
$$b_{5,d}-1=2\frac{\sin\frac{\pi}{10}^{(-d)}+(-\sin\frac{3\pi}{10})^{(-d)}-\sin^{2-d}\frac{\pi}{10}-(-\sin\frac{3\pi}{10})^{2-d}}{5^d-2\sin^{2-d}\frac{\pi}{10}-2(-\sin\frac{3\pi}{10})^{2-d}-1}.$$
Here
$$\sin\frac{\pi}{10}=\frac{\sqrt{5}-1}{4},\sin\frac{3\pi}{10}=\frac{\sqrt{5}+1}{4}.$$
Now we prove $b_{3,d}-1>b_{5,d}-1$. We set
$$p=(\sin\frac{\pi}{10})^{-1}=\sqrt{5}+1,q=(\sin\frac{3\pi}{10})^{-1}=\sqrt{5}-1.$$
We see
$$b_{3,d}-1\geq \frac{3}{2}(\frac{2}{3})^{d}$$
and
$$b_{5,d}-1=\frac{2(p^d+(-1)^dq^d-p^{d-2}-(-1)^dq^{d-2})}{5^d-2p^{d-2}-2(-1)^dq^{d-2}-1}.$$
We have $\frac{2p^{d-2}+2q^{d-2}+1}{5^d}\leq \frac{2p^4+2q^4+1}{5^6}<0.015$ when $d \geq 6$. Then
$$b_{5,d}-1\leq \frac{2(p^2-1)p^{d-2}+2(q^2-1)q^{d-2}}{5^d(1-0.015)}.$$
We see when $d=7$,
$$\frac{2(p^2-1)p^{d-2}+2(q^2-1)q^{d-2}}{5^d(1-0.015)}=0.0874...<0.0877...=(\frac{3}{2})(\frac{2}{3})^d$$
and $\frac{p}{5},\frac{q}{5}<\frac{2}{3}$, so the above inequality also holds for $d \geq 7$, and $b_{3,d}-1>b_{5,d}-1$. The $d=6$ case can be computed directly:
$$b_{3,6}-1=0.137...>0.135...=b_{5,6}-1.$$
Also we have computed $d=2,3,4,5$ as above. In sum,
$$b_{3,d}-1\geq b_{5,d}-1$$
for all $d$ and the inequality is strict when $d \geq 5$.
\end{example}
\begin{remark}
In \cite{PSSY}, it is mentioned that after concrete computation, $p \mapsto b_{p,d}$ is decreasing in terms of $p$ for $d \leq 31$. We show their computational result of $b_{p,d}-1$ for $6 \leq d \leq 15$ here: 
\end{remark} 

$$b_{p,6}-1 = \frac{2(1+p^2)}{5(2+3p^2)},b_{p,7}-1 = \frac{45+86p^2+61p^4}{270+570p^2+720p^4}$$

$$b_{p,8}-1 = \frac{24+31p^2+17p^4}{168+273p^2+315p^4},b_{p,9}-1 = \frac{1575+3163p^2+3093p^4+1385p^6}{56(225+484p^2+659p^4+720p^6)}$$

$$b_{p,10}-1 = \frac{2(72+103p^2+82p^4+31p^6)}{9(144+242p^2+298p^4+315p^6)}$$

$$b_{p,11}-1 = \frac{99225+204444p^2+228694p^4+154396p^6+50521p^8}{90(11025+23941p^2+33811p^4+38935p^6+40320p^8)}$$

$$b_{p,12}-1 = \frac{2(2880+4354p^2+4079p^4+2396p^6+691p^8)}{55(1152+1972p^2+2518p^4+2773p^6+2835p^8)}$$

$$b_{p,13}-1 = \frac{9823275+20559681p^2+24778126p^4+20126754p^6+10482999p^8+2702765p^{10}}{132(893025+1950246p^2+2814296p^4+3349754p^6+3578279p^8+3628800p^{10})}$$

$$b_{p,14}-1 = \frac{4(43200+67614p^2+69473p^4+50025p^6+23427p^8+5461p^{10})}{39(57600+99752p^2+130332p^4+147723p^6+154543p^8+155925p^{10})}$$

$$b_{p,15}-1 = \mathsmaller{\frac{1404728325+2971290258p^2+3755076443p^4+3394389116p^6+2201503851p^8+937215826p^{10}+199360981p^{12}}{182(108056025+236872791p^2+346708946p^4+422040774p^6+461849829p^8+476298835p^{10}+479001600p^{12})}}$$

By taking derivative with respect to $p$ we can verify the decreasing property in these cases. So, we have verified the case $p=3$ and the case $d \leq 15$. Therefore, we may assume $p \geq 5$ and $d \geq 16$ for the rest part of this section. We will prove $p \mapsto b_{p,d}$ is decreasing in this case in the following subsections.

\subsection{Asymptotic behavior of $S_{p,d}$}
In this subsection, we derive the asymptotic behavior of $S_{p,d}$ with respect to $p$ and $d$. This behavior will indicate the exact term in $S_{p,d}$ to take as its approximation.

Let $\rho(z)$ be a meromorphic function with a pole of order $d\geq 2$ near $0$. Write
$$\rho(z)=c_{-d}z^{-d}+c_{1-d}z^{1-d}+c_{2-d}z^{2-d}+\ldots.$$
Then we would expect, after proving the absolute convergence of double series,
$$\sum_{1 \leq i \leq p-1,i \operatorname{odd}}\rho(\frac{i}{p})=p^dc_{-d}\sum_{1 \leq i \leq p-1,i \operatorname{odd}}i^{-d}+p^{d-1}c_{1-d}\sum_{1 \leq i \leq p-1,i \operatorname{odd}}i^{1-d}+\ldots +p^2c_{-2}\sum_{1 \leq i \leq p-1,i \operatorname{odd}}i^{-2}+O(p)$$
and
\begin{align*}
\sum_{1 \leq i \leq p-1,i \operatorname{odd}}(-1)^{(i-1)/2}\rho(\frac{i}{p})=p^dc_{-d}\sum_{1 \leq i \leq p-1,i \operatorname{odd}}(-1)^{(i-1)/2}i^{-d}\\+p^{d-1}c_{1-d}\sum_{1 \leq i \leq p-1,i \operatorname{odd}}(-1)^{(i-1)/2}i^{1-d}+\ldots
+p^2c_{-2}\sum_{1 \leq i \leq p-1,i \operatorname{odd}}(-1)^{(i-1)/2}i^{-2}+O(p).   
\end{align*}
To study the above sum, we recall the definition of the Dirichlet $\lambda$-function and Dirichlet $\beta$-function.
\begin{align*}
\lambda(d)=\sum_{i \geq 1, i \textup{ odd}}i^{-d},\beta(d)=\sum_{i \geq 1, i \textup{ odd}}(-1)^{(i-1)/2}i^{-d}.    
\end{align*}
Denote their partial sums by
\begin{align*}
\lambda_p(d)=\sum_{1 \leq i<p, i \textup{ odd}}i^{-d},\beta_p(d)=\sum_{1\leq i<p, i \textup{ odd}}(-1)^{(i-1)/2}i^{-d}.    
\end{align*}
Using Riemann sums over the function $f(x)=x^{-d}$, it is straightforward to check when $p \geq 3$ and $d \geq 2$,
\begin{align*}
0<(-1)^{(p-1)/2}(\beta(d)-\beta_p(d))\leq \lambda(d)-\lambda_p(d)\\
\leq p^{-d}+\frac{1}{2}\int_p^{\infty}x^{-d}dx=p^{-d}+\frac{p}{2(d-1)}p^{-d}<p^{d-1}.    
\end{align*}
We have also defined the Taylor coefficients $\rho_{d,i}$ such that 
$$z^d/\sin^d(z)=\rho_{d,0}+\rho_{d,1}z+\rho_{d,2}z^2+\ldots$$
near $z=0$ in subsection 4.3.
\begin{lemma}
$\rho_{d,i}\geq 0$ for any $i$. Moreover, we have
\begin{enumerate}
\item for each $i$, $\rho_{d,i}$ is polynomial in $d$;
\item $\rho_{d,i}=0$ if $i$ is odd.
\item $\rho_{d,i}$ is a polynomial of degree $i/2$ if $i$ is even.
\end{enumerate}
\end{lemma}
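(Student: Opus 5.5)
We will write $z^d/\sin^d(z)=(z/\sin z)^d=\exp\bigl(d\log(z/\sin z)\bigr)$ and read off all four claims from this exponential form.

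\textbf{Nonnegativity.} First we show that $z/\sin z$ has nonnegative Taylor coefficients. The cleanest route is the partial fraction expansion
$$\frac{1}{\sin z}=\frac1z+\sum_{n\ge1}(-1)^n\frac{2z}{z^2-n^2\pi^2},$$
but the signs there alternate, so instead we use the product formula $\sin z=z\prod_{n\ge1}(1-z^2/(n^2\pi^2))$. This gives
$$\log\frac{z}{\sin z}=-\sum_{n\ge1}\log\Bigl(1-\frac{z^2}{n^2\pi^2}\Bigr)=\sum_{n\ge1}\sum_{k\ge1}\frac{z^{2k}}{k\,n^{2k}\pi^{2k}}=\sum_{k\ge1}\frac{\zeta(2k)}{k\pi^{2k}}z^{2k}.$$
This is a power series in $z^2$ with strictly positive coefficients, convergent for $|z|<\pi$. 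Call it $g(z)=\sum_{k\ge1}g_kz^{2k}$ with $g_k>0$. Then $(z/\sin z)^d=\exp(d\,g(z))$. Since the exponential series has positive coefficients, for $d\ge0$ every coefficient of $\exp(dg)$ is a nonnegative combination of products of the $g_k$. This gives $\rho_{d,i}\ge0$, and it also gives (2) at once, since only even powers of $z$ occur.

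\textbf{Polynomiality and degree.} Expanding the exponential,
$$\exp(d\,g)=\sum_{m\ge0}\frac{d^m g(z)^m}{m!}.$$
Since $g(z)^m$ starts at $z^{2m}$, the coefficient of $z^{2j}$ receives contributions only from $m\le j$. Hence
$$\rho_{d,2j}=\sum_{m=0}^{j}\frac{d^m}{m!}\,[z^{2j}]\,g(z)^m,$$
which is a polynomial in $d$ of degree at most $j$. This is (1). The top coefficient (at $m=j$) is $g_1^j/j!=(1/6)^j/j!\neq0$, using $g_1=\zeta(2)/\pi^2=1/6$. So the degree is exactly $j=i/2$, which is (3).

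\textbf{Main obstacle.} The only subtle point is justifying the formal manipulations: taking the logarithm and using the product expansion near $0$, where $z/\sin z$ is holomorphic and nonzero for $|z|<\pi$. All identities hold as convergent power series in this disc, so coefficient extraction is legitimate. If one prefers to avoid $\zeta$ values, it is enough to know $g_k>0$, and that follows directly from the product formula as above.
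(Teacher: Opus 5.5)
Your proof is correct, and it extracts the $d$-dependence differently from the paper.

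\textbf{The paper's route.} The paper also starts from Euler's product. It expands each factor $(1-z^2/(n^2\pi^2))^{-d}$ by the negative binomial series to get $\rho_{d,i}\ge 0$. Then, in a separate step, it writes $(z/\sin z)^d=(1-u)^{-d}$ with $u=1-\frac{\sin z}{z}=\frac{z^2}{6}-\frac{z^4}{120}+\cdots$. This gives $\rho_{d,2j}=\sum_{n\le j}\binom{n+d-1}{n}[z^{2j}]u^n$, a polynomial in $d$ whose top term comes from $\binom{j+d-1}{j}6^{-j}$.

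\textbf{Your route.} You instead take logarithms, $g(z)=\log(z/\sin z)=\sum_k\frac{\zeta(2k)}{k\pi^{2k}}z^{2k}$, so that $d$ enters linearly in the exponent of $\exp(d\,g)$. Nonnegativity, evenness, polynomiality, and the exact degree (with leading coefficient $\frac{1}{6^j\,j!}$) all come from the single expansion $\rho_{d,2j}=\sum_{m\le j}\frac{d^m}{m!}[z^{2j}]g^m$. This matches the paper's values $\rho_{d,2}=d/6$ and $\rho_{d,4}=(5d^2+2d)/360$.

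\textbf{Comparison.} Your version is more uniform and makes the degree claim fully explicit, whereas the paper's ``so we are done'' leaves the leading coefficient to the reader. The cost is a logarithm and the $\zeta(2k)$ values; only positivity of the $g_k$ and $g_1=1/6$ are actually needed. The paper's expansion into products of $\binom{m+d-1}{m}$ has the advantage that it is exactly what gets reused in the later estimate $\rho_{d,2i}\le (d/4)^i$.
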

\begin{proof}
By Euler's sine product formula,
$$z^d\rho_d(z)=\prod_{n \in \mathbb{Z}_{\geq 1}}(1-\frac{z^2}{n^2\pi^2})^{-d}=\prod_{n \in \mathbb{Z}_{\geq 1}}(\sum_{m_n\geq 0}{m_n+d-1\choose m_n}(\frac{z^2}{n^2\pi^2})^{m_n}).$$
So all the coefficients $\rho_{d,i}$ are nonnegative. Moreover, $\sin(z)/z=1-\frac{1}{6}z^2+\frac{1}{120}z^4+\ldots$ and $(1-z)^{-d}=\sum_{n \geq 0}{n+d-1\choose n}z^n$. So
$$z^d\rho_d(z)=1+\sum_{n \geq 1}{n+d-1\choose n}(\frac{1}{6}z^2-\frac{1}{120}z^4+\ldots)^n.$$
So we are done.
\end{proof}
We compute the first three nonvanishing terms of $z^d\rho_d(z)$:
\begin{align*}
z^d\rho_d(z)=(1-\frac{1}{6}z^2+\frac{1}{120}z^4+O(z^6))^{-d}\\
=1+d(\frac{1}{6}z^2-\frac{1}{120}z^4)+\frac{d(d+1)}{2}(\frac{1}{6}z^2-\frac{1}{120}z^4)^2+O(z^6)\\
=1+\frac{d}{6}z^2+\frac{5d^2+2d}{360}z^4+O(z^6).    
\end{align*}
That is,
$$\rho_{d,0}=1,\rho_{d,2}=\frac{d}{6},\rho_{d,4}=\frac{5d^2+2d}{360}.$$
The positivity of the coefficient implies that we can express $S_{p,d}$ as a convergent double sum of terms as below. All terms will be positive and their sum is finite. This allows us to change the order of summation.

\begin{center}
\begin{tabular}{|c||c|c|c|c|}
\hline
The sum $S_{p,d}$ & $\textcolor{red}{\sin(\frac{\pi}{2p})^{-d}}$ & $\sin(\frac{3\pi}{2p})^{-d}$ & $\sin(\frac{5\pi}{2p})^{-d}$ & $\ldots$ \\
\hline\hline
$\boldsymbol{\rho_{d,0}(\frac{\pi}{2})^{-d}\lambda_p(d)p^d}$ & $\textcolor{red}{\boldsymbol{\rho_{d,0}(\frac{\pi}{2p})^{-d}}}$ & $\boldsymbol{\rho_{d,0}(\frac{3\pi}{2p})^{-d}}$ & $\boldsymbol{\rho_{d,0}(\frac{5\pi}{2p})^{-d}}$ & $\ldots$ \\
\hline
$\rho_{d,2}(\frac{\pi}{2})^{2-d}\lambda_p(d-2)p^{d-2}$ & $\textcolor{red}{\rho_{d,2}(\frac{\pi}{2p})^{2-d}}$ & $\rho_{d,2}(\frac{3\pi}{2p})^{2-d}$ & $\rho_{d,2}(\frac{5\pi}{2p})^{2-d}$ & $\ldots$ \\
\hline
$\rho_{d,4}(\frac{\pi}{2})^{4-d}\lambda_p(d-4)p^{d-4}$ & $\textcolor{red}{\rho_{d,4}(\frac{\pi}{2p})^{4-d}}$ & $\rho_{d,4}(\frac{3\pi}{2p})^{4-d}$ & $\rho_{d,4}(\frac{5\pi}{2p})^{4-d}$ & $\ldots$ \\
\hline
$\vdots$ & $\vdots$ & $\vdots$ & $\vdots$ & $\ddots$ \\
\hline
\end{tabular}  
\end{center}

\textbf{Method of estimation:} The scales of different terms vary for different choices of $p,d$:
\begin{enumerate}
\item when $d$ is large compared with $p$, the dominant part is given by the red terms (the second column), which is approximately $\alpha^d$. 
\item when $p$ is large compared with $d$, the dominant part is given by the bolded terms (the second row), which is approximately $Cp^d$.
\end{enumerate}
Therefore, we need to divide the proof into two cases. We choose the following division:
\begin{enumerate}
\item when $p\leq 2^{d/4}$, we say this is the \textbf{large $d$ case}. We will approximate the sum using the first column, which is a power of sines.
\item when $p \geq 2^{d/4}$, we say this is the \textbf{large $p$ case}. We will approximate the sum using first several rows, which are monomials of $p$.
\end{enumerate}
We will treat these two cases separately. The estimation of $T_{p,d}$ is completely the same; see the following table.

\begin{center}
\begin{tabular}{|c||c|c|c|c|}
\hline
The sum $T_{p,d}$ & $\textcolor{red}{\sin(\frac{\pi}{2p})^{-d}}$ & $-\sin(\frac{3\pi}{2p})^{-d}$ & $\sin(\frac{5\pi}{2p})^{-d}$ & $\ldots$ \\
\hline\hline
$\boldsymbol{\rho_{d,0}(\frac{\pi}{2})^{-d}\beta_p(d)p^d}$ & $\textcolor{red}{\boldsymbol{\rho_{d,0}(\frac{\pi}{2p})^{-d}}}$ & $\boldsymbol{-\rho_{d,0}(\frac{3\pi}{2p})^{-d}}$ & $\boldsymbol{\rho_{d,0}(\frac{5\pi}{2p})^{-d}}$ & $\ldots$ \\
\hline
$\rho_{d,2}(\frac{\pi}{2})^{2-d}\beta_p(d-2)p^{d-2}$ & $\textcolor{red}{\rho_{d,2}(\frac{\pi}{2p})^{2-d}}$ & $-\rho_{d,2}(\frac{3\pi}{2p})^{2-d}$ & $\rho_{d,2}(\frac{5\pi}{2p})^{2-d}$ & $\ldots$ \\
\hline
$\rho_{d,4}(\frac{\pi}{2})^{4-d}\beta_p(d-4)p^{d-4}$ & $\textcolor{red}{\rho_{d,4}(\frac{\pi}{2p})^{4-d}}$ & $-\rho_{d,4}(\frac{3\pi}{2p})^{4-d}$ & $\rho_{d,4}(\frac{5\pi}{2p})^{4-d}$ & $\ldots$ \\
\hline
$\vdots$ & $\vdots$ & $\vdots$ & $\vdots$ & $\ddots$ \\
\hline
\end{tabular}  
\end{center}

Actually the terms appearing in the double sum formula for $T_{p,d}$ are exactly the same terms for $S_{p,d}$ with sign changes; this means the sum is still absolutely convergent, so we can change the order of summation in $T_{p,d}$ as well.
\subsection{The large $d$ case}
In the large $d$ case, we approximate $S_{p,d}$ or $T_{p,d}$ using $\sin(\frac{\pi}{2p})^{-d}$. The sum of the other terms is approximately $p\sin(\frac{3\pi}{2p})^{-d}\sim p\sin(\frac{\pi}{2p})^{-d}3^{-d}$, which is much smaller than $\sin(\frac{\pi}{2p})^{-d}$. We will first prove that the approximation of $b_{p,d}-1$ decreases,
and then we show the approximation is sufficiently accurate to capture the decreasing property of the original value $b_{p,d}-1$. We first introduce an approximation lemma for inequalities.
\begin{lemma}[Approximation of inequality between fractions]
Let $A,B,C,D$ be 4 real numbers, $\tilde{A},\tilde{B},\tilde{C},\tilde{D}$ be another 4 real numbers which are the approximations. Suppose we have
$$0<\frac{\tilde{C}}{\tilde{D}}<\frac{\tilde{A}}{\tilde{B}}.$$
Assume $\epsilon$ is a real number such that
$$\frac{\frac{\tilde{A}}{\tilde{B}}-\frac{\tilde{C}}{\tilde{D}}}{\frac{\tilde{C}}{\tilde{D}}}\geq\epsilon>0.$$
Suppose we have
$$\max\{\frac{|A-\tilde{A}|}{|\tilde{A}|},\frac{|B-\tilde{B}|}{|\tilde{B}|},\frac{|C-\tilde{C}|}{|\tilde{C}|},\frac{|D-\tilde{D}|}{|\tilde{D}|}\}\leq \min\{\frac{1}{8}\epsilon,\frac{1}{8}\}.$$
Then
$$\frac{C}{D}<\frac{A}{B}.$$
\end{lemma}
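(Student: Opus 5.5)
The plan is to write each of the four quantities as a multiplicative perturbation of its approximation and then compare $AD$ with $BC$. Write $A=\tilde A(1+a)$, $B=\tilde B(1+b)$, $C=\tilde C(1+c)$, $D=\tilde D(1+d)$ with $|a|,|b|,|c|,|d|\le \eta:=\min\{\epsilon/8,1/8\}$. The hypothesis $0<\tilde C/\tilde D$ does not force any individual sign. Since $\eta\le 1/8<1$, however, each factor $1+x$ is positive, so $A$ has the sign of $\tilde A$, and similarly for $B$, $C$, $D$.

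\textbf{Step 1: reduce to a ratio comparison.} Because $\eta<1$, all four quantities are nonzero and
$$\frac{A}{B}=\frac{\tilde A}{\tilde B}\cdot\frac{1+a}{1+b},\qquad \frac{C}{D}=\frac{\tilde C}{\tilde D}\cdot\frac{1+c}{1+d}.$$
Both $\tilde A/\tilde B$ and $\tilde C/\tilde D$ are positive, and the correction factors are positive, so both fractions are positive. It therefore suffices to show
$$\frac{\tilde C}{\tilde D}\cdot\frac{1+c}{1+d}<\frac{\tilde A}{\tilde B}\cdot\frac{1+a}{1+b}.$$
By hypothesis $\tilde A/\tilde B\ge (1+\epsilon)\tilde C/\tilde D$. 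Dividing by $\tilde C/\tilde D>0$, it is enough to prove
$$\frac{(1+c)(1+b)}{(1+d)(1+a)}<1+\epsilon.$$

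\textbf{Step 2: the elementary estimate.} The left side is at most $\left(\frac{1+\eta}{1-\eta}\right)^2$. For $\eta\le 1/8$ we have $\frac{1+\eta}{1-\eta}=1+\frac{2\eta}{1-\eta}\le 1+\frac{16}{7}\eta$. Squaring gives
$$\left(\frac{1+\eta}{1-\eta}\right)^2\le 1+\frac{32}{7}\eta+\frac{256}{49}\eta^2\le 1+\eta\left(\frac{32}{7}+\frac{32}{49}\right)<1+6\eta,$$
where the middle inequality uses $\eta\le 1/8$. Finally $\eta\le\epsilon/8$ gives $6\eta\le \tfrac34\epsilon<\epsilon$, since $\epsilon>0$. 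This closes the argument.

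\textbf{Main obstacle.} The only delicate point is the sign issue in Step 1. The hypothesis controls only the positivity of the ratios, so one must note that relative errors below $1$ preserve the signs of $A,B,C,D$. After that, dividing by the positive quantity $\tilde C/\tilde D$ is legitimate and the rest is a routine constant check. The constant $1/8$ is chosen generously, so no careful optimization is needed.
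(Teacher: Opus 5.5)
Your proof is correct and is essentially the paper's argument: the paper also reduces the claim to $(1+\epsilon)\bigl(\frac{1-\eta}{1+\eta}\bigr)^2>1$, checking it separately for $\epsilon\le 1$ (with $\eta=\epsilon/8$) and for $\epsilon\ge 1$ (with $\eta=1/8$ and $1+\epsilon\ge 2$). Your uniform bound $\bigl(\frac{1+\eta}{1-\eta}\bigr)^2<1+6\eta\le 1+\tfrac34\epsilon$ merges those two cases, and your remark that relative errors below $1$ preserve the signs of $A,B,C,D$ makes explicit a point the paper leaves unstated.
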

\begin{proof}
The above result is the consequence of two inequalities: when $\epsilon\leq 1$,
$$\frac{(1-\frac{1}{8}\epsilon)^2}{(1+\frac{1}{8}\epsilon)^2}(1+\epsilon)>1$$
and when $\epsilon \geq 1$,
$$\frac{(1-\frac{1}{8})^2}{(1+\frac{1}{8})^2}2>1.$$
\end{proof}
Now we prove the inequality for approximations. Set
$$\tilde{b}_{p,d}-1=\frac{2\sin(\frac{\pi}{2p})^{-d}-2\sin(\frac{\pi}{2p})^{2-d}}{p^d-2\sin(\frac{\pi}{2p})^{2-d}}.$$
If we denote $\alpha_p=\frac{\pi}{2p}$, then $\tilde{b}_{p,d}-1=2F(\alpha_p)$ for the real function
$$F(\alpha)=\frac{\alpha^d\sin(\alpha)^{-d}-\alpha^d\sin(\alpha)^{2-d}}{(\frac{\pi}{2})^d-2\alpha^d\sin(\alpha)^{2-d}}.$$
It is easy to see $F(\alpha)>0$ for $0<\alpha\leq\frac{\pi}{10}$. Let $a=\alpha$, $s=\sin\alpha$, $c=\cos\alpha$, and assume $0<\alpha\leq \frac{\pi}{10}$ from now on. Then
$$F(\alpha)=\frac{a^ds^{-d}c^2}{(\frac{\pi}{2})^d-2a^ds^{2-d}},$$
$$F'(\alpha)=\frac{a^{d-1}cs^{-1-d}[(\frac{\pi}{2})^d(dcs-2as^2-dac^2)+4a^{d+1}s^{2-d}]}{((\frac{\pi}{2})^d-2a^ds^{2-d})^2}.$$
Now we find a lower bound of $F'(\alpha)$. We observe that
$$z \mapsto z/\sin(z)$$
is increasing on $(0,\pi/2)$ and $0<\alpha<\frac{\pi}{10}$, so
$$1<a/s\leq (\frac{\pi}{10})/(\sin\frac{\pi}{10})=\frac{(\sqrt{5}+1)\pi}{10}<1.02$$
and for any $d\geq 16$
$$\frac{1}{2}(\frac{\pi}{2})^d<(\frac{\pi}{2})^d-2a^ds^{2-d}<(\frac{\pi}{2})^d,4a^{d+1}s^{2-d}\geq 0,$$
$$dcs-2as^2-dac^2=dc^2(\tan\alpha-\alpha)-2as^2\geq dc^2\frac{a^3}{3}-2a^3=2a^3(\frac{d}{6}c^2-1).$$
In sum, we replace $a/s$ with $1$ or $1.02$ when appropriate, throw away the positive term $4a^{d+1}s^{2-d}$, and use $c\geq \frac{\sqrt{3}}{2}$ since $\alpha<\frac{\pi}{6}$, we get
$$F'(\alpha)\geq (\frac{\pi}{2})^{-d}c2a(\frac{d}{6}c^2-1)\geq (\frac{\pi}{2})^{-d}\sqrt{3}a(\frac{d}{8}-1).$$
Therefore,
\begin{align*}
F(\alpha_p)-F(\alpha_{p+2})\geq (\frac{\pi}{2})^{-d}\sqrt{3}\alpha_{p+2}(\frac{d}{8}-1)(\alpha_p-\alpha_{p+2})\\
\geq (\frac{\pi}{2})^{-d}\sqrt{3}(\frac{\pi}{2})^2(\frac{d}{8}-1)\frac{2}{p(p+2)^2}.  
\end{align*}
We have
$$F(\alpha_{p+2})\leq \frac{2\cdot1.02^d}{(\pi/2)^d},$$
so
\begin{align*}
\frac{F(\alpha_p)-F(\alpha_{p+2})}{F(\alpha_{p+2})}\geq \epsilon:=1.02^{-d}\sqrt{3}(\frac{\pi}{2})^2(\frac{d}{8}-1)\frac{1}{p(p+2)^2}.
\end{align*}
Now we will be proving a sequence of inequalities which lead to \Cref{5.1} in the large $d$ case. We list all the inequalities we will prove here:
\begin{align}
\frac{|S_{p,d}-\sin(\frac{\pi}{2p})^{-d}|}{\sin(\frac{\pi}{2p})^{-d}-\sin(\frac{\pi}{2p})^{2-d}}<\frac{1}{16}\epsilon,\frac{|S_{p,d}-\sin(\frac{\pi}{2p})^{-d}|}{\sin(\frac{\pi}{2p})^{-d}-\sin(\frac{\pi}{2p})^{2-d}}<\frac{1}{16}, \tag{A1 \& A2}\\
\frac{|S_{p,d-2}-\sin(\frac{\pi}{2p})^{2-d}|}{\sin(\frac{\pi}{2p})^{-d}-\sin(\frac{\pi}{2p})^{2-d}}<\frac{1}{16}\epsilon,\frac{|S_{p,d-2}-\sin(\frac{\pi}{2p})^{2-d}|}{\sin(\frac{\pi}{2p})^{-d}-\sin(\frac{\pi}{2p})^{2-d}}<\frac{1}{16}, \tag{A3 \& A4}\\
\frac{|2S_{p,d-2}-2\sin(\frac{\pi}{2p})^{2-d}+1|}{p^d-2\sin(\frac{\pi}{2p})^{2-d}}<\frac{1}{8}\epsilon,\frac{|2S_{p,d-2}-2\sin(\frac{\pi}{2p})^{2-d}+1|}{p^d-2\sin(\frac{\pi}{2p})^{2-d}}<\frac{1}{8}. \tag{A5 \& A6}
\end{align}
\begin{align}
2<\frac{1}{16}2^{-d}(\frac{3+\sqrt{5}}{2\cdot 1.02})^{d}\frac{\sqrt{5}+5}{8}\sqrt{3}(\frac{\pi}{2})^2(\frac{d}{8}-1), \tag{C1}\\
2^{d/4}(\frac{3+\sqrt{5}}{2})^{-d}\frac{2(5-\sqrt{5})}{5}<\frac{1}{16}, \tag{C2}\\
4\frac{9\pi^2}{4}<2^{-\frac{d}{2}}(\frac{3\pi}{2})^{d}(\frac{3\pi(\sqrt{5}-1)}{10})^{-d}\frac{1}{8}1.02^{-d}\sqrt{3}(\frac{\pi}{2})^2(\frac{d}{8}-1), \tag{C3}\\
2\frac{9\pi^2}{4\cdot 5}(\frac{3\pi}{2})^{-d}(\frac{3\pi(\sqrt{5}-1)}{10})^{d}<\frac{1}{8}. \tag{C4}
\end{align}

Before we prove the above inequalities, we establish some other inequalities independent of (A1)-(A6) and (C1)-(C4). We observe that the following equalities hold:
\begin{align}
|S_{p,d-2}-\sin(\frac{\pi}{2p})^{2-d}|\leq |S_{p,d}-\sin(\frac{\pi}{2p})^{-d}|\leq p\cdot \sin(\frac{3\pi}{2p})^{-d}, \tag{B1}   
\end{align}
which is true since the difference consists of less than $p$ terms no greater than $\sin(\frac{3\pi}{2p})^{-d}$. Using triple-angle formula for sines, we have
\begin{align*}
\frac{p\sin(\frac{3\pi}{2p})^{-d}}{\sin(\frac{\pi}{2p})^{-d}-\sin(\frac{\pi}{2p})^{2-d}}=p\cdot (3-4\sin^2(\frac{\pi}{2p}))^{-d}\cos(\frac{\pi}{2p})^{-2}\\
\leq p\cdot (3-4\sin^2(\frac{\pi}{10}))^{-d}\cos(\frac{\pi}{10})^{-2}=p(\frac{3+\sqrt{5}}{2})^{-d}\frac{2(5-\sqrt{5})}{5}. \tag{B2}
\end{align*}
(B1) and (B2) imply that the left side of (A1) $\sim$ (A4) is at most $p(\frac{3+\sqrt{5}}{2})^{-d}\frac{2(5-\sqrt{5})}{5}$.
We also have $\sin(\frac{\pi}{2}x)\geq x$ for $x \in [0,1]$, therefore,
\begin{align}
p^d-2\sin(\frac{\pi}{2p})^{2-d}\geq p^d-2p^{d-2}=(1-\frac{2}{p^2})p^d\geq \frac{1}{2}p^d \tag{B3} 
\end{align}
and $S_{p,d-2}-\sin(\frac{\pi}{2p})^{2-d}$ is the sum of $(p-3)/2$ terms between $1$ and $\sin(\frac{3\pi}{2p})^{2-d}$, so
\begin{align*}
|2S_{p,d-2}-2\sin(\frac{\pi}{2p})^{2-d}+1|\leq p\sin(\frac{3\pi}{2p})^{2-d}
\leq p(\frac{3\pi}{2p})^2(\frac{3\pi}{2p})^{-d}\frac{(\frac{3\pi}{2p})^{d}}{(\sin\frac{3\pi}{2p})^{d}}\\
\leq p(\frac{3\pi}{2p})^2(\frac{3\pi}{2p})^{-d}\frac{(\frac{3\pi}{10})^{d}}{(\sin\frac{3\pi}{10})^{d}}=\frac{9\pi^2}{4p}(\frac{3\pi}{2p})^{-d}(\frac{3\pi(\sqrt{5}-1)}{10})^{d}. \tag{B4}
\end{align*}
(B3) and (B4) imply that the left side of (A5) $\sim$ (A6) is at most $2\frac{9\pi^2}{4p}(\frac{3\pi}{2})^{-d}(\frac{3\pi(\sqrt{5}-1)}{10})^{d}$.

Proof of (A1-A6) implies $b_{p,d}-1>b_{p+2,d}-1$ in the large $d$ case: this is the approximation lemma applied to $A/B=\frac{1}{2}(b_{p,d}-1)$, $C/D=\frac{1}{2}(b_{p+2,d}-1)$, $\tilde{A}/\tilde{B}=F(\alpha_p)=\frac{1}{2}(\tilde{b}_{p,d}-1)$ and $\tilde{C}/\tilde{D}=F(\alpha_{p+2})=\frac{1}{2}(\tilde{b}_{p+2,d}-1)$.

Proof of (C1) implies (A1), (A3): Suppose (C1) holds. We rewrite (C1) as
$$2\cdot 2^d<\frac{1}{16}(\frac{3+\sqrt{5}}{2\cdot 1.02})^{d}\frac{\sqrt{5}+5}{8}\sqrt{3}(\frac{\pi}{2})^2(\frac{d}{8}-1).$$
By the large $d$ condition that $p\leq 2^{d/4}$. If $p \geq 5$, then $(p+2)^2< 2p^2$. So
$$p^2(p+2)^2<2p^4\leq2\cdot 2^d<\frac{1}{16}(\frac{3+\sqrt{5}}{2\cdot 1.02})^{d}\frac{\sqrt{5}+5}{8}\sqrt{3}(\frac{\pi}{2})^2(\frac{d}{8}-1).$$
So after exchanging terms on both sides, we get
$$p(\frac{3+\sqrt{5}}{2})^{-d}\frac{2(5-\sqrt{5})}{5}<\frac{1}{16}1.02^{-d}\sqrt{3}(\frac{\pi}{2})^2(\frac{d}{8}-1)\frac{1}{p(p+2)^2}.$$
Now (A1) and (A3) are consequences of the above inequality plus (B1) and (B2).

Proof of (C2) implies (A2), (A4): suppose (C2) holds. Then by the large $d$ assumption $p \leq 2^{d/4}$ we see
$$p(\frac{3+\sqrt{5}}{2})^{-d}\frac{2(5-\sqrt{5})}{5}<\frac{1}{16}.$$
Now (A2) and (A4) are consequences of the above inequality plus (B1) and (B2).

Proof of (C3) implies (A5): we rewrite (C3) as
$$4\frac{9\pi^22^{\frac{d}{2}}}{4}(\frac{3\pi}{2})^{-d}(\frac{3\pi(\sqrt{5}-1)}{10})^{d}<\frac{1}{8}1.02^{-d}\sqrt{3}(\frac{\pi}{2})^2(\frac{d}{8}-1).$$
Using $(p+2)^2<2p^2<2\cdot 2^{d/2}$, we see (C3) implies
$$2\frac{9\pi^2(p+2)^2}{4}(\frac{3\pi}{2})^{-d}(\frac{3\pi(\sqrt{5}-1)}{10})^{d}<\frac{1}{8}1.02^{-d}\sqrt{3}(\frac{\pi}{2})^2(\frac{d}{8}-1),$$
which can be rewritten as
$$2\frac{9\pi^2}{4p}(\frac{3\pi}{2})^{-d}(\frac{3\pi(\sqrt{5}-1)}{10})^{d}<\frac{1}{8}1.02^{-d}\sqrt{3}(\frac{\pi}{2})^2(\frac{d}{8}-1)\frac{1}{p(p+2)^2}.$$
This, together with (B3) and (B4), implies (A5).

Proof of (C4) implies (A6): $p \geq 5$ and (C4) imply
$$2\frac{9\pi^2}{4p}(\frac{3\pi}{2})^{-d}(\frac{3\pi(\sqrt{5}-1)}{10})^{d}<\frac{1}{8}.$$
This, together with (B3) and (B4), implies (A6).

Proof of (C1)-(C4): we have
\begin{align*}
\frac{1}{2}\frac{3+\sqrt{5}}{2}>1,\sqrt[4]{2}\frac{2}{3+\sqrt{5}}<1,\\
\frac{1}{\sqrt{2}}\frac{3\pi}{2}\frac{10}{3\pi(\sqrt{5}-1)}\frac{1}{1.02}>1,\frac{2}{3\pi}\frac{3\pi(\sqrt{5}-1)}{10}<1.   
\end{align*}
Therefore, the right-hand sides of (C1) and (C3) are increasing in $d$, and the left-hand sides of (C2) and (C4) are decreasing in $d$. Therefore, it suffices to verify the inequalities when $d=16$ by straightforward computation. When $d=16$, these inequalities are approximately
$$2<13.08,3.63\cdot10^{-6}<\frac{1}{16},4\frac{9\pi^2}{4}<7.81\cdot10^6,1.73\cdot10^{-9}<\frac{1}{8},$$
which are true.

The estimate for $T_{p,d}$ is completely the same as $S_{p,d}$ since 
$$\max\{|T_{p,d-2}-\sin(\frac{\pi}{2p})^{2-d}|, |T_{p,d}-\sin(\frac{\pi}{2p})^{-d}|\}\leq \frac{p-3}{2}\cdot \sin(\frac{3\pi}{2p})^{-d}$$
still holds. Therefore, the large $d$ case is proved.

\subsection{The large $p$ case}
In this subsection we assume $d \geq 16$ and $p \geq 2^{d/4}$ is an odd number. In particular, $p \geq 16$. In the large $p$ case, the sum $S_{p,d}$ is approximately a polynomial of $p$. We will prove \Cref{5.1} using positivity of coefficients of the polynomial.

We have seen that the coefficients of the Taylor expansion of $z^d\rho_d(z)$ are always positive. This positivity leads to a fine estimate of $z^d\rho_d(z)$ up to a certain order.
\begin{proposition}
Let $D$ be any positive integer and $P_{d,D}(z)=\rho_{d,0}+\rho_{d,1}z+\ldots+\rho_{d,D-1}z^{D-1}$ be the Taylor expansion of $z^d\rho_d(z)$ up to $z^{D-1}$. Then for $z \in (0,\pi/2)$ we have
\begin{align*}
P_{d,D}(z)\leq z^d\rho_d(z)\leq P_{d,D}(z)+\frac{z^D}{(\pi/2)^D}(\rho_d(\frac{\pi}{2})-P_{d,D}(\frac{\pi}{2}))\\
=P_{d,D}(z)+((\frac{\pi}{2})^{-D}\rho_d(\frac{\pi}{2})-(\frac{\pi}{2})^{-D}P_{d,D}(\frac{\pi}{2}))z^D\leq P_{d,D}(z)+(\frac{\pi}{2})^{-D}\rho_d(\frac{\pi}{2})z^D.    
\end{align*}
In particular, when $D=d-1$, we have
\begin{align*}
P_{d,d-1}(z) \leq z^d\rho_d(z)
\leq P_{d,d-1}(z)+\frac{\pi}{2}z^{d-1}.
\end{align*}
\end{proposition}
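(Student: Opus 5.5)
The plan is to use the nonnegativity of the Taylor coefficients $\rho_{d,i}$ from the preceding lemma. Set $g(z)=z^d\rho_d(z)=(z/\sin z)^d$. This function is holomorphic on $|z|<\pi$, and its power series $\sum_i \rho_{d,i}z^i$ converges there with all coefficients $\geq 0$.

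For the lower bound we write, for $z\in(0,\pi/2)$,
$$g(z)-P_{d,D}(z)=\sum_{i\geq D}\rho_{d,i}z^i.$$
Every term on the right is nonnegative, so $P_{d,D}(z)\leq g(z)$.

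For the upper bound we factor out $z^D$:
$$g(z)-P_{d,D}(z)=z^D\sum_{i\geq D}\rho_{d,i}z^{i-D}.$$
The function $h(z)=\sum_{i\geq D}\rho_{d,i}z^{i-D}$ has nonnegative coefficients and converges at $z=\pi/2<\pi$. It is therefore nondecreasing on $[0,\pi/2]$, so $h(z)\leq h(\pi/2)$. Here $h(\pi/2)=(\pi/2)^{-D}\bigl(g(\pi/2)-P_{d,D}(\pi/2)\bigr)$, which gives the first upper bound.

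The equality in the middle of the displayed chain is just a rewriting. The final inequality drops the term $-(\pi/2)^{-D}P_{d,D}(\pi/2)\leq 0$, which is nonpositive because $P_{d,D}$ has nonnegative coefficients. One point needs care: the statement writes $\rho_d(\pi/2)$, and this must be read as the value of $g$ at $\pi/2$, that is $z^d\rho_d(z)|_{z=\pi/2}=(\pi/2)^d$, since $\sin(\pi/2)=1$. I will check this reading against the special case.

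For $D=d-1$, the last bound becomes
$$(\tfrac{\pi}{2})^{-(d-1)}\,g(\tfrac{\pi}{2})\,z^{d-1}=(\tfrac{\pi}{2})^{-(d-1)}(\tfrac{\pi}{2})^{d}\,z^{d-1}=\tfrac{\pi}{2}\,z^{d-1},$$
which matches the stated special case and confirms the interpretation.

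The only step that needs real justification is the monotonicity of $h$ together with convergence at $\pi/2$. Both follow because the radius of convergence of $g$ is $\pi$, the distance from $0$ to the nearest zero of $\sin$ other than $0$. So the argument is a straightforward power-series comparison.
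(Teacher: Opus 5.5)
Your proof is correct and follows essentially the same route as the paper. Nonnegativity of the $\rho_{d,i}$ gives the lower bound, and the termwise comparison $\rho_{d,i}z^i\le \rho_{d,i}z^D(\frac{\pi}{2})^{i-D}$ for $i\ge D$ gives the upper bound; this is equivalent to your monotonicity of $h$, using that the series converges at $\pi/2<\pi$.

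You are also right that $\rho_d(\frac{\pi}{2})$ in the statement must be read as the value of $z^d\rho_d(z)$ at $\pi/2$, namely $(\frac{\pi}{2})^d$. Read literally, $\sin(\frac{\pi}{2})^{-d}=1$ would make the first upper bound at most $P_{d,D}(z)$, which is false, and only your reading produces the stated $\frac{\pi}{2}z^{d-1}$.
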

\begin{proof}
The first line is just a reformulation of the inequality below for $z \in (0,\pi/2)$, which comes from the positivity of the coefficients in the Taylor expansion:
\begin{align*}
\rho_{d,0}+\rho_{d,1}z+\ldots+\rho_{d,D}z^D\\
\leq \rho_{d,0}+\rho_{d,1}z+\ldots+\rho_{d,D}z^D+\rho_{d,D+1}z^{D+1}+\rho_{d,D+2}z^{D+2}+\ldots\\
\leq \rho_{d,0}+\rho_{d,1}z+\ldots+\rho_{d,D}z^D+\rho_{d,D+1}z^D(\frac{\pi}{2})+\rho_{d,D+2}z^D(\frac{\pi}{2})^2+\ldots.
\end{align*}
The second line is straightforward as $P_{d,D}(\frac{\pi}{2})\geq 0$.
\end{proof}
\begin{lemma}\label[lemma]{5.9}
We have $\rho_{d,2i} \leq (\frac{d}{4})^i$ for $2i \leq d$.    
\end{lemma}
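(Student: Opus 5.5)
The plan is to bound the Taylor coefficients of $(z/\sin z)^d$ by a Cauchy-type estimate on the positive real axis, which works because all the coefficients are nonnegative (the positivity lemma above). Since $z^d\rho_d(z)=\sum_j\rho_{d,2j}z^{2j}$ with $\rho_{d,2j}\geq 0$, for every real $z\in(0,\pi)$ we get
$$\rho_{d,2i}\leq z^{-2i}\Big(\frac{z}{\sin z}\Big)^d .$$
Write $(z/\sin z)^d=\exp(d\,g(z))$ with $g(z)=-\log\frac{\sin z}{z}$. From Euler's product,
$$g(z)=\sum_{n\geq 1}-\log\Big(1-\frac{z^2}{n^2\pi^2}\Big)=\sum_{k\geq1}\frac{\zeta(2k)}{k\pi^{2k}}z^{2k}.$$
This is a power series in $t=z^2$ with nonnegative coefficients and leading coefficient $\frac16$. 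Hence $g(z)/z^2$ is increasing on $(0,\pi)$.

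The first step disposes of the small cases directly. For $i=0$ we have $\rho_{d,0}=1$. The values $\rho_{d,2}=\frac d6\le\frac d4$ and $\rho_{d,4}=\frac{5d^2+2d}{360}\le\frac{d^2}{16}$ (valid for $d\ge1$) were computed above.

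For $i\geq 3$ with $2i\le d$, choose $z^2=4i/d$, which lies in $(0,2]$, so $z\le\sqrt2<\pi/2$. Then $z^{-2i}=(d/4)^i\,i^{-i}$. By monotonicity of $g(z)/z^2$,
$$d\,g(z)\le d z^2\,\frac{g(\sqrt2)}{2}=2i\,g(\sqrt2).$$
Numerically, $g(\sqrt2)=-\log(\sin\sqrt2/\sqrt2)\approx 0.359$, so $d\,g(z)\le 0.72\,i$. This gives
$$\rho_{d,2i}\le\Big(\frac d4\Big)^i\Big(\frac{e^{0.72}}{i}\Big)^i\le\Big(\frac d4\Big)^i,$$
since $e^{0.72}<2.06<3\le i$.

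The main obstacle is only the choice of the evaluation point $z$ and the numerical check. The point $z^2=4i/d$ is what makes the $(d/4)^i$ factor appear exactly, and the constraint $2i\le d$ is what keeps $z$ inside the disk of convergence with room to spare. What remains is a rigorous upper bound for $g(\sqrt2)$, which can be obtained from $\sin x\ge x-x^3/6$. The cases $i=1,2$ must be handled separately because the crude exponent bound $e^{0.72\,i}\le i^i$ fails there, and the explicit formulas cover them.
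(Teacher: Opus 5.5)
Your proof is correct, and it takes a different route from the paper.

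\textbf{The paper's route.} The paper bounds the coefficient termwise from the Euler product. The coefficient of $(z^2/\pi^2)^i$ is a sum over multisets $\{n_j^{m_j}\}$ of $\prod_j {m_j+d-1\choose m_j}\, n_j^{-2m_j}$. Each binomial factor is bounded by $(3d/2)^{m_j}$, which is where $2i\le d$ is used. The sum over multisets is at most $\zeta(2)^i=(\pi^2/6)^i$. Together these give $\rho_{d,2i}\le(\frac{3d}{2}\cdot\frac16)^i=(\frac d4)^i$ uniformly in $i$, with no case split and no numerics.

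\textbf{Your route.} You use a one-point Cauchy (saddle-point) estimate on the positive real axis, $\rho_{d,2i}\le z^{-2i}(z/\sin z)^d$. This needs only the nonnegativity of all Taylor coefficients, which is valid on the whole disk $|z|<\pi$. You combine it with the monotonicity of $g(z)/z^2$, where $g(z)=\sum_k \frac{\zeta(2k)}{k\pi^{2k}}z^{2k}$. Here the hypothesis $2i\le d$ serves only to keep $z=\sqrt{4i/d}\le\sqrt2$ well inside the disk of convergence. The rigorous bound $g(\sqrt2)\le\log\frac32$ from $\sin x\ge x-x^3/6$ then gives $\rho_{d,2i}\le(\frac d4)^i(\frac{9}{4i})^i\le(\frac d4)^i(\frac34)^i$ for $i\ge3$, so the numerics are not even delicate.

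\textbf{Comparison.} Your estimate is strictly stronger than the lemma for $i\ge3$. It reflects the genuine factorial decay of these coefficients: $\rho_{d,2i}$ is a polynomial in $d$ with leading term $d^i/(6^i\, i!)$. The paper's argument discards this decay when it drops the $m_j!$ denominators and the multinomial slack in $\zeta(2)^i$. The price of your method is that the crude factor $(9/(4i))^i$ exceeds $1$ for $i=1,2$. Those cases must therefore be handled separately through the explicit values $\rho_{d,2}=\frac d6$ and $\rho_{d,4}=\frac{5d^2+2d}{360}$, which you do correctly. The paper's argument avoids this case split entirely.
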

\begin{proof}
By Euler's sine product formula,  
\begin{align*}
z^d\rho_d(z)=\prod_{n \in \mathbb{Z}_{\geq 1}}(1-\frac{z^2}{n^2\pi^2})^{-d}\\
=\prod_{n \in \mathbb{Z}_{\geq 1}}(\sum_{m_n \geq 0}{m_n+d-1\choose m_n}(\frac{z^2}{n^2\pi^2})^{m_n}).
\end{align*}
We analyze the coefficient of $(\frac{z^2}{\pi^2})^i, 2i \leq d$ here, which is given by the partitions of $i$, that is, unordered sets $m_1,\ldots,m_k\geq 1$ such that $i=m_1+\ldots+m_k$. The coefficient will be:
$$\sum_{n_1,\ldots,n_k \in \mathbb{Z}_{\geq 1}\textup{ pairwise distinct}}\prod_{1 \leq j \leq k,\sum_j m_j=i} {m_j+d-1\choose m_j}\frac{1}{n_j^{2m_j}}.$$
We can throw away the denominator in ${m_j+d-1\choose m_j}=\frac{m_j(m_j+1)\ldots(m_j+d-1)}{m_j!}$ and replace all factors in the numerator with $3d/2$ since $m_j<d/2$, so
\begin{align*}
\sum_{n_1,\ldots,n_k \in \mathbb{Z}_{\geq 1}\textup{ pairwise distinct}}\prod_{1 \leq j \leq k,\sum_j m_j=i} {m_j+d-1\choose m_j}\frac{1}{n_j^{2m_j}}\\
\sum_{n_1,\ldots,n_k \in \mathbb{Z}_{\geq 1}\textup{ pairwise distinct}}\prod_{1 \leq j \leq k,\sum_j m_j=i} (3d/2)^{m_j}\frac{1}{n_j^{2m_j}}\\
=(3d/2)^i\sum_{n_1,\ldots,n_k \in \mathbb{Z}_{\geq 1}\textup{ pairwise distinct}}\prod_{1 \leq j \leq k,\sum_j m_j=i} \frac{1}{n_j^{2m_j}}\\
=(3d/2)^i(\sum_{n \geq 1}\frac{1}{n^2})^i=(\frac{3d}{2})^i(\frac{\pi^2}{6})^i.
\end{align*}
Therefore,
$$\rho_{d,2i}\leq (\frac{3d}{2})^i(\frac{\pi^2}{6})^i(\frac{1}{\pi^2})^i= (\frac{d}{4})^i.$$
\end{proof}
\begin{theorem}
We have the following estimates of $S_{p,d}$ and $T_{p,d}$:
$$S_{p,d}=\sum_{0 \leq i \leq d-2}\rho_{d,i}(\frac{\pi}{2})^{i-d}\lambda(d-i)p^{d-i}+R_{2,p,d}+R_{3,p,d}$$
and
$$T_{p,d}=\sum_{0 \leq i \leq d-2}\rho_{d,i}(\frac{\pi}{2})^{i-d}\beta(d-i)p^{d-i}+R'_{2,p,d}+R'_{3,p,d},$$
where $0 \leq R_{2,p,d}, R'_{2,p,d} \leq p^2, 0 \leq -R_{3,p,d},-R'_{3,p,d} \leq p(\frac{d}{4})^{d/2}$.
\end{theorem}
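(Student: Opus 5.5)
The plan is to expand each summand of $S_{p,d}$ and $T_{p,d}$ with the Taylor bounds for $z^d\rho_d(z)$ from the proposition preceding \Cref{5.9}, resum column by column into the Dirichlet partial sums $\lambda_p$ and $\beta_p$, and then replace these partial sums by the full values $\lambda(d-i)$ and $\beta(d-i)$. The Taylor remainder will give $R_{2,p,d}$. The tail correction will give $R_{3,p,d}$, which \Cref{5.9} controls.

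\textbf{Step 1: Taylor truncation.} For odd $1\le i\le p-1$ put $z_i=\frac{i\pi}{2p}\in(0,\frac{\pi}{2})$. Then
$$\sin(z_i)^{-d}=z_i^{-d}\cdot z_i^d\rho_d(z_i).$$
Apply the case $D=d-1$ of the proposition:
$$P_{d,d-1}(z_i)\le z_i^d\rho_d(z_i)\le P_{d,d-1}(z_i)+\tfrac{\pi}{2}z_i^{d-1}.$$
Hence
$$\sin(z_i)^{-d}=\sum_{0\le j\le d-2}\rho_{d,j}z_i^{j-d}+r_i,\qquad 0\le r_i\le \tfrac{\pi}{2}z_i^{-1}=\tfrac{p}{i}\le p.$$
There are $(p-1)/2$ odd indices $i$, so $R_{2,p,d}:=\sum_i r_i$ satisfies $0\le R_{2,p,d}\le p^2$.

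\textbf{Step 2: Resummation.} Summing the main terms over odd $i$ gives
$$\sum_i z_i^{j-d}=(\tfrac{\pi}{2})^{j-d}p^{d-j}\lambda_p(d-j).$$
This produces the stated main term with $\lambda_p$ in place of $\lambda$. Only even $j$ contribute, so $d-j\ge 2$ throughout.

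\textbf{Step 3: The tail term.} Define
$$R_{3,p,d}=-\sum_{j}\rho_{d,j}(\tfrac{\pi}{2})^{j-d}p^{d-j}\bigl(\lambda(d-j)-\lambda_p(d-j)\bigr)\le 0.$$
By the Riemann-sum tail bound, $\lambda(m)-\lambda_p(m)\le p^{-m}\bigl(1+\frac{p}{2(m-1)}\bigr)\le p^{-m}(1+\frac p2)$ for $m\ge2$. Here I use the correct form of that bound; the displayed ``$<p^{d-1}$'' in the text should read $\le p^{-d}(1+p/2)$. Therefore
$$-R_{3,p,d}\le (1+\tfrac p2)\sum_{j\text{ even}\le d-2}\rho_{d,j}(\tfrac{2}{\pi})^{d-j}.$$
Now apply \Cref{5.9}, $\rho_{d,j}\le(d/4)^{j/2}$. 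Since $d\ge16$ we have $d/4\ge4$, so the sum is dominated by a geometric series with ratio $(4/d)(2/\pi)^2<1/8$ counted from the top index. The whole sum is therefore at most a small constant times $(d/4)^{(d-2)/2}$, which is far below $(d/4)^{d/2}$. Together with $1+p/2\le p$ this gives $0\le -R_{3,p,d}\le p(d/4)^{d/2}$.

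\textbf{Step 4: The case of $T_{p,d}$.} I would rerun Steps 1 to 3 with the signs $(-1)^{(i-1)/2}$. The main part resums to $\beta_p(d-j)$, and the tail $|\beta-\beta_p|\le\lambda-\lambda_p$ gives the same absolute bound for $R'_{3,p,d}$.

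\textbf{Main obstacle.} I expect the sign assertions $R'_{2,p,d}\ge0$ and $-R'_{3,p,d}\ge0$ to be the delicate point, since the summands now alternate. For $R'_2$, I would use that the remainder $z^{-d}(z^d\rho_d-P_{d,d-1})=\sum_{j\ge d}\rho_{d,j}z^{j-d}$ is nonnegative and increasing in $z$. The alternating sum is then controlled by its largest term, which is $\le p$, so $|R'_{2,p,d}|\le p\le p^2$. For $R'_3$, the sign of $\beta-\beta_p$ is $(-1)^r$. So if the stated sign conventions fail for one residue class of $p$ mod $4$, I would fall back on the absolute bounds $|R'_2|\le p^2$ and $|R'_3|\le p(d/4)^{d/2}$. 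These are all that the subsequent large-$p$ monotonicity argument should need.
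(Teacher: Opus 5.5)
For $S_{p,d}$, your Steps 1--3 are the paper's proof: truncate with the $D=d-1$ positivity bound, resum into $\lambda_p$, and control the tail correction with \Cref{5.9}. You are also right that the Riemann-sum bound should read $\lambda(m)-\lambda_p(m)\le p^{1-m}$, which is what the paper actually uses.

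\textbf{The gap in the $T_{p,d}$ half.} This gap is shared by the paper, whose proof only says ``in the same way''. Your hedge should be a definite statement: if $R'_{2,p,d},R'_{3,p,d}$ are the signed analogues of $R_{2,p,d},R_{3,p,d}$, which is what your Step 4 and the paper both produce, the asserted signs are false.
\begin{itemize}
\item Since $(-1)^r(\beta(m)-\beta_p(m))>0$, all $\rho_{d,i}\ge 0$ and $\rho_{d,0}=1$, the quantity $-R'_{3,p,d}$ has sign $(-1)^r$. So it is strictly negative whenever $p\equiv 3\pmod 4$, for every $d$.
\item For even $d$, $R_{1,d}(z)=\sum_{j\ge d}\rho_{d,j}z^{j-d}$ is strictly increasing. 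Then $R'_{2,p,d}$ is an alternating sum of its values whose last sign is $-$ when $p\equiv 1\pmod 4$, so it is negative.
\end{itemize}
Your fallback is correct: by the triangle inequality, $|R'_{2,p,d}|\le R_{2,p,d}\le p^2$ and $|R'_{3,p,d}|\le p(d/4)^{d/2}$. It also suffices for the large-$p$ argument. The numerator estimate there already uses absolute values, and the denominator estimate loses only a factor of about $2$ on a right-hand side of order $10^{-7}$. But the fallback does not prove the theorem as stated.

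\textbf{The monotonicity step for $R'_{2,p,d}$.} This argument fails exactly when $T_{p,d}$ is used, namely for odd $d$. We have $z^{-d}(z^d\rho_d-P_{d,d-1})=\sum_{j\ge d-1}\rho_{d,j}z^{j-d}$. For odd $d$ this contains $\rho_{d,d-1}z^{-1}$ with $\rho_{d,d-1}>0$, and that term is decreasing.

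\textbf{How to prove the statement literally.} The statement does not specify $R'_{2,p,d}$ and $R'_{3,p,d}$. So it suffices to prove $-p(d/4)^{d/2}\le T_{p,d}-M\le p^2$, with $M$ the main term, and then split $T_{p,d}-M$ into its positive and negative parts. This needs two observations missing from your proposal.
\begin{itemize}
\item Taylor part. The $z^{-1}$ part contributes $\rho_{d,d-1}\frac{2p}{\pi}\beta_p(1)\ge 0$. The rest, $R_{1,d}(z)-\rho_{d,d-1}z^{-1}$, is nonnegative and increasing, with value at most $R_{1,d}(\frac{\pi}{2})\le \sin(\frac{\pi}{2})^{-d}=1$, so its alternating sum lies in $[-1,1]$. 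Hence the Taylor part lies in $[-1,\,p(p-1)/2]$.
\item Tail part. We have $|\beta(m)-\beta_p(m)|\le p^{-m}$ and $\sum_i\rho_{d,i}(\frac{\pi}{2})^{i-d}=1$, so the tail part has absolute value at most $1$.
\end{itemize}
Together these give $-2\le T_{p,d}-M\le p(p-1)/2+1\le p^2$, which is the required bound.
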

\begin{proof}
We set
$$z^d\rho_d(z)=\sum_{0 \leq i \leq d-2}\rho_{d,i}z^i+R_{1,d}(z)z^d.$$
Then for $z \in (0,\frac{\pi}{2})$,
$$0 \leq R_{1,d}(z)\leq \frac{\pi}{2z}.$$
Taking the sum of $\rho_d(z)$ over all $z=\frac{i\pi}{2p}$, $1 \leq i \leq p-1$, $i$ odd, we get
$$S_{p,d}=\sum_{0 \leq i \leq d-2}\rho_{d,i}(\frac{\pi}{2})^{i-d}\lambda_p(d-i)p^{d-i}+\sum_{1 \leq i \leq p-1, i\textup{ odd}}R_{1,d}(\frac{i\pi}{2p}).$$
We set
$$R_{2,p,d}=\sum_{1 \leq i \leq p-1, i\textup{ odd}}R_{1,d}(\frac{i\pi}{2p}),$$
$$R_{3,p,d}=\sum_{0 \leq i \leq d-2}\rho_{d,i}(\frac{\pi}{2})^{i-d}(\lambda_p(d-i)-\lambda(d-i))p^{d-i}.$$
Then
$$0 \leq R_{2,p,d}\leq\sum_{1 \leq i \leq p-1, i\textup{ odd}}\frac{p}{i} \leq p^2$$
and
\begin{align*}
0 \leq -R_{3,p,d}\leq \sum_{0 \leq i \leq d-2}\rho_{d,i}(\frac{\pi}{2})^{i-d}p^{1-d+i}p^{d-i}\\
\leq p\sum_{0 \leq i \leq d-2, i \textup{ even}}(\frac{d}{4})^{i/2}(\frac{\pi}{2})^{i-d}\leq \frac{p(\frac{d}{4})^{(d-2)/2}(\frac{\pi}{2})^{-2}}{1-\frac{4}{d}\frac{4}{\pi^2}}\leq p(\frac{d}{4})^{d/2}.
\end{align*}
Finally we have
$$S_{p,d}=\sum_{0 \leq i \leq d-2}\rho_{d,i}(\frac{\pi}{2})^{i-d}\lambda(d-i)p^{d-i}+R_{2,p,d}+R_{3,p,d},$$
so we get the result. The estimate for $T_{p,d}$ can be proved in the same way.
\end{proof}
\begin{remark}
For fixed $d$, this gives another expression of $S_{p,d}$ and $T_{p,d}$ up to $O(p)$: when $d$ is even,
$$S_{p,d}=\sum_{0 \leq i \leq d-2}\rho_{d,i}(\frac{\pi}{2})^{i-d}\lambda(d-i)p^{d-i}+O(p)$$
and when $d$ is odd,
$$T_{p,d}=\sum_{0 \leq i \leq d-2}\rho_{d,i}(\frac{\pi}{2})^{i-d}\beta(d-i)p^{d-i}+O(p)$$
This result is compatible with \Cref{4.7} by the relationship between the Bernoulli number, Euler number, Dirichlet lambda function and Dirichlet beta function.
\end{remark}

We sketch a proof of \Cref{5.1} in the large $p$ case. We have
\begin{align*}
b_{p,d}-1=\frac{Ap^d+Bp^{d-2}+Cp^{d-4}+O(p^{d-5})}{p^d-Dp^{d-2}-Ep^{d-4}+O(p^{d-5})}\\
=\frac{A+Bp^{-2}+Cp^{-4}+O(p^{-5})}{1-Dp^{-2}-Ep^{-4}+O(p^{-5})}\\
=A+\frac{(B+AD)p^{-2}+(C+AE)p^{-4}+O(p^{-5})}{1-Dp^{-2}-Ep^{-4}+O(p^{-5})}
\end{align*}
for some $A,B,C,D,E$, and we have $D>0,B+AD>0$. Note that even if the coefficients depend on $d$, the condition $p\geq 2^{d/4}$ and \Cref{5.9} will guarantee that the error term is no more than $p^{-5}$. We will prove separately that
$$p \mapsto (B+AD)p^{-2}+(C+AE)p^{-4}+O(p^{-5})$$
is decreasing and
$$p \mapsto 1-Dp^{-2}-Ep^{-4}+O(p^{-5})$$
is increasing on the set of odd numbers.

Now we give a detailed proof under the above idea. Denote
$$c_{d,i}=\rho_{d,i}(\frac{\pi}{2})^{i-d}\lambda(d-i),i \leq d-2$$
such that
$$S_{p,d}=\sum_{0 \leq i \leq d-2}c_{d,i}p^{d-i}+R_{2,p,d}+R_{3,p,d}.$$
In particular, $c_{d,0}=(\frac{\pi}{2})^{-d}\lambda(d) \in (0,1)$ and $1-2c_{d,0} \in (-1,1)$ for $d \geq 16$, and $c_{d,i}=0$ for odd $i$. We can write
\begin{align*}
S_{p,d}-S_{p,d-2}-c_{d,0}(p^d-2S_{p,d-2}-1)\\
=\sum_{0 \leq i \leq d-2}c_{d,i}p^{d-i}+R_{2,p,d}+R_{3,p,d}-(\sum_{0 \leq i \leq d-4}c_{d-2,i}p^{d-2-i}+R_{2,p,d-2}+R_{3,p,d-2})\\
-c_{d,0}(p^d-2\sum_{0 \leq i \leq d-4}c_{d-2,i}p^{d-2-i}-2R_{2,p,d-2}-2R_{3,p,d-2}-1)\\
=\sum_{2 \leq i \leq d-2}(c_{d,i}-c_{d-2,i-2}+2c_{d,0}c_{d-2,i-2})p^{d-i}+c_{d,0}+R_{4,p,d}\\
=(\frac{\pi}{2})^{2-d}\lambda(d-2)(\frac{d}{6}-1+2(\frac{\pi}{2})^{-d}\lambda(d))p^{d-2}\\
+(\frac{\pi}{2})^{4-d}\lambda(d-4)(\frac{5d^2+2d}{360}-\frac{d}{6}
+\frac{d}{6}2(\frac{\pi}{2})^{-d}\lambda(d))p^{d-4}\\
+\sum_{6 \leq i \leq d-2}(c_{d,i}-c_{d-2,i-2}+2c_{d,0}c_{d-2,i-2})p^{d-i}+c_{d,0}+R_{4,p,d},
\end{align*}
where
$$R_{4,p,d}=R_{2,p,d}+R_{3,p,d}-R_{2,p,d-2}-R_{3,p,d-2}+2c_{d,0}R_{2,p,d-2}+2c_{d,0}R_{3,p,d-2}.$$
We will write $(\frac{\pi}{2})^d(\frac{1}{2}(b_{p,d}-1)-c_{d,0})=G(p,d)/H(p,d)$, and prove the monotonicity of $G$ and $H$ below.

\textbf{Estimate of the numerator.} We set
\begin{align*}
G(p,d)=(\frac{\pi}{2})^d\frac{1}{p^d}(S_{p,d}-S_{p,d-2}-c_{d,0}(p^d-2S_{p,d-2}-1))\\
=(\frac{\pi}{2})^2\lambda(d-2)(\frac{d}{6}-1+2(\frac{\pi}{2})^{-d}\lambda(d))p^{-2}\\
+(\frac{\pi}{2})^4\lambda(d-4)(\frac{5d^2+2d}{360}-\frac{d}{6}
+\frac{d}{6}2(\frac{\pi}{2})^{-d}\lambda(d))p^{-4}\\
+\sum_{6 \leq i \leq d-2}(\frac{\pi}{2})^d(c_{d,i}-c_{d-2,i-2}+2c_{d,0}c_{d-2,i-2})p^{-i}+(\frac{\pi}{2})^dp^{-d}c_{d,0}+(\frac{\pi}{2})^dp^{-d}R_{4,p,d}.  
\end{align*}
Now we claim $G(p,d)>G(p+2,d)$, or equivalently, $G(p,d)-G(p+2,d)>0$. We see when $d \geq 16$, the following coefficient
$$(\frac{\pi}{2})^4\lambda(d-4)(\frac{5d^2+2d}{360}-\frac{d}{6}
+\frac{d}{6}2(\frac{\pi}{2})^{-d}\lambda(d))$$
and the coefficient $(\frac{\pi}{2})^dc_{d,0}$ are positive. Moreover,
$$(c_{d,i}-c_{d-2,i-2}+2c_{d,0}c_{d-2,i-2})\geq -c_{d-2,i-2}.$$
Since $0<p^{-i}-(p+2)^{-i}\leq 2ip^{-i-1}$, we see
$$(c_{d,i}-c_{d-2,i-2}+2c_{d,0}c_{d-2,i-2})(p^{-i}-(p+2)^{-i})\geq -c_{d-2,i-2}2ip^{-i-1}.$$

Therefore, to prove $G(p,d)-G(p+2,d)>0$, it suffices to verify
\begin{align*}
(\frac{\pi}{2})^2\lambda(d-2)(\frac{d}{6}-1+2(\frac{\pi}{2})^{-d}\lambda(d))(p^{-2}-(p+2)^{-2})\\
\geq \sum_{6 \leq i \leq d-2}(\frac{\pi}{2})^dc_{d-2,i-2}2ip^{-i-1}+(\frac{\pi}{2})^dp^{-d}R_{4,p,d}+(\frac{\pi}{2})^d(p+2)^{-d}R_{4,p+2,d}. \tag{A7}
\end{align*}
Let $\alpha \in (0,1)$, we have
$$6\alpha^6+7\alpha^7+\ldots=\frac{6\alpha^6-5\alpha^7}{(1-\alpha)^2}\leq \frac{6\alpha^6}{(1-\alpha)^2}.$$
We have the following estimate:
$$(\frac{\pi}{2})^2\lambda(d-2)(\frac{d}{6}-1+2(\frac{\pi}{2})^{-d}\lambda(d))\geq (\frac{\pi}{2})^2\cdot \frac{5}{3}=\frac{5\pi^2}{12},$$
$$p^{-2}-(p+2)^{-2}=4(p+1)p^{-2}(p+2)^{-2}\geq 2p^{-3}.$$
This means the left side of (A7) is at least $\frac{5\pi^2}{6}p^{-3}$. We have
$$c_{d,i}=\rho_{d,i}(\frac{\pi}{2})^{i-d}\lambda(d-i)\leq (\frac{d}{4})^{\frac{i}{2}}(\frac{\pi}{2})^{i-d}\cdot 2,$$
\begin{align*}
\sum_{6 \leq i \leq d-2}(\frac{\pi}{2})^dc_{d-2,i-2}2ip^{-i-1}\leq  \sum_{6 \leq i \leq d-2}4(\frac{d}{4})^{\frac{i-2}{2}}(\frac{\pi}{2})^iip^{-i-1}\leq 4(\frac{d}{4})^2(\frac{\pi}{2})^6p^{-7}\frac{6}{(1-\frac{1}{p}\frac{\sqrt{d}}{2}\frac{\pi}{2})^2},
\end{align*}
\begin{align*}
|R_{4,p,d}|\leq |R_{2,p,d}|+|R_{3,p,d}|+|(1-2c_{d,0})R_{2,p,d-2}|+|(1-2c_{d,0})R_{3,p,d-2}|\\
\leq |R_{2,p,d}|+|R_{3,p,d}|+|R_{2,p,d-2}|+|R_{3,p,d-2}|
\leq 2p^2+2p(\frac{d}{4})^{\frac{d}{2}},
\end{align*}
\begin{align*}
(\frac{\pi}{2})^dp^{-d}|R_{4,p,d}|\leq 2p^{2-d}(\frac{\pi}{2})^d+2p^{1-d}(\frac{\pi}{2})^d(\frac{d}{4})^{\frac{d}{2}}.
\end{align*}
Since $(p+2)^{1-d}<p^{1-d}$, we also have
\begin{align*}
(\frac{\pi}{2})^d(p+2)^{-d}|R_{4,p+2,d}|\leq 2p^{2-d}(\frac{\pi}{2})^d+2p^{1-d}(\frac{\pi}{2})^d(\frac{d}{4})^{\frac{d}{2}},
\end{align*}
\begin{align*}
\frac{1}{p}\frac{\sqrt{d}}{2}\frac{\pi}{2}\leq \frac{1}{4},\frac{1}{(1-\frac{1}{p}\frac{\sqrt{d}}{2}\frac{\pi}{2})^2}\leq \frac{16}{9}<2. 
\end{align*}
The above inequalities imply the right side of (A7) is smaller than
$$8(\frac{d}{4})^2(\frac{\pi}{2})^6p^{-7}\cdot 6+4p^{2-d}(\frac{\pi}{2})^{d}+4p^{1-d}(\frac{\pi}{2})^d(\frac{d}{4})^{\frac{d}{2}}.$$
So to prove (A7), it suffices to prove
\begin{align*}
\frac{5\pi^2}{6}p^{-3}\geq 8(\frac{d}{4})^2(\frac{\pi}{2})^6p^{-7}\cdot 6+4p^{2-d}(\frac{\pi}{2})^{d}+4p^{1-d}(\frac{\pi}{2})^d(\frac{d}{4})^{\frac{d}{2}}.
\end{align*}
When $p \geq 2^{d/4}$, we have
$$\frac{d}{4}\leq \log_2p,(\frac{\pi}{2})^d\leq p^{4\log_2(\pi/2)},(\frac{d}{4})^{\frac{d}{2}}\leq p^{2\log_2(d/4)}.$$
Now it suffices to prove for $p \geq 2^{d/4}$, $d \geq 16$,

$$\frac{5\pi^2}{6}p^{-3}\geq 8(\log_2p)^2(\frac{\pi}{2})^6p^{-7}\cdot 6+4p^{2-d+4\log_2(\pi/2)}+4p^{1-d+4\log_2(\pi/2)+2\log_2(d/4)}.$$
Equivalently,
$$\frac{5\pi^2}{6}\geq 8(\log_2p)^2(\frac{\pi}{2})^6p^{-4}\cdot 6+4p^{5-d+4\log_2(\pi/2)}+4p^{4-d+4\log_2(\pi/2)+2\log_2(d/4)}.$$
The right side of this inequality is decreasing with respect to both $p,d \geq 16$, so it suffices to verify this inequality when $p,d=16$, which is the case since the right side is approximately 0.176. Therefore, we have proved that $p \mapsto G(p,d)$ is decreasing.

\textbf{Estimate of the denominator.} Set
\begin{align*}
H(p,d)=\frac{1}{p^d}(p^d-2S_{p,d-2}-1)\\
=1-2\sum_{0 \leq i \leq d-4}c_{d-2,i}p^{-2-i}-\frac{2}{p^d}R_{2,p,d-2}-\frac{2}{p^d}R_{3,p,d-2}-\frac{(-1)^{rd}}{p^d}
\end{align*}
and
$$H_1(p,d)=c_{d-2,0}p^{-2}+\frac{1}{p^d}R_{2,p,d-2}+\frac{1}{p^d}R_{3,p,d-2}+\frac{(-1)^{rd}}{2p^d}.$$
Here the term $(-1)^{rd}$ is always $1$ when $d$ is even. However, the same proof works for odd $d$ where we replace $S_{p,d}$ with $T_{p,d}$, so we will include this term here.

We claim $p \mapsto H(p,d)$ is increasing. We see all coefficients $c_{d-2,i}$ are positive and $c_{d-2,0}=(\frac{\pi}{2})^{2-d}\lambda(d-2)$, so to prove $H(p,d)<H(p+2,d)$, it suffices to prove $H_1(p,d)>H_1(p+2,d)$. Then, it suffices to prove
\begin{align*}
(\frac{\pi}{2})^{2-d}\lambda(d-2)(p^{-2}-(p+2)^{-2})\\
\geq \frac{1}{p^d}R_{2,p,d-2}+\frac{1}{p^d}R_{3,p,d-2}+\frac{1}{2p^d}-\frac{1}{(p+2)^d}R_{2,p+2,d-2}-\frac{1}{(p+2)^d}R_{3,p+2,d-2}+\frac{1}{2(p+2)^d}.
\end{align*}
We can use the same estimate above for $p^{-2}-(p+2)^{-2}$, $R_{2,p,d}\geq 0$, $R_{3,p,d} \leq 0$ when we prove $G(p,d)>G(p+2,d)$. So it suffices to verify
\begin{align*}
(\frac{\pi}{2})^{2-d}2p^{-3}
\geq p^{2-d}+p^{1-d}(\frac{d}{4})^{\frac{d}{2}}+\frac{1}{p^d}.
\end{align*}
Also,
$$(\frac{\pi}{2})^d\leq p^{4\log_2(\pi/2)},(\frac{d}{4})^{\frac{d}{2}}\leq p^{2\log_2(d/4)}.$$
So it suffices to verify
\begin{align*}
(\frac{\pi}{2})^{2}2p^{-3-4\log_2(\pi/2)}
\geq p^{2-d}+p^{1-d+2\log_2(d/4)}+p^{-d},
\end{align*}
which is equivalent to
\begin{align*}
2(\frac{\pi}{2})^{2}
\geq p^{5-d+4\log_2(\pi/2)}+p^{4-d+4\log_2(\pi/2)+2\log_2(d/4)}+p^{3+4\log_2(\pi/2)-d}.
\end{align*}
The right side of this inequality is decreasing with respect to both $p,d \geq 16$, so it suffices to verify this inequality when $p,d=16$, which is the case since the right side is approximately $3.20\cdot 10^{-7}$. Therefore, we have proved that $p \mapsto H(p,d)$ is increasing.

In sum, we have proved that in the large $p$ case, $p\mapsto G(p,d)$ is decreasing and $p \mapsto H(p,d)$ is increasing on the set of odd integers $p \geq 16$. Thus
$$p \mapsto b_{p,d}-1=2c_{d,0}+\frac{2(\frac{\pi}{2})^{-d}G(p,d)}{H(p,d)}$$
is decreasing on the same set. 

The case $d$ odd is proved similarly where we replace $\lambda$-function with $\beta$-function and use $1/2\leq \beta(d) \leq 1$ instead of $1 \leq \lambda(d)\leq 2$. We just need to prove two stronger inequalities
$$\frac{5\pi^2}{12}\geq 8(\log_2p)^2(\frac{\pi}{2})^6p^{-4}\cdot 6+4p^{5-d+4\log_2(\pi/2)}+4p^{4-d+4\log_2(\pi/2)+2\log_2(d/4)}$$
and
\begin{align*}
(\frac{\pi}{2})^{2}
\geq p^{5-d+4\log_2(\pi/2)}+p^{4-d+4\log_2(\pi/2)+2\log_2(d/4)}+p^{3+4\log_2(\pi/2)-d},
\end{align*}
which still hold in this case. So the large $p$ case for even $d$ is proved, which concludes the proof of \Cref{5.1}.

\section*{Acknowledgement}

The author would like to thank Ilya Smirnov for helpful suggestions and showing the computational results in subsection 5.1. The author would like to thank Joel Castillo-Rey and Nick Cox-Steib for helpful suggestions.

\bibliographystyle{plain}
\bibliography{reference}

\end{document}